\DeclareMathAlphabet{\pazocal}{OMS}{zplm}{m}{n}
\newtheorem{theorem}{Theorem}[section]
\newtheorem{corollary}[theorem]{Corollary}
\newtheorem{lemma}[theorem]{Lemma} 
\newtheorem{proposition}[theorem]{Proposition}
\newtheorem*{theoremA}{Theorem A}
\newtheorem*{theoremB}{Theorem B}
\theoremstyle{definition}
\newtheorem{definition}[theorem]{Definition}
\newtheorem{remark}[theorem]{Remark}
\newtheorem{construction}[theorem]{Construction}
\newtheorem{example}[theorem]{Example}
\newcommand{\G}{\mathbb{G}}
\newcommand{\End}{\mathrm{End}}
\newcommand{\Gal}{\mathrm{Gal}}
\newcommand{\Pic}{\mathrm{Pic}}
\newcommand{\Picnalg}{\mathrm{Pic}_n^{alg}}
\newcommand{\DD}{\mathbb{D}}
\newcommand{\FF}{\mathbb{F}}
\newcommand{\GG}{\mathbb{G}}
\newcommand{\QQ}{\mathbb{Q}}
\renewcommand{\SS}{\mathbb{S}}
\newcommand{\WW}{\mathbb{W}}
\DeclareMathOperator{\Aut}{Aut}
\DeclareMathOperator{\colim}{colim}
\DeclareMathOperator{\Ext}{Ext}
\DeclareMathOperator{\Hom}{Hom}
\DeclareMathOperator{\Sp}{\pazocal{S}p}
\newcommand{\ZZ}{\mathbb{Z}}
\newcommand{\pic}{\mathfrak{pic}}
\newcommand{\Kn}{\mathrm{K(n)}}
\newcommand{\En}{\mathrm{E_n}}
\newcommand{\E}{\mathbf{E}}
\newcommand{\barE}{\overline{\E}}
\newcommand{\barR}{\overline{R}}
\newcommand{\EnG}[1]{\mathrm{E}^{h{#1}}_n} 
\newcommand{\SKn}{\mathrm{S}^0_\Kn}
\newcommand{\Gn}{\GG_n}
\newcommand{\gr}{\mathrm{gr}}
\newcommand{\Einf}{\pazocal{E}_\infty}
\newcommand{\Ecomm}[1]{\pazocal{E}_{#1}}
\newcommand{\tH}{\widehat{H}}
\newcommand{\kappak}[1]{\kappa_{n,#1}} 
\newcommand{\kappaNk}[2]{\kappa_{n,#2}^{#1}} 
\newcommand{\kappaN}[1]{\kappa_{n}^{#1}} 
\newcommand{\Gring}[1]{\ZZ_p[\![#1]\!]} 
\newcommand{\xpic}{x_{\varspade}} 
\newcommand{\ypic}{y_{\varspade}}
\newcommand{\zpic}{z_{\varspade}}
\DeclareSymbolFont{extraup}{U}{zavm}{m}{n}
\DeclareMathSymbol{\varspade}{\mathalpha}{extraup}{85}
\title{Bounding the $K(p-1)$-local exotic Picard group at $p>3$}
\author[Bobkova]{Irina Bobkova}
\address{Department of Mathematics, Texas A\&M University, College Station, TX, 77843, USA}
\email{ibobkova@tamu.edu}
\author[Lachmann]{Andrea Lachmann}
\address{Bergische Universit\"at Wuppertal, Fachgruppe Mathematik und Informatik, Gau{\ss}stra{\ss}e 20, 42119 Wuppertal, Germany}
\email{lachmann@uni-wuppertal.de}
\author[Li]{Ang Li}
\address{Mathematics Department, University of California, Santa Cruz
1156 High Street, Santa Cruz, CA 95064}
\email{ali169@ucsc.edu}
\author[Lima]{Alicia Lima}
\address{Department of Mathematics, The University of Chicago, Eckhart Hall, 5734 S University Ave, Chicago, IL 60637}
\email{alima@uchicago.edu}
\author[Stojanoska]{Vesna Stojanoska}
\address{Department of Mathematics, University of Illinois, Urbana-Champaign, 273 Altgeld Hall
1409 W. Green Street, 
Urbana, IL 61801}
\email{vesna@illinois.edu}
\author[Zhang]{Adela YiYu Zhang}
\address{Department of Mathematical Sciences,
University of Copenhagen,
Universitetsparken 5, 2100 Copenhagen, Denmark}
\email{yz@math.ku.dk}
\date{}
\begin{document}
\maketitle
\begin{abstract}
   In this paper, we bound the descent filtration of the exotic Picard group $\kappa_n$, for a prime number $p>3$ and $n=p-1$. Our method involves a detailed comparison of the Picard spectral sequence, the homotopy fixed point spectral sequence, and an auxiliary $\beta$-inverted homotopy fixed point spectral sequence whose input is the Farrell-Tate cohomology of the Morava stabilizer group. Along the way, we deduce that the $\Kn$-local Adams-Novikov spectral sequence for the sphere has a horizontal vanishing line at $3n^2+1$ on the $E_{2n^2+2}$-page.

   The same analysis also allows us to express the exotic Picard group of $\Kn$-local modules over the homotopy fixed points spectrum $\mathrm{E}_n^{hN}$, where $N$ is the normalizer in $\Gn$ of a finite cyclic subgroup of order $p$, as a subquotient of a single continuous cohomology group $H^{2n+1}(N,\pi_{2n}\En)$. 
\end{abstract}

\setcounter{tocdepth}{1}
\tableofcontents

\section{Introduction}
A key objective of chromatic homotopy theory is to understand the $\Kn$-local category of spectra $\Sp_{\Kn}$, at various primes $p$ and heights $n$, as these are building blocks of spectra from the chromatic point of view. 
Furthermore, $\Kn$-local spectra are approachable via descent from their Lubin-Tate  homology (aka Morava $E$-theory). 
Specifically, there is a $\Kn$-local even-periodic $\Einf$-ring spectrum $\En$, whose $\pi_0$ is a complete local ring carrying a universal deformation of a height $n$ formal group law $\Gamma$ over a field $k$ of characteristic $p$ \cite{GoerssHopkins, Lurie-Elliptic}.

The automorphism group $\Gn$ of the pair $(\Gamma,k)$ 
acts on $\En$ through ring homomorphisms, and the homotopy fixed points of the action recover the $\Kn$-local sphere \cite{devinatzhopkins}. In fact, this relationship can be categorified to great effect, exhibiting the $\Kn$-local category as the homotopy fixed points of the category of $\Gn$-equivariant $\Kn$-local $\En$-modules \cite{mathew2016galois,mor}.

The invertible objects in $\Sp_{\Kn}$, in turn, can be thought of as the building blocks of the $\Kn$-local category. Equipped with the $\Kn$-local smash product, they form the Picard group
\[ \Pic_n = \Pic(\Sp_{\Kn})= \{X \in \Sp_{\Kn} |\; \exists Y\ \text{ such that } X \otimes Y \simeq \SKn\}/\sim, \] where the equivalence relation is $\Kn$-local homotopy equivalence.

The investigation into $\Pic_n$ was initiated by the groundbreaking work of Hopkins, first recounted by Strickland \cite{Strikland-p-adic}, and then expanded by Hopkins, Mahowald, and Sadofsky \cite{hopkins-mahowald-sadovsky}. These works observe the wealth of information contained in $\Pic_n$. Indeed, while the category of (unlocalized) spectra has Picard group $\ZZ$, consisting of only the integer-dimensional spheres, the structure of $\Pic_n$ encodes the existence of $p$-adic dimensional spheres \cite{hopkins-mahowald-sadovsky} and determinant spheres \cite{BBGS}, among other beasts.
A starting point in the study of invertible $\Kn$-local spectra is their characterization as such spectra $X$ for which $(E_n)_*X$ is invertible in the category of $\pi_*\En-\GG_n$ modules (i.e. modules over $\pi_*\En$ with compatible $\GG_n$ action; these are known as Morava modules \cite[Definition 3.37]{BB}). We let $\Picnalg$ denote the Picard group of Morava modules,
and we consider the comparison map
\begin{equation}\label{eq:comparePic}
    \varepsilon: \Pic_n \to \Picnalg
\end{equation}
which sends a spectrum $X$ to $(\En)_*X=\pi_*L_{K(n)}(\En\wedge X)$.
Since $\pi_0\En$ is a complete local ring, an invertible $\pi_*\En$-module is completely determined by whether it is concentrated in even or odd degrees. Thus, most of the information in the algebraic Picard group is encoded by twists of the $\Gn$-action on $\pi_*\En$.

The map in \eqref{eq:comparePic} is known to be an isomorphism when $2(p-1)>n^2+n$ \cite{Pstragowski}. In case $p-1>n^2+n$, this reflects the more subtle notion of algebraicity of the $\Kn$-local category \cite{Pstragowski-KnAlg}. For small primes, $\varepsilon$ is not injective, and its kernel $\kappa_n$ is the group of \emph{exotic} invertible $\Kn$-local spectra. The few results that are known about the non-trivial exotic Picard groups can be summarized as follows:
\begin{itemize}
    \item At height $n=1$, $\kappa_1$ is non-zero only if $p=2$, in which case it is $\ZZ/2$ \cite{hopkins-mahowald-sadovsky}; 
    \item At height $n=2$, $\kappa_2$ is non-trivial only in two cases:
    \begin{itemize}
        \item When $p=3$, $\kappa_2 \cong \ZZ/3 \times \ZZ/3$ \cite{ghmrpicard}, and
        \item When $p=2$, $\kappa_2 \cong (\ZZ/8)^2 \times (\ZZ/2)^3$ \cite{BBGHPS};
    \end{itemize}
    \item For all $p$ and $n=p-1$, $\kappa_n$ contains a non-trivial subgroup of order $p$ \cite[Theorem 14.6]{BGHS-duality};
    \item At the prime $2$, $\kappa_n$ contains a cyclic subgroup of order $n+2 -v_2(2n+3+(-1)^n) $, where $v_2(-)$ is the $2$-adic valuation \cite[Theorem 6.5]{HeardLiShi};
    
    \item For odd primes $p$, $\kappa_n$ is a product of cyclic $p$-groups, according to \cite[Theorem 4.4.1]{heard2014thesis};
    
    \item Conditional upon a homological conjecture, $\kappa_3 $ is trivial at the prime $5$ \cite[Theorem 3.32]{culver}.
   \end{itemize}

In this paper we will further explore $\kappa_n$ and related groups when $n=p-1$. This is a boundary case for complication: the Morava stabilizer group at $n=p-1$ has infinite cohomological dimension because it contains finite torsion subgroups, but this is minimally complicated as the order of any finite $p$-torsion subgroup is exactly $p$. 

The analysis of $\kappa_2$ at $p=2$ in \cite{BBGHPS} contains a compendium of almost all the known strategies for approaching $\kappa_n$, and in particular, it made clear that understanding two natural filtrations on $\kappa_n$ can be crucial for clarifying the structure of this group. In this paper we study the descent filtration on $\kappa_n$ (cf. \cite[Definition 3.28]{BBGHPS}, \Cref{def:filterKappaGk}). It is most naturally described as the filtration arising from the descent spectral sequence for the Picard spectrum of the $\Kn$-local category. We review this in more detail in  \Cref{sec:Pic}.

To approach this descent spectral sequence, we first dive into a close study of the $\Kn$-local Adams-Novikov spectral sequence for the sphere, i.e. the homotopy fixed point spectral sequence\footnote{Here, and everywhere in this paper, group cohomology of profinite groups is always taken continuously.}
\begin{equation}\label{eq:hfp}
    H^s(\Gn,\pi_t\En) \Rightarrow \pi_{t-s} \SKn.
\end{equation}
While the group cohomology $H^*(\Gn,\pi_*\En)$ is generally inaccessible, it is well-understood in high cohomological degrees, at least in the case $n=p-1$. Namely,  above the $p$-virtual cohomological dimension of $\Gn$, which is $n^2$, $H^s(\Gn,\pi_*\En)$ is isomorphic to the  Farrell-Tate cohomology $\tH^s(\Gn,\pi_*\En)$ explicitly computed by Symonds \cite{Symonds} for $n=p-1$, which is the height we focus on. One key computational feature of Farrell-Tate cohomology is that for profinite groups of finite virtual cohomological dimension, whose finite $p$-Sylow subgroups are cyclic, their Farrell-Tate cohomology reduces to the Farrell-Tate cohomology of the normalizers of cyclic subgroups of order $p$ \cite[Theorem 1.3]{Symonds}. This result is a strengthening of Henn's f-isomorphism theorem \cite[Theorem 1.4]{henncentralizers}.

In the case of $\Gn$ at height $n=p-1$, any non-trivial finite $p$-subgroup of $\Gn$ is isomorphic to the cyclic group $C_p$, and its normalizer $N$ is such that the quotient $N/C_p$ has virtual cohomological dimension $n$. In fact, $N/C_p$ is an extension of $\ZZ_p^n$ and a finite group of order prime to $p$, making its Farrell-Tate cohomology simple enough to compute. Then it is a result of Symonds \cite[Theorem 1.1]{Symonds} that there is an isomorphism 
\begin{equation}\label{eq:FarrellTateintro}
\tH^*(\Gn,\pi_*\En) \cong \tH^*(N,\pi_*\En) \cong \tH^*(F,\pi_*\En)\otimes  \Lambda_{\FF_p}(a_0,\dots, a_{n-1}),
\end{equation}
where $F$ is a maximal finite subgroup containing our $C_p$. The Tate cohomology of $F$ with coefficients in $\pi_*\En$ can be explicitly determined from a well-known computation due to Hopkins and Miller; see \cite{Nave}. The exterior generators arise from the cohomology of $N/C_p$. We review these results in \Cref{sec:Tate} below.

The Farrell-Tate cohomology in \eqref{eq:FarrellTateintro} is in fact the $E_2$-page of the $\beta$-localized homotopy fixed point spectral sequence \eqref{eq:hfp}, where $\beta$ is a cohomology class in $H^{2}(\GG,\pi_{2pn}\En)$ detecting the class $\beta_1\in \pi_{2pn-2}\SKn$. There is a similar $\beta$-inverted homotopy fixed point spectral sequence for a finite subgroup $F$ of $\GG$ as well as for the normalizer $N$. Their relationship and full computation is described in \Cref{cor: tate splits}. 

While the $\beta$-inverted spectral sequences converge to zero, they encode crucial information (in high enough cohomological degree) about the homotopy fixed point spectral sequence \eqref{eq:hfp}, as well as the analogous homotopy fixed point spectral sequence for the action of $N$ on $\En$. A detailed analysis of this information yields the following explicit horizontal vanishing line, which we prove in \Cref{sec:boundFiltration}.

\begin{theoremA}[\Cref{cor: vanishing line}]
   Let $n=p-1$ for the prime $p\geq 3$, and let $G$ be $N$ or $\Gn$. There is a horizontal vanishing line $s=2n^2+\mathrm{vcd}(G)+1$ on the $E_{2n^2+2}$-page of the homotopy fixed points spectral sequence 
   \[E^{s,t}_2= H^{s}(G,\pi_t\En)\Rightarrow\pi_{t-s}(\EnG{G}).\]
\end{theoremA}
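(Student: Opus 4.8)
The plan is to deduce the vanishing line from the complete description of the $\beta$-inverted homotopy fixed point spectral sequences furnished by \Cref{cor: tate splits}, together with the comparison map to the genuine spectral sequence \eqref{eq:hfp}. The essential input is that, for each of $F$, $N$, and $\Gn$, the $\beta$-inverted spectral sequence converges to zero and has maximal nonzero differential of length exactly $2n^2+1$. Because the exterior classes $a_0,\dots,a_{n-1}$ are permanent cycles and $\tH^*(\Gn,\pi_*\En)\cong\tH^*(N,\pi_*\En)$ by \eqref{eq:FarrellTateintro}, the differential pattern---and in particular the top differential $d_{2n^2+1}$---is the same in all three cases, and this is precisely what fixes the page $E_{2n^2+2}$ on which the vanishing line appears. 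First I would record that the localization map $H^*(G,\pi_*\En)\to\tH^*(G,\pi_*\En)$ is an isomorphism for $s>\mathrm{vcd}(G)$, so that above the virtual cohomological dimension the $E_2$-page of \eqref{eq:hfp} agrees with that of the $\beta$-inverted spectral sequence, the two being compared by a map of spectral sequences.

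The heart of the argument is then a filtration count. Fix a class $x\in E_2^{s,t}=H^s(G,\pi_t\En)$ with $s\geq 2n^2+\mathrm{vcd}(G)+1$; such a class lies above the vcd and is therefore identified with an element of $\tH^s(G,\pi_*\En)$. By \Cref{cor: tate splits} it is killed in the $\beta$-inverted spectral sequence by some differential $d_r$ with $r\leq 2n^2+1$, either as a source or as a target. In either case the other end of this differential sits in cohomological degree at least $s-(2n^2+1)\geq\mathrm{vcd}(G)$, hence again in the Farrell-Tate range where the localization map is an isomorphism. I would then transfer the differential to \eqref{eq:hfp}: since $\beta\in H^2(\GG,\pi_{2pn}\En)$ is a permanent cycle, every differential is $\beta$-linear, and a $\beta$-torsion-free Farrell-Tate class cannot be disrupted by differentials emanating from the classes below the vcd, which are necessarily $\beta$-power-torsion. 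Consequently $x$ supports or receives the same nonzero differential in \eqref{eq:hfp}, so it does not survive to $E_{2n^2+2}$. As $x$ was arbitrary above the line, this yields $E_{2n^2+2}^{s,t}=0$ for $s\geq 2n^2+\mathrm{vcd}(G)+1$.

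The main obstacle is the comparison across the boundary $s=\mathrm{vcd}(G)$. The two spectral sequences agree only strictly above the vcd, and the genuine spectral sequence carries additional differentials originating below the vcd that are invisible $\beta$-locally; I must rule out that such a boundary-crossing differential prematurely kills the source of the top differential $d_{2n^2+1}$ needed to annihilate a class above the line. The observation that makes this work is that $\beta$-linearity forces any such disruption, after multiplication by a large power of $\beta$, entirely into the Farrell-Tate range, where it would contradict the established differential pattern of \Cref{cor: tate splits}; combined with the $\beta$-torsion-freeness of $\tH^*$ above the vcd, this shows that the relevant sources are never stolen. A secondary subtlety is the edge case $s=\mathrm{vcd}(G)$, where the localization map need not be an isomorphism and the argument must be supplemented by explicit knowledge of where the sources of $d_{2n^2+1}$ live. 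I expect the genuinely hard content to be upstream, in pinning the maximal differential length to exactly $2n^2+1$---the quantity $2n^2$ that appears in the vanishing line---which rests on the full $\beta$-inverted computation and ultimately on the Hopkins-Miller-Nave analysis of $\tH^*(F,\pi_*\En)$.
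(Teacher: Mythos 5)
Your overall strategy is the paper's: compare the genuine homotopy fixed point spectral sequence with the $\beta$-inverted one via the localization map, use that the latter collapses to zero at $E_{2n^2+2}$ with longest differential $d_{2n^2+1}$, and observe that for $s\geq 2n^2+\mathrm{vcd}(G)+1$ the source of any differential hitting such a class lies in degree $\geq \mathrm{vcd}(G)$, where the comparison is still surjective. The numerology is exactly right. However, the step where you transfer differentials back to the genuine spectral sequence contains a genuine gap. You assert that ``classes below the vcd\dots are necessarily $\beta$-power-torsion,'' which is false: the unit in $H^0(G,\E_0)$, the class $\beta$ itself, and many $\alpha\beta^k\Delta^{pl}$-type classes live below the vcd and map to nonzero elements of $\tH^*(G,\E_*)$, hence are not $\beta$-power-torsion. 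Only the kernel of the localization map consists of $\beta$-power-torsion classes. Moreover, the appeal to ``multiplication by a large power of $\beta$'' does not make sense on the later pages of the \emph{genuine} spectral sequence, which are not $\beta$-periodic modules; multiplying a class by $\beta^N$ there can simply annihilate it, so the purported contradiction with the $\beta$-inverted differential pattern does not materialize. You also flag but do not resolve the edge case $s=\mathrm{vcd}(G)$, where $\varphi$ is onto but not injective.

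What actually closes this gap in the paper is not a $\beta$-linearity argument but a purely formal bookkeeping lemma (\Cref{lem:comparezerodiff}): if a map of spectral sequences $\varphi_r$ is onto for $s\geq M_0$ and an isomorphism for $s\geq M_1$, then $\varphi_{r+1}$ is onto for $s\geq\max(M_0,M_1-r)$ and an isomorphism for $s\geq\max(M_1,M_0+r)$, proved by two applications of the snake lemma to the cycle/boundary exact sequences. Starting from $M_0=d=\mathrm{vcd}(G)$, $M_1=d+1$ on $E_2$ and inducting, one finds $\varphi_r$ is onto for $s\geq d$ and an isomorphism for $s\geq d+r-1$; at $r=2n^2+2$ the target vanishes, giving $E_{2n^2+2}^{s,*}=0$ for $s\geq d+2n^2+1$. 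This lemma is precisely the device that quantifies how the isomorphism range degrades by one degree per page as boundary-crossing differentials from below the vcd (invisible $\beta$-locally) accumulate, and it is the ingredient your argument is missing. Without it, the claim that a class above the line ``supports or receives the same nonzero differential'' in the genuine spectral sequence is unjustified.
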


    Of course, nilpotence technology ensures the existence of a horizontal vanishing line on some finite page of the homotopy fixed point spectral sequence for $\EnG{G}$ for any closed subgroup $G\subseteq \Gn$, cf. \cite[Lemma 5.11]{devinatzhopkins} and \cite[Corollary 2.3.10]{beaudry2022chromatic}. Nonetheless, there are few examples where the exact bound is known, and our result contributes another class of such examples.

    \begin{remark}
    When $p=3$, we recover the horizontal vanishing line (at $s=13$ on the  $E_{10}$-page) from \cite[Theorem 4.2]{ghmrpicard}.
    Furthermore, in op.cit, the authors demonstrated that the line is sharp by using computations from \cite{HennKaramanovMahowald} to find elements in $E^{12,96+72k}_{10}$ that are detected by the Adams-Novikov spectral sequence of the Smith-Toda complex $V(0)$, see the paragraph after \cite[Lemma 4.7]{ghmrpicard}. 
    Unfortunately, our analysis does not prove sharpness for $p> 3$, as we do not have fine enough information about the fate of the Tate cohomology classes that could interact with the $\beta$-torsion in the homotopy fixed point spectral sequence for the sphere \eqref{eq:hfp} or  a suitable generalized Smith-Toda complex.%
    \end{remark}

    \begin{remark}
         In comparison, if $p-1$ does not divide the height $n$, then there is a horizontal vanishing line $s=n^2+1$ on the $E_2$-page. This is because $H^*(\Gn, \pi_*\En) $ is isomorphic to the Galois fixed points of the cohomology of the small Morava stabilizer group $\mathbb{S}_n$, which has no $p$-torsion and thus its cohomological dimension is $n^2$; see for example (\cite[Theorem 3.2.1]{henncentralizers} and \cite[Proposition 1.13]{goerss2021comparing}).
    \end{remark}

The complete understanding of the homotopy fixed point spectral sequence \eqref{eq:hfp} in high enough degrees also yields useful information for the Picard group $\Pic_n$ of the $\Kn$-local category. Namely, we use the additive-to-Picard comparison of differentials from \cite{ms} (see \Cref{thm: comparison picard hfpss}) to deduce differentials in the Picard spectrum homotopy fixed point spectral sequence. Note that this is made possible in the profinite case via the recent descent results of Mor \cite{mor}. 

To wit, consider the Picard spectrum $\pic(\En)$ of the category of $\Kn$-local $\En$-modules. It has a natural $\Gn$-action, and for a closed subgroup $G$ of $\Gn$ such as $N$ when $n=p-1$ or all of $\Gn$, we have a spectral sequence
\begin{equation}
    H^s(G,\pi_t\pic(\En)) \Rightarrow \pi_{t-s}\left(\pic(\En)\right)^{hG}.
\end{equation}
Here, $\pi_0$ of the abutment is the Picard group of $\Kn$-local $\EnG{G}$-modules. Thus, the spectral sequence induces a filtration on this Picard group, and the subgroup of filtration $s\geq 2$ is the group $\kappaN{G}$ of exotic invertible $\Kn$-local $\EnG{G}$-modules; see \Cref{def:filterKappaGk}. This is in agreement with $\kappa_n = \kappaN{\Gn}$ being the kernel of the map $\varepsilon$ in \eqref{eq:comparePic}, since $H^0(G,\pi_0\pic(\En)) = \ZZ/2$ and $H^1(G,\pi_1(\pic(\En))) \cong H^1(G,(\pi_0\En)^\times)$ conspire to build the algebraic Picard group of $G$-equivariant $\pi_*\En$-modules.
 In particular, $\kappaN{G}$ itself inherits a filtration, called the descent filtration, and we deduce the following bound on its size. Note that part (1) in the following result comes from knowing that there is only one descent filtration jump in the case of the normalizer subgroup $N$.

\begin{theoremB}[\Cref{thm: main corollary}]\label{thm: main}
Let $p\geq 5$ be a prime and let $n=p-1$.
\begin{enumerate}
\item Let $N$ be the normalizer of $C_p\subset\GG_n$. The exotic Picard group of $\Kn$-local $\EnG{N}$-modules is a subquotient of $H^{2n+1}(N,\pi_t\En)$.  In particular, $\kappaN{N}$ is a finite group of simple $p$-torsion.
    \item The descent filtration on the exotic Picard group $\kappa_n$ has length at most $n^2$, and the associated graded is concentrated in  $\frac{n}{2}-1$ degrees.
\end{enumerate}
\end{theoremB}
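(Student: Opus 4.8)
The plan is to transport the high-degree description of the additive homotopy fixed point spectral sequence obtained in \Cref{cor: vanishing line} across to the Picard spectral sequence $H^s(G,\pi_t\pic(\En))\Rightarrow \pi_{t-s}\pic(\En)^{hG}$, separately for $G=N$ and $G=\Gn$, and then read off $\kappa_n^{G}$ from its diagonal. By \Cref{def:filterKappaGk} the descent filtration on the exotic Picard group has associated graded $\gr^s$ a subquotient of the Picard $E_\infty^{s,s}$-term, so the entire problem reduces to identifying which classes on the line $t=s$, $s\geq 2$, survive to $E_\infty$. The first reduction is formal: since $\pi_s\pic(\En)\cong\pi_{s-1}\En$ for $s\geq 2$, and this vanishes for $s$ even by the even-periodicity of $\En$, only odd $s$ can contribute, and each contributing term has $E_2$-page $H^s(G,\pi_{s-1}\En)$, which is exactly the additive term $E_2^{s,s-1}$ on the stem $t-s=-1$.

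Next I would invoke the additive-to-Picard comparison of \cite{ms} (\Cref{thm: comparison picard hfpss}), available in the profinite setting by the descent results of \cite{mor}: the Picard differential leaving $E_r^{s,s}$ coincides with the additive differential leaving $E_r^{s,s-1}$ for $r<s$, while the critical differential $d_s$ differs from its additive analogue only by a correction term quadratic in strictly lower-filtration data. Consequently $\gr^s\kappa_n^{G}$ is a subquotient of the additive $E_\infty^{s,s-1}$-term, and the task becomes the explicit determination of the line $t-s=-1$ in the additive spectral sequence. Here I would feed in the full computation assembled earlier, namely the Farrell--Tate identification \eqref{eq:FarrellTateintro}, the $\beta$-inverted spectral sequence of \Cref{cor: tate splits}, and the vanishing line of \Cref{cor: vanishing line}. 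Above the relevant virtual cohomological dimension the additive $E_2$-page is Farrell--Tate cohomology, which is annihilated by $p$ and $\beta$-periodic, so the surviving classes along $t-s=-1$ are pinned down by the $\beta$-inverted differentials together with the vanishing line.

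For part (1), take $G=N$, for which $\mathrm{vcd}(N)=n$ and $\tH^*(N,\pi_*\En)\cong\tH^*(F,\pi_*\En)\otimes\Lambda_{\FF_p}(a_0,\dots,a_{n-1})$. Every term on the line $t-s=-1$ with $s>n$ is then Farrell--Tate cohomology; tracking the $\beta$-inverted differentials along this line, and using that the vanishing line for $N$ sits at $s=2n^2+n+1$, I expect exactly one diagonal term to survive, at $s=2n+1$. This is precisely the single descent-filtration jump, exhibiting $\kappa_n^{N}$ as a subquotient of $H^{2n+1}(N,\pi_{2n}\En)$; since the latter group is annihilated by $p$, we conclude that $\kappa_n^{N}$ is of simple $p$-torsion, and finiteness then follows from there being only one contributing degree.

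For part (2), take $G=\Gn$, where $\mathrm{vcd}(\Gn)=n^2$ and the vanishing line sits at $s=3n^2+1$. Both assertions---that the descent filtration on $\kappa_n$ has length at most $n^2$ and that its associated graded is supported in only $\tfrac{n}{2}-1$ degrees---would follow from the explicit determination of the surviving classes on the line $t-s=-1$: the vanishing line attained on page $E_{2n^2+2}$, together with the odd-filtration constraint, bounds the span of contributing filtration degrees, while the exterior algebra $\Lambda_{\FF_p}(a_0,\dots,a_{n-1})$ of \eqref{eq:FarrellTateintro} restricts, through the internal degrees of its generators, which monomials can simultaneously lie on $t-s=-1$ and survive the $\beta$-inverted differentials, and counting these should produce $\tfrac{n}{2}-1$ contributing degrees. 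The main obstacle throughout is exactly this precise identification of survivors. Because the $\beta$-inverted spectral sequence only records Farrell--Tate cohomology and converges to zero, I must separately control the genuine (non-Tate) classes below the virtual cohomological dimension and their interaction with the quadratic correction in the critical Picard differential $d_s$, and rule out that hidden differentials or nontrivial extensions perturb the combinatorial count.
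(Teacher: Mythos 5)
Your overall architecture matches the paper's: compare the Picard spectral sequence to the additive homotopy fixed point spectral sequence via \Cref{thm: comparison picard hfpss}, and compare that in turn to the $\beta$-inverted spectral sequence, whose collapse (\Cref{cor: tate splits}) forces diagonal classes above the virtual cohomological dimension to die. But the two places where the actual work happens are left as gaps, and in both you point at the wrong mechanism. First, the bound of $n^2$ on the filtration length for $G=\GG$ does \emph{not} follow from the vanishing line at $s=3n^2+1$; that line only kills filtrations above $3n^2$. What the paper actually proves (\Cref{thm: hfpsstopicard}) is that \emph{every} diagonal permanent cycle in filtration $s>\mathrm{vcd}(G)+1$ is the target of a differential imported from the $\beta$-inverted spectral sequence. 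Making that importation work is the heart of the argument: one needs \Cref{lem:comparezerodiff} to control, page by page, the range in which $\varphi_r$ is surjective and injective (so that a differential hitting $\varphi(x)$ lifts to one hitting $x$), and one needs the source of that differential to have topological degree at least the page number so that \Cref{thm: comparison picard hfpss} transports it to the Picard spectral sequence \emph{without} any quadratic correction. The only differentials available are $d_{2n+1}$ and $d_{2n^2+1}$ (\Cref{rem: all differentials}), and the $d_{2n^2+1}$ case is exactly where the degree condition could fail; this is why part (2) of \Cref{prop: degconsideration in tate} is needed, to show that any diagonal class hit by a $d_{2n^2+1}$ must have $t>4n^2$, putting its source safely in the comparison range. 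You flag "the quadratic correction in the critical Picard differential" as an obstacle, but the paper's route is to arrange degrees so that the corrected differential is never invoked.

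Second, the count of $n/2-1$ contributing degrees does not come from parity of $s$ or from the internal degrees of the exterior generators $a_i$, and it cannot come from an explicit determination of survivors on $t-s=-1$ below the virtual cohomological dimension (which the paper makes no attempt at, and which is out of reach). It comes from the sparsity result \Cref{prop:sparse}: since $\mu_{p-1}$ is central in $\GG$, the groups $H^s(\GG,\E_t)$ vanish unless $2n\mid t$, so the descent filtration quotients of $\kappa_n$ are concentrated in degrees $s\equiv 1\pmod{2n}$ (\Cref{lem: filtration sparse}) on the \emph{entire} $E_2$-page, not just in the Farrell--Tate range. Combining $s=2nm+1$ with the bound $s\le n^2$ from \Cref{thm: hfpsstopicard}(2) gives $m\le n/2-1$, which is the count. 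Similarly for $G=N$ the single surviving degree $s=2n+1$ is singled out because \Cref{prop: degconsideration in tate}(1) forces $t=2n\epsilon+2pnl$, so the first diagonal entry above $\mathrm{vcd}(N)=n$ is at $t=2n$ and the next possible one is already at $t\ge 2pn$, where the killing argument applies. The simple $p$-torsion claim in part (1) then follows because $H^{2n+1}(N,\E_{2n})\cong\tH^{2n+1}(N,\E_{2n})$ is an $\FF_p$-vector space by \Cref{symonds}, finite-dimensional by the explicit description there; your proposal asserts this but does not tie it to the Farrell--Tate identification in degrees above $\mathrm{vcd}(N)$.
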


\subsection*{Conventions}
Throughout this paper, we fix an odd prime number $p$ and a height $n$. To avoid clunky notation, we will omit the subscript $n$ from $\En$, denoting it by $\E = \En$, and then $\E_t$ will denote $\pi_t\En$. We will also omit the subscript from the Morava stabilizer group, thus $\GG$ denotes $\Gn$, and the small stabilizer group will be $\SS$. Other objects which depend on $n$ might also have notation which does not explicitly include $n$, but since $n$ is fixed throughout, there is little chance of confusion.

We will work exclusively in a $\Kn$-local setting, thus all spectra are implicitly or explicitly $\Kn$-local, and tensor products (i.e.\ smash products) are implicitly $\Kn$-localized as needed.

We denote cyclic groups of order $m$ by $C_m$. The center $\ZZ_p^\times$ of $\GG$ contains a finite subgroup of the roots of unity, which we denote by $\mu_{p-1}$. All group cohomology of profinite groups is continuous.

\subsection*{Acknowledgements}
The influence of Agn\`es Beaudry's ideas on this project is hard to overestimate; it is our pleasure to thank her for her unending generosity and enthusiasm for our work.
We also thank Paul Goerss, Hans-Werner Henn, and Peter Symonds for their support and their willingness to share their time and ideas on several aspects of this project.  Additionally, Stojanoska thanks Itamar Mor for explaining his descent result from \cite{mor} in the case of general closed subgroups of $\Gn$. 

During the writing of this article,
Lachmann is supported by the German Research Foundation DFG through the Research Training Group GRK 2240. Lima is supported by the GFSD and NSA Graduate Fellowship.
Zhang is supported by the European Union via the Marie Curie Postdoctoral Fellowship and the Danish National
Research Foundation through the Copenhagen Centre for Geometry and Topology (DNRF151). 
This research has additionally been supported by the National Science Foundation grants DMS-2239362, DMS-2220741, and DMS-2304797.
The authors would like to thank the Hausdorff Institute for Mathematics in Bonn for their hospitality during the Women in Topology IV workshop. 

\section{Preliminaries from chromatic homotopy theory}
We begin by recalling some standard notions and notation from chromatic homotopy theory. 
Fix a formal group law $\Gamma$ of height $n$ over $\FF_{p^n}$ that is already defined over $\FF_p$, for example the Honda formal group law  with $[p]$-series $[p](x) = x^{p^n}$. The Morava $K$-theory spectrum $\Kn$ is a 2-periodic complex-oriented cohomology theory with a formal group law $\Gamma$, and coefficients $\Kn_*=\FF_{p^n}[u^{\pm 1}]$, where $u$ is in degree $-2$.

The Morava stabilizer group $\G=\mathrm{Aut}(\Gamma,\FF_{p^n})$ is the group of automorphisms of the pair $(\Gamma,\FF_{p^n} )$, and the small Morava stabilizer group $\SS=\mathrm{Aut}(\Gamma/\FF_{p^n})$ is the group of automorphisms of $\Gamma$ over $\FF_{p^n}$. Since $\Gamma$ is defined over $\FF_p$, the Galois group $\Gal(\FF_{p^n}/\FF_p)$ acts on $\SS$ and there is a decomposition $$\G \cong \SS \rtimes \Gal(\FF_{p^n}/\FF_p).$$

Denote by $\E=\En$ the Morava (or Lubin-Tate) $E$-theory $\E(\Gamma, \FF_{p^n})$. Let $\E_0\cong W(\FF_{p^n})[\![u_1,\ldots,u_{n-1}]\!]$ be the ring that classifies deformations of $\Gamma$ \cite{LubinTate}. The coefficient ring of $\E$  is a Laurent polynomial ring on the ring $\E_0$
\[\E_*\cong W(\FF_{p^n})[\![u_1,\ldots,u_{n-1}]\!][u^{\pm 1}].\]
The formal group law of $\E$ is a universal deformation of $\Gamma$.
The Morava stabilizer group $\GG$ acts continuously on $\E_*$, and the Goerss-Hopkins-Miller theorem upgrades this action to an $\Einf$-ring action on $\E$ \cite{GoerssHopkins}. The homotopy fixed points  $\E^{h\G}$ with respect to this action recover the $\Kn$-local sphere $L_{\Kn}S^0$ \cite{devinatzhopkins}.

\subsection{Subgroups of the Morava stabilizer groups}%
\label{definition:subgroups}
In this subsection we let $n=p-1$, and introduce the subgroups $N$ and $F$ of the Morava stabilizer groups $\GG$, following \cite[Section 3.6]{henn}. The reader is referred to \cite{henn} for proofs and details on this rather brief summary.

Let $\Gamma$ be the Honda formal group law of height $n$. The endomorphism ring of $\Gamma$ can be described as a non-commutative algebra over the Witt vectors of $\FF_{p^n}$ in one generator $S$ satisfying $S^n = p$, i.e.\ 
\[ \End_{\FF_{p^n}}(\Gamma) \cong \WW_{\FF_{p^n}} \langle S \rangle /(S^n-p) \, . \]
It is the ring of integers of the division algebra $\mathbb D$ over $\QQ_p$ of dimension $n^2$ and Hasse invariant $1/n$. 
The Morava stabilizer group $\SS$ is the group of \textit{automorphisms} of $\Gamma$, i.e. it has the presentation $\SS \cong \End_{\FF_{p^n}}(\Gamma)^{\times}$. 

Choose a primitive $(p^n-1)$-st root of unity $\omega$ in $\mathbb{F}_{p^n}^{\times} \subseteq \SS$.
Denote by $X$ the element $\omega^{\frac{p-1}{2}}S$ in $\SS$, so $X^n = -p$. By \cite[Lemma 19]{henn}, the field $\QQ_p(X)$ is a subalgebra of $\DD$ isomorphic to $\QQ_p(\zeta_p)$ for some primitive $p$-th root of unity $\zeta_p$ in $\DD$, and this isomorphism restricts to $$\ZZ_p[X]/(X^n+p) \cong \ZZ_p[\zeta_p].$$ The element $\zeta_p$ is an algebraic integer and also a unit in $\DD$, thus it is an element of 
$\SS$. We let $C_p=\langle \zeta_p \rangle$; as a subgroup of $\GG$  it is unique up to conjugacy. 
We denote by $N=N_{\GG}(C_p)$ the normalizer of the elementary abelian subgroup $C_p$ in $\GG$.

Denote by $\tau$ the element \[ \tau=\omega^{\frac{p^n-1}{(p-1)^2}} \in \SS \, . \]
The two elements $X$ and $\tau^nX^2$ generate a subgroup of $\GG$ denoted by $H$. This group is isomorphic to $C_{2n} \times C_{n/2}$.

The elements $X$, $\zeta_p$, and $\tau$ generate a finite subgroup of $N$ of order $pn^3$ which we denote by $F$. The group $F$ is a maximal finite subgroup of $\GG$.

\begin{proposition}[{\cite[Proposition 20]{henn}}]\label{prop: extension of N} 
The subgroups $C_p, H, F,$ and $ N$  of $\GG$ are related as follows.
\begin{enumerate}
    \item There is a short exact sequence  
    \[1\rightarrow H\times C_p\times \ZZ_p^n   \rightarrow N\rightarrow \mathrm{Aut(C_p)}\rightarrow 1.\] 
    \item The subgroups $H$, $C_p$ and $\ZZ_p^n $ are invariant with respect to the action of $\mathrm{Aut(C_p)}$. The action on $C_p$ is the tautological action, while $\ZZ_p^n $ is isomorphic to the direct sum of the distinct one-dimensional $\ZZ_p$-representations of $\mathrm{Aut(C_p)}$.
    \item There is a short exact sequence
\[ 1 \to H \times C_p \to F \to \Aut(C_p) \to 1 \, . \]
\end{enumerate}
  \end{proposition}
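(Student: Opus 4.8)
The plan is to translate all three statements into the arithmetic of the division algebra $\DD$ and its distinguished maximal subfield $\QQ_p(X)=\QQ_p(\zeta_p)$, and to read off the group structure from the double centralizer theorem, the Skolem--Noether theorem, and the unit group of a totally ramified local field. The conjugation action of $N=N_\GG(C_p)$ on its normal subgroup $C_p$ produces a homomorphism $N\to\Aut(C_p)$ whose kernel is the centralizer $Z_\GG(C_p)$, so statements (1) and (3) reduce to computing this centralizer (and its finite part) and to checking surjectivity onto $\Aut(C_p)\cong(\ZZ/p)^\times\cong C_n$. Since $[\QQ_p(\zeta_p):\QQ_p]=p-1=n=\sqrt{\dim_{\QQ_p}\DD}$, the field $\QQ_p(\zeta_p)$ is a maximal subfield of the central division algebra $\DD$; by the double centralizer theorem its centralizer in $\DD$ equals itself, so $Z_{\DD^\times}(\zeta_p)=\QQ_p(\zeta_p)^\times$ and the centralizer of $C_p$ inside $\SS=\mathcal{O}_\DD^\times$ is exactly the unit group $\ZZ_p[\zeta_p]^\times$. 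Passing from $\SS$ to $\GG=\SS\rtimes\Gal(\FF_{p^n}/\FF_p)$ additionally contributes ``mixed'' elements $(s,\sigma)$ whose $\SS$-component and Galois component each move $\zeta_p$ but whose product fixes it; these enlarge the prime-to-$p$ torsion of the centralizer to the finite group $H$.

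For surjectivity I would argue via Skolem--Noether: conjugation by any $g\in N_{\DD^\times}(C_p)$ restricts to a $\QQ_p$-algebra automorphism of $\QQ_p(\zeta_p)$ sending $\zeta_p$ to a generator $\zeta_p^a$, hence to an element of $\Gal(\QQ_p(\zeta_p)/\QQ_p)\cong\Aut(C_p)$, and conversely every such field automorphism is inner in $\DD$. Concretely, using the relation $Sa=\phi(a)S$ on Teichm\"uller representatives, conjugation by $\tau$ scales the uniformizer $X$ by a primitive $(p-1)$-st root of unity and therefore realizes a generator of $\Gal(\QQ_p(\zeta_p)/\QQ_p)$, while $\tau^n$ returns to the centralizer. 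Since $\tau\in F$, this simultaneously establishes the surjections $N\twoheadrightarrow\Aut(C_p)$ and $F\twoheadrightarrow\Aut(C_p)$ and, intersecting with the finite subgroup $F$, identifies the kernel of the latter with the torsion subgroup $H\times C_p$, yielding extension (3); the analogous intersection with all of $N$ gives (1).

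The factorization of the centralizer then follows from the structure theory of local units. As $\QQ_p(\zeta_p)/\QQ_p$ is totally ramified of degree $n$ with residue field $\FF_p$, we have $\ZZ_p[\zeta_p]^\times\cong\mu_{p-1}\times\langle\zeta_p\rangle\times(1+\mathfrak m)$ with $1+\mathfrak m\cong\ZZ_p^n$, which pins down the factors $C_p=\langle\zeta_p\rangle$ and $\ZZ_p^n$. For the representation statement (2), invariance of the three factors is automatic because they are the distinguished prime-to-$p$ torsion, $p$-torsion, and torsion-free pieces of $Z_\GG(C_p)$, and the action on $C_p$ is tautological by construction of the map to $\Aut(C_p)$. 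For the action on $\ZZ_p^n$ I would use that $|\Aut(C_p)|=p-1$ is prime to $p$ and $\mu_{p-1}\subset\ZZ_p$, so $\ZZ_p[\Aut(C_p)]$ splits as a product of $n$ copies of $\ZZ_p$ indexed by its characters; the rational normal basis theorem identifies $\QQ_p\otimes(1+\mathfrak m)$ with the regular representation $\QQ_p[\Aut(C_p)]$, forcing each character to appear exactly once, which is precisely the claimed sum of the distinct one-dimensional representations.

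The main obstacle is the exact determination of the finite factor $H$, i.e.\ verifying the isomorphism type $C_{2n}\times C_{n/2}$ rather than merely its order $n^2$. This requires a careful census of the roots of unity in $\DD$ in the style of Amitsur, tracking both the Frobenius twist encoded in $X$ together with the relation $X^n=-p$ (which is responsible for the cyclic factor $C_{2n}$) and the mixed $\SS$-Galois torsion described above, and then checking that $X$ and $\tau^nX^2$ generate exactly this group. A secondary subtlety is that (2) is an integral statement: wild ramification can obstruct a normal integral basis for $\ZZ_p[\zeta_p]$, so the clean splitting of $\ZZ_p^n$ into distinct one-dimensional representations genuinely relies on the prime-to-$p$ order of $\Aut(C_p)$ rather than on an integral normal basis.
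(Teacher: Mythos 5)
The paper offers no proof of this statement: it is quoted directly from Henn's \emph{op.\ cit.} Proposition~20, and the surrounding text explicitly defers all proofs to that reference. Your reconstruction is essentially the standard argument from that source, and it is correct in substance: the kernel of $N\to\Aut(C_p)$ is the centralizer $Z_\GG(C_p)$; since $[\QQ_p(\zeta_p):\QQ_p]=n=\sqrt{\dim_{\QQ_p}\DD}$ the double centralizer theorem gives $Z_{\DD^\times}(\zeta_p)=\QQ_p(\zeta_p)^\times$; Skolem--Noether (made explicit by $\tau$, whose conjugation scales $X$ by the central primitive $(p-1)$-st root of unity $\tau^{-n}$) gives surjectivity onto $\Aut(C_p)\cong\Gal(\QQ_p(\zeta_p)/\QQ_p)$; the factors $C_p\times\ZZ_p^n$ come from the filtration of the local units; and the integral character decomposition in (2) does follow from the normal basis theorem over $\QQ_p$ together with the semisimplicity of $\ZZ_p[\Aut(C_p)]$, exactly as you argue. (Minor slip: $\ZZ_p[\zeta_p]^\times\cong\mu_{p-1}\times(1+\mathfrak m)$ with $1+\mathfrak m\cong\langle\zeta_p\rangle\times\ZZ_p^n$; your formula lists $\zeta_p$ twice. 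Also, $\QQ_p(\zeta_p)/\QQ_p$ is \emph{tamely} ramified, so the worry about wild ramification is moot, though your real point --- that one should argue via $\ZZ_p[\Aut(C_p)]\cong\prod_\chi\ZZ_p$ rather than via an integral normal basis --- is the right one.)

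Two places deserve tightening. First, $X=\omega^{(p-1)/2}S$ satisfies $X^n=-p$ and so is \emph{not} a unit of $\End_{\FF_{p^n}}(\Gamma)$; to treat it as an element of $\GG$ (and to make your ``mixed elements'' precise) you need the identification $\GG\cong\DD^\times/p^{\ZZ}$, under which $S$ maps to Frobenius and $Z_\GG(C_p)$ becomes $\QQ_p(\zeta_p)^\times/p^{\ZZ}$. This is a gap the paper's own exposition shares, but your proof genuinely relies on it. Second, once that identification is in place, the ``main obstacle'' you flag dissolves: no Amitsur-style census of roots of unity in $\DD$ is needed, because $Z_\GG(C_p)$ is the \emph{abelian} group $\bigl(X^{\ZZ}\times\mu_{p-1}\times\langle\zeta_p\rangle\times\ZZ_p^n\bigr)/\langle(X^n,-1,1,0)\rangle$, whose prime-to-$p$ torsion is presented by the relation matrix $\left(\begin{smallmatrix}n& n/2\\ 0& n\end{smallmatrix}\right)$ with Smith normal form $\mathrm{diag}(n/2,\,2n)$, i.e.\ $C_{2n}\times C_{n/2}$; and since $\tau^n$ is a central generator of $\mu_{p-1}$, the elements $X$ and $\tau^nX^2$ visibly generate it. Statement (3) then follows by intersecting with $F$ exactly as you say, using $|F|=pn^3$ and the torsion-freeness of $\ZZ_p^n$.
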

\begin{remark}
     It follows that the quotient $N/C_p$ is a group of cohomological dimension $n$ at the prime $p$, as it is an extension of $\ZZ_p^n$ and finite subgroups of order prime to $p$.
\end{remark}

\subsection{Homotopy fixed points spectral sequences}
We begin by briefly summarizing some results of Devinatz-Hopkins \cite{devinatzhopkins} that relate the $\Kn$-local $\E$-based Adams-Novikov spectral sequence with the homotopy fixed points spectral sequence.
\begin{construction}\label{EASS}
    For any spectrum $X$, and an $ \Ecomm{1} $  (i.e. associative) ring spectrum $R$ we can form a cosimplicial object 
    \begin{center}
    \begin{tikzcd}
   R\otimes X\ar[r,shift left]\ar[r, shift right,swap]&R\otimes R\otimes X \ar[r,shift left=1.5]\ar{r}\ar[r, shift right=1.5,swap]& R\otimes R \otimes R\otimes X\ar[r,shift left=2]\ar[r,shift left=0.7]\ar[r, shift right=0.7]\ar[r, shift right=2]&\cdots
    \end{tikzcd}
\end{center}
in $\Sp$, 
which is obtained by smashing the Amitsur complex of the unit map $S^0\rightarrow R$ with $X$.
The $R$-based Adams-Novikov spectral sequence for $X $ can be obtained as the Bousfield-Kan spectral sequence associated to the totalization of this cosimplicial spectrum.

Alternatively, one constructs a ``filtered'' object giving the same spectral sequence after the $E_2$-page. Namely, denote by $\barR$ the fiber of the unit map $\SKn\to R$; then we have the canonical Adams-Novikov $R$-resolution
\begin{equation}\label{eq:Ebar-res}
 T_{\bullet}^R(X) \to X,
\end{equation}
where $T_m^R(X) = \barR^{\otimes m+1} \otimes X$, 
as in \cite[Definition 2.2.10]{ravenelbook}. To avoid any potential for confusion with the notion of resolution from \Cref{def: resolution of spectra}, we will refer to \eqref{eq:Ebar-res} as the Adams-Novikov \emph{tower} for $X$.
\end{construction}

The classical Adams-Novikov tower is the above based on the complex cobordism spectrum, or $p$-locally, on the Brown-Peterson spectrum $BP$. Throughout this paper we will be working $\Kn$-locally, in which case all the terms in \Cref{EASS} should be re-localized after tensoring.

After $\Kn$-local localization, the $E_2$-page of the $BP$-based Adams-Novikov spectral sequence is usefully identified with the $E_2$-page of the $\E$-based Adams-Novikov spectral sequence, which in turn becomes group cohomology of the stabilizer group. In fact, that is an identification on the level of spectral sequences, per the following result.

\begin{theorem}[\cite{MillerRavenel,devinatzhopkins}]\label{thm: EASS}
    For $G$ any closed subgroup of $\GG$,  the spectral sequence associated to the $\Kn$-localized Adams-Novikov tower of the homotopy fixed points spectrum $\E^{hG}$ is strongly convergent with signature
    \begin{equation}\label{eq:hfpss-S0}
        E_2^{s,t}(G,\E)=H^s(G,\E_t)\Rightarrow \pi_{t-s}(\E^{hG}).
    \end{equation}
   Furthermore, if $X$ is a dualizable object in the $\Kn$-local category, then the spectral sequence obtained by mapping $X$ into the $\Kn$-local Adams-Novikov tower of $\E^{hG}$ is strongly convergent with signature
    \begin{equation}\label{hfpss}
        E_2^{s,t}(G,\E\otimes X)=H^s(G,\E_t(X))\Rightarrow \pi_{t-s}(\E^{hG}\otimes X).
    \end{equation}
\end{theorem}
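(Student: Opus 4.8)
The plan is to realize this spectral sequence as the Bousfield--Kan spectral sequence of an explicit cosimplicial spectrum, compute its $E_1$-page as a complex of continuous cochains, identify the resulting cohomology with continuous group cohomology, and treat convergence separately. Concretely, I would work with the cosimplicial spectrum obtained by $\Kn$-locally smashing the Amitsur complex of the unit $\SKn\to\E$ with $\E^{hG}$, whose associated spectral sequence agrees with the one from the $\barE$-tower past the $E_2$-page (as noted in \Cref{EASS}). Its $E_1$-page is
\[
E_1^{s,t}=\pi_t\,L_{\Kn}\big(\E^{\otimes s+1}\otimes\E^{hG}\big),
\]
with differential the alternating sum of cofaces. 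The first and most substantial step is the computation, due to Devinatz--Hopkins (building on Morava and Strickland), that
\[
\pi_*\,L_{\Kn}\big(\E^{\otimes s+1}\otimes\E^{hG}\big)\cong\Map^{c}\big(\GG^{s}\times\GG/G,\ \E_*\big),
\]
continuous functions, and that under this identification the cofaces become the inhomogeneous cobar differentials for the continuous $\GG$-module $\Map^{c}(\GG/G,\E_*)$. I would deduce the higher terms by iterating the base cases $\pi_*L_{\Kn}(\E\otimes\E)\cong\Map^{c}(\GG,\E_*)$ and $\pi_*L_{\Kn}(\E\otimes\E^{hG})\cong\Map^{c}(\GG/G,\E_*)$.

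Granting this, the $E_2$-page is $H^{s}(\GG,\Map^{c}(\GG/G,\E_t))$. Since $\Map^{c}(\GG/G,\E_*)$ is the coinduced module $\mathrm{CoInd}_G^{\GG}\E_*$, I would apply the continuous version of Shapiro's lemma to rewrite this as $H^{s}(G,\E_t)$, which is exactly the claimed signature. Equivalently, one may phrase the $E_2$-page as $\Ext_{\E_*\E}(\E_*,\E_*\E^{hG})$ over the Hopf algebroid $\E_*\E=\Map^{c}(\GG,\E_*)$ and invoke Morava's change-of-rings theorem, which identifies comodules over this Hopf algebroid with continuous $\GG$-modules.

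For strong convergence I would first record that $\E^{hG}$ is, by construction, the totalization of the cosimplicial object (equivalently the homotopy limit of the partial totalizations of the tower), so the spectral sequence is conditionally convergent to $\pi_*\E^{hG}$. To upgrade conditional to strong convergence I would invoke a horizontal vanishing line on a finite page---which exists by the nilpotence technology for $\Kn$-local Adams towers \cite[Lemma 5.11]{devinatzhopkins}---as a finite-page vanishing line kills Boardman's derived $E_\infty$-term. Finally, for the statement with a dualizable $X$, the same argument applies after smashing the tower with $X$: dualizability lets $-\otimes X$ pass through the totalization and makes $\E_*X$ a dualizable $\E_*$-module, so the base computation upgrades to $\pi_*L_{\Kn}(\E^{\otimes s+1}\otimes\E^{hG}\otimes X)\cong\Map^{c}(\GG^s\times\GG/G,\E_*X)$ and the Shapiro and convergence steps repeat verbatim with coefficients $\E_tX$.

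I expect the main obstacle to be the base-case identification $\pi_*L_{\Kn}(\E\otimes\E^{hG})\cong\Map^{c}(\GG/G,\E_*)$, since for an infinite profinite $G$ the spectrum $\E^{hG}$ must be built by the non-naive Devinatz--Hopkins construction, and recognizing its $\E$-homology as the coinduced module is the genuine technical content here. Once that input and the resulting cobar identification are in hand, the remaining steps---Shapiro's lemma, the passage from cochains to group cohomology, and the convergence argument---are essentially formal.
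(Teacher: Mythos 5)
The paper does not prove this theorem; it is quoted from Devinatz--Hopkins (and Miller--Ravenel), and your proposal is an accurate reconstruction of exactly that argument: the Amitsur/cobar identification $\pi_*L_{\Kn}(\E\otimes\E^{hG})\cong\Map^{c}(\GG/G,\E_*)$, Shapiro's lemma, and strong convergence from conditional convergence plus a finite-page horizontal vanishing line. You also correctly isolate the genuine technical content (the coinduced-module computation for the Devinatz--Hopkins construction of $\E^{hG}$), so this matches the cited proof in approach and substance.
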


While very little can be said about the homotopy fixed point spectral sequence \eqref{eq:hfpss-S0} in general, if the subgroup $G$ contains the subgroup $\mu_{p-1}$ of $(p-1)$st roots of unity, which are central in $\GG$, then we have the following well-known sparseness result on the $E_2$-page. We record it here for convenience, as it will be used to reduce the size of some exotic Picard groups below in \Cref{sec:boundFiltration}.

\begin{proposition}\label{prop:sparse}
   Let $G$ be any closed subgroup of $\G$ containing the central subgroup $\mu_{p-1}$ of roots of unity. Then for any $s$, $H^s(G,\E_t)=0$ unless $t$ is divisible by $2(p-1)$.
\end{proposition}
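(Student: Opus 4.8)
The statement concerns the even-periodic spectrum $\E$, so $\E_t = 0$ for odd $t$ and the claim is automatic there; the content is to show, for $t = 2k$, that $H^{s}(G,\E_{2k})$ vanishes whenever $p-1\nmid k$. The plan is to exploit that $\mu_{p-1}$ is \emph{central} in $\G$ together with the classical fact that inner automorphisms act trivially on group cohomology.

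First I would recall the central action on coefficients: a generator $\omega$ of $\mu_{p-1}\subseteq\ZZ_p^{\times}\subseteq\SS$ is the scalar automorphism $[\omega]$ of $\Gamma$, which extends to every deformation and hence acts trivially on the deformation ring $\E_0$ while rescaling the chosen coordinate $u\in\E_{-2}$. Consequently $\omega$ acts on $\E_{2k}=\E_0\cdot u^{-k}$ as multiplication by a scalar $\lambda=\omega^{\pm k}$, in particular a $(p-1)$-st root of unity in $\ZZ_p^{\times}$. Since $\lambda$ is a central scalar belonging to the coefficient ring, the induced map $\omega_{*}$ on $H^{s}(G,\E_{2k})$ is again multiplication by $\lambda$.

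Next I would invoke that, because $\omega$ is central in $G$, conjugation by $\omega$ is the identity automorphism of $G$. The standard principle that the pair consisting of conjugation by $g$ and the $g$-action on the module induces the identity on $H^{*}(G,-)$ then forces the module automorphism given by multiplication by $\omega$ to act as the identity on $H^{s}(G,\E_{2k})$; that is, $\omega_{*}=\id$. Comparing with the previous paragraph yields $(\lambda-1)\cdot H^{s}(G,\E_{2k})=0$. Finally, when $p-1\nmid k$ the root of unity $\lambda$ is not equal to $1$, and its Teichm\"uller reduction mod $p$ is a nontrivial element of $\FF_p^{\times}$, so $\lambda-1$ is a unit in $\ZZ_p$ and hence acts invertibly on $H^{s}(G,\E_{2k})$. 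Together with the annihilation above this forces $H^{s}(G,\E_{2k})=0$, as desired.

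The only points requiring genuine care are the invocation of the ``inner automorphisms act trivially'' lemma in the profinite, continuous-cohomology setting, and pinning down the precise scalar by which the central $\omega$ acts on $u$. The former holds for continuous cohomology of profinite groups with profinite coefficients, since the usual cochain-level homotopy witnessing triviality of inner automorphisms is itself continuous; the latter is standard Lubin--Tate deformation theory, and only the weak conclusion that $\lambda$ is a nontrivial $(p-1)$-st root of unity for $p-1\nmid k$ is actually needed, so the sign in the exponent is immaterial.
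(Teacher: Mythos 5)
Your argument is correct, but it takes a genuinely different route from the paper. The paper runs the Lyndon--Hochschild--Serre spectral sequence for $\mu_{p-1}\trianglelefteq G$, kills all higher cohomology of $\mu_{p-1}$ because its order $p-1$ is invertible in $\E_*$, and then observes that $H^0(\mu_{p-1},\E_t)=\E_t^{\mu_{p-1}}$ vanishes unless $2(p-1)\mid t$, since a generator acts on $\E_t$ by a nontrivial root of unity. You instead bypass the spectral sequence entirely via the principle that, for central $\omega$, the module endomorphism $m\mapsto\omega m$ induces the identity on $H^*(G,-)$; combined with the fact that this endomorphism is multiplication by a $(p-1)$-st root of unity $\lambda$ with $\lambda-1\in\ZZ_p^\times$ when $p-1\nmid k$, this annihilates the whole cohomology group at once. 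Both arguments ultimately rest on the same input (the explicit central action $\zeta_*u^k=\zeta^k u^k$, $\zeta_*u_i=u_i$), and your caveats about continuity of the cochain homotopy and about the Teichm\"uller reduction of $\lambda-1$ are exactly the right ones and are unproblematic. What each approach buys: yours is shorter and uses centrality in an essential way twice (trivial conjugation, scalar action), and notably does not need $p-1$ to be coprime to $p$; the paper's version is the one that generalizes when the distinguished subgroup is merely normal of order prime to $p$, at the cost of invoking the spectral sequence. Either proof is acceptable here.
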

\begin{proof} The argument here is the same as the argument in the standard sparsity result for the Adams-Novikov Spectral Sequence. For completion, we will just include the argument as given in Heard's thesis \cite[Proposition 4.2.1]{heard2014thesis}.

Given our assumption on G, we have the Lyndon-Hochschild-Serre spectral sequence
\begin{equation*}
         H^i(G/\mu_{p-1}, H^j(\mu_{p-1}, \E_t ))\Rightarrow H^{i+j}(G, \E_t).
    \end{equation*}    
For a given element $g \in \G$, one can find a description of the action  of $g_*$ on $\E_*$ in \cite{devinatz1995}. In the case of central elements in $\G$, such as elements in $\mu_{p-1}$, one can be very explicit: if $\zeta$ is a generator of $\mu_{p-1}$, then
\[\zeta_*u^k=\zeta^{k} u^k \; \; \textrm{and} \;\;\; \zeta u_{i}=u_i.\]
One should note that since the order of $\mu_{p-1}$ is coprime to $p$, hence invertible in $\E_*$, the group $H^j(\mu_{p-1}, \E_t )$ is zero unless $j=0$. So that means we are only left to compute the group $H^0(\mu_{p-1}, \E_t )$, which from the action of $\mu_{p-1}$ given above, one can see that $H^0(\mu_{p-1}, \E_t )$ is nonzero only when $t$ is a multiple of $2(p-1)$, finishing the proof.
\end{proof}

\subsection{Homotopy fixed point spectral sequence for \texorpdfstring{$\E^{hF}$}{\E\^hF}}
\textbf{Suppose now that $p$ is odd, and $n=p-1$.}  The homotopy fixed points spectral sequence \eqref{eq:hfpss-S0},
in case $G$ is a finite subgroup of $\GG$, is explicitly well-understood, due to Hopkins and Miller, and first published in \cite[Section 2]{Nave}. 
The starting point is the following calculation of the $E_2$-page modulo the transfers in the case $G=C_p$.

\begin{proposition}[{Hopkins-Miller, cf. \cite[Theorem 2.1]{Nave}, \cite[Proposition 2.6]{HMS}}]
    There is an exact sequence
    \[\E_t\xrightarrow{tr}H^s(C_p,\E_t)\to \FF_{p^n}[\alpha,\beta,\delta^{\pm}]/(\alpha^2)\to 0\]
    of bigraded groups, where the $(s,t)$-bidegrees are $|\alpha|=(1,2n)$, $|\beta|=(2,2pn)$, and $|\delta|=(0,2p)$.
\end{proposition}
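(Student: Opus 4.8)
The plan is to read the statement as a computation of Tate cohomology. Since $C_p=\langle\zeta_p\rangle$ is cyclic of order $p$, the transfer from the trivial subgroup is the norm $N=1+\zeta_p+\cdots+\zeta_p^{p-1}$; it preserves cohomological degree, so its image lies in $H^0(C_p,\E_t)=\E_t^{C_p}$ and its cokernel in each bidegree is the non-negative part of the Tate cohomology $\tH^{*}(C_p,\E_*)$. Hence the asserted exact sequence is equivalent to an isomorphism $\tH^{\geq 0}(C_p,\E_*)\cong\FF_{p^n}[\alpha,\beta,\delta^{\pm}]/(\alpha^2)$, and I would organize the proof around the standard two-periodic Tate complex
\[
\cdots\xrightarrow{\ \nu\ }\E_*\xrightarrow{\ N\ }\E_*\xrightarrow{\ \nu\ }\E_*\xrightarrow{\ N\ }\cdots,\qquad \nu=\zeta_p-1,
\]
whose homology computes $\tH^{*}$. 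A useful algebraic input is that $\zeta_p^p=1$ forces $N\equiv\nu^{p-1}\pmod p$, so modulo $p$ the two maps are simply $\nu$ and $\nu^{p-1}$. Because $\tH^{*}$ is $2$-periodic via cup product with the periodicity class $\beta\in\tH^{2}(C_p,\E_{2pn})$, and $\alpha^2=0$ is automatic in odd cohomological degree for $p$ odd, the entire ring is determined by the degree-$0$ and degree-$1$ lines together with multiplication by $\beta$ and $\delta$.

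The geometric input that makes the complex computable is the shape of the $\zeta_p$-action. As $\zeta_p\in\SS$ has order $p$ while $\FF_{p^n}^{\times}$ has order prime to $p$, its linearization on the coordinate is a $p$-th root of unity in $\FF_{p^n}^{\times}$, hence trivial; thus $\zeta_p$ acts trivially on $\E_*/\mathfrak m=\FF_{p^n}[u^{\pm}]$, where $\mathfrak m=(p,u_1,\dots,u_{n-1})$. Consequently $\nu$ strictly raises the $\mathfrak m$-adic filtration, and I would run the spectral sequence of this filtration. On the associated graded $\gr_{\mathfrak m}\E_*\cong\FF_{p^n}[u^{\pm}][\bar p,\bar u_1,\dots,\bar u_{n-1}]$ the induced action is trivial, so the input is $\gr_{\mathfrak m}\E_*\otimes\tH^{*}(C_p,\FF_p)$, and the first nonzero differential is controlled by the leading term of $\nu$ on the generators $u_i$ and $p$.

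The main obstacle, and the heart of the original Hopkins--Miller computation, is to determine this leading term precisely. I would extract the first-order action of $\zeta_p$ on the deformation parameters from Lubin--Tate theory together with the explicit description $\ZZ_p[X]/(X^n+p)\cong\ZZ_p[\zeta_p]$ of \Cref{prop: extension of N} and the relation $X^n=-p$. The expectation is that, after a convenient change of generators, $\nu$ behaves like a Koszul differential that shifts among the nilpotent directions $\bar p,\bar u_1,\dots,\bar u_{n-1}$ by one filtration step, so that the homology of the $(\nu,\nu^{p-1})$-complex on $\gr_{\mathfrak m}(\E_*/p)$ collapses all of these classes and leaves precisely the degrees detected by $\alpha$, $\beta$, and $\delta^{\pm}$. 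Concretely I would identify $\delta$ with the class detected by the unit $u^{-p}$ (producing the $2p$-periodicity of the degree-$0$ line $\tH^{0}\cong\FF_{p^n}[\delta^{\pm}]$), and $\alpha\in\tH^{1}$, $\beta\in\tH^{2}$ with the images of the Greek-letter elements $\alpha_1$ and $\beta_1$, then check that $\tH^{1}\cong\alpha\cdot\FF_{p^n}[\delta^{\pm}]$ is free of rank one over the degree-$0$ part in each internal degree.

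Finally, I would resolve the remaining extension problems: that the filtration spectral sequence degenerates after the leading differential, that every class in positive Tate degree is $p$-torsion so the answer is genuinely an $\FF_{p^n}$-algebra absorbed into $\coker(tr)$, and that the multiplication is exactly $\FF_{p^n}[\alpha,\beta,\delta^{\pm}]/(\alpha^2)$ with $\beta$ the periodicity generator. The delicate bookkeeping throughout is tracking internal degrees so that $\alpha$, $\beta$, $\delta$ land in the stated bidegrees $(1,2n)$, $(2,2pn)$, $(0,2p)$; this is precisely where the height $n=p-1$ enters, guaranteeing that the torsion subgroup is exactly $C_p$ and that $\nu$ exhibits the single expected Koszul pattern rather than a more complicated one.
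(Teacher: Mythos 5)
The paper does not prove this proposition: it is quoted as a known, unpublished computation of Hopkins--Miller, with Nave and HMS cited as written accounts. So the relevant question is whether your sketch would actually reconstruct that computation. Your reduction of the statement to a Tate cohomology calculation is correct and cleanly put: the transfer from the trivial subgroup only hits $H^0$, so the displayed exact sequence is exactly the assertion $\tH^{\geq 0}(C_p,\E_*)\cong\FF_{p^n}[\alpha,\beta,\delta^{\pm}]/(\alpha^2)$, and the two-periodic complex with $\nu=\zeta_p-1$, $N\equiv\nu^{p-1}\pmod p$ is the standard vehicle. The observation that $\zeta_p$ acts trivially on $\E_*/\mathfrak m$ because $\FF_{p^n}^{\times}$ has no $p$-torsion is also correct.

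There are, however, two genuine gaps. First, ``$\nu$ strictly raises the $\mathfrak m$-adic filtration'' does not follow from triviality of the action on the residue field: that only forces the induced action of $\zeta_p$ on $\mathfrak m/\mathfrak m^2$ to be \emph{unipotent} (its order divides $p$), not trivial, so the identification of the first page of your filtration spectral sequence with $\gr_{\mathfrak m}\E_*\otimes\tH^*(C_p,\FF_p)$ is unjustified as stated; one must either prove triviality on the associated graded in suitable coordinates or replace $\mathfrak m^k$ by a weighted filtration adapted to the actual leading terms. Second, and more seriously, the entire mathematical content of the proposition is the determination of those leading terms --- the first-order action of $\zeta_p$ on $p,u_1,\dots,u_{n-1}$ coming from Lubin--Tate theory and the relation $X^n=-p$ --- together with the verification that the resulting operator has Koszul homology concentrated exactly in the bidegrees of $\alpha$, $\beta$, $\delta^{\pm}$ (for instance, that $\tH^0$ is $2p$-periodic on a single $\FF_{p^n}$ and that $\tH^{\mathrm{odd}}$ is free of rank one over it). You explicitly defer this step (``the expectation is that\dots''), but it is precisely the part that cannot be waved at: without it one cannot rule out extra classes, a different periodicity in $t$, or a more complicated module structure. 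As written, the proposal is a credible plan for redoing the Hopkins--Miller computation, not a proof of it; for the purposes of this paper the honest move is the one the authors make, namely to cite the result.
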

When $F\subset \GG$ is a maximal finite subgroup containing $C_p$, we obtain a similar exact sequence. 
   
The following result follows from the explicit understanding of the homotopy fixed points spectral sequence of $\E^{hF}$, which was first recorded in \cite[Section 2]{Nave}, and the fact that the generators on its $E_2$-page are invariant under the action by the Galois group.
\begin{theorem}[Hopkins-Miller, cf. \cite{HMS}, Lemma 2.8]\label{thm: hfpss for E^hF}
Modulo the image of the transfer map $tr: \E_* \to H^*(F, \E_*)$, the $E_2$-page of the homotopy fixed point spectral sequence 
\begin{equation}\label{sseq: HFPSS for F}
    E_2^{s,t}(F,\E) = H^s(F, \E_t) \Longrightarrow \pi_{t-s} \E^{hF},
\end{equation} is given by
\[
E_2^{*,*}/(\mathrm{tr})=H^*(F, \E_*)/(\mathrm{tr}) \cong \FF_{p}[\alpha, \beta, \Delta^{\pm 1}]/(\alpha^2)
\]
 with $|\alpha|=(1,2n)$, $|\beta|=(2,2pn)$, and $|\Delta|=(0,2pn^2)$. Along the line $s=0$, classes are concentrated in degrees $t = t-s$ divisible by $2n$.

The differentials are generated multiplicatively by
\begin{equation}\label{eq:hfpss differentials}
    d_{2n+1}(\Delta)=d_{2p-1}(\Delta)=\alpha \beta^{n}\quad \quad \text{ and }\quad\quad
    d_{2n^2+1}(\Delta^n\alpha)=\beta^{n^2+1},
\end{equation}
up to units, with $E_{\infty}(F,\E)=E_{2n^2+2}(F,\E)$. The class $\Delta^p$ is a permanent cycle and a periodicity generator for $\pi_*\E^{hF}$.  
\end{theorem}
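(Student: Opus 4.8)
The plan is to first pin down the $E_2$-page from the $C_p$-computation recalled above, and then force all differentials using the $\Kn$-local vanishing of the Tate construction together with sparseness; throughout I work modulo the image of the transfer, as in the statement. Note first that since $n=p-1$ we have $2n+1=2p-1$, so the two expressions $d_{2n+1}(\Delta)$ and $d_{2p-1}(\Delta)$ denote the same differential, which I will write as $d_{2n+1}$. For the $E_2$-page I would start from the Hopkins--Miller identification $H^*(C_p,\E_*)/(\mathrm{tr})\cong \FF_{p^n}[\alpha,\beta,\delta^{\pm 1}]/(\alpha^2)$ and pass to $F$ by taking $F/C_p$-invariants. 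This is legitimate because the extension $1\to H\times C_p\to F\to \Aut(C_p)\to 1$ of \Cref{prop: extension of N} exhibits $F/C_p$ as a group of order $n^3$, which is prime to $p$, so a restriction--corestriction argument gives $H^*(F,\E_*)/(\mathrm{tr})\cong \big(H^*(C_p,\E_*)/(\mathrm{tr})\big)^{F/C_p}$. The bookkeeping step is to identify the invariants: $\alpha$ and $\beta$ survive, the coefficient field is cut down from $\FF_{p^n}$ to $\FF_p$, and since the only $s=0$ generator is $\delta$, the smallest invariant power is $\Delta=\delta^{n^2}$, of bidegree $(0,2pn^2)$. This yields $E_2/(\mathrm{tr})\cong \FF_p[\alpha,\beta,\Delta^{\pm 1}]/(\alpha^2)$, with the $s=0$ classes in internal degrees divisible by $2n$ as claimed.

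For the differentials I would pass to the associated Tate (equivalently $\beta$-inverted) spectral sequence, with $E_2$-page the Farrell--Tate cohomology $\tH^*(F,\E_*)\cong \FF_p[\alpha,\beta^{\pm 1},\Delta^{\pm 1}]/(\alpha^2)$. The crucial input is that the $\Kn$-local Tate construction $\E^{tF}$ vanishes for the finite group $F$ (Greenlees--Sadofsky, Hovey--Sadofsky), so this spectral sequence converges to zero. I then force the differentials by a degree count, using that $\alpha$ and $\beta$ are permanent cycles (they detect $\alpha_1$ and $\beta_1$). Solving the bidegree equation for a possible target of $d_r(\Delta)$ shows that the first admissible differential occurs at $r=2n+1$ with the unique target $\alpha\beta^n$, and the same count shows that $\Delta$ is the only class mapping to $\alpha\beta^n$. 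Since $\alpha\beta^n$ is a permanent cycle unless it is hit, yet must die for the spectral sequence to converge to zero, we conclude $d_{2n+1}(\Delta)=\alpha\beta^n$ up to a unit.

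Propagating by the Leibniz rule computes the next page: in the Tate spectral sequence the survivors are the $\alpha$-free monomials $\Delta^{pa}\beta^b$ and the $\alpha$-monomials $\Delta^a\alpha\beta^b$ with $a\equiv -1\equiv n\pmod p$. Repeating the argument, the next class forced to die is $\beta^{n^2+1}$, whose unique possible source is $\Delta^n\alpha$ (here the degree match uses $p=n+1$), giving $d_{2n^2+1}(\Delta^n\alpha)=\beta^{n^2+1}$; after this differential the Tate $E_{2n^2+2}$-page is zero, so there is no further room and the spectral sequence collapses, whence $E_\infty=E_{2n^2+2}$ for \eqref{sseq: HFPSS for F} (the transfer classes being permanent cycles). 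Finally I would transport these differentials to the ordinary spectral sequence \eqref{sseq: HFPSS for F} along the comparison map $H^*(F,\E_*)\to \tH^*(F,\E_*)$, which is an isomorphism modulo transfers in positive cohomological degrees and is compatible with differentials; since the sources $\Delta,\Delta^n\alpha$ have nonzero image and the targets $\alpha\beta^n,\beta^{n^2+1}$ lie in positive degree, the same differentials hold integrally. The class $\Delta^p$ is a $d_{2n+1}$-cycle because $p\equiv 0$, and, carrying no $\alpha$, it is also a $d_{2n^2+1}$-cycle; being invertible it is a permanent cycle and a periodicity generator.

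The main obstacle is proving that the two differentials are nonzero, rather than merely identifying their unique degree-admissible targets. This is exactly where Tate vanishing does the work: convergence to zero of the $\beta$-inverted spectral sequence, combined with the uniqueness-of-source sparseness analysis, upgrades ``there is a possible differential'' to ``this differential is nonzero.'' A secondary technical point is the invariant-theoretic computation of the $F/C_p$-fixed subring, together with the verification that the comparison map $H^*\to\tH^*$ faithfully records differentials away from filtration zero, so that the pattern established in the Tate spectral sequence is precisely the pattern in \eqref{sseq: HFPSS for F}.
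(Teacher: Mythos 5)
Your route is genuinely different from the paper's: the paper does not reprove this theorem but cites the Hopkins--Miller computation (via Nave and HMS), where the differentials are imported from the classical Adams--Novikov spectral sequence for the sphere along $\Ext_{BP_*BP}(BP_*,BP_*)\to H^*(\GG,\E_*)\to H^*(F,\E_*)$ --- this is exactly what \Cref{rem:beta1origin} records. Your $E_2$-page identification via $F/C_p$-invariants is the standard one and is fine in outline, and the observation that $2n+1=2p-1$ is correct. But your attempt to force the differentials purely from the vanishing of $\E^{tF}$ plus a degree count has a genuine gap at the decisive step.

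The vanishing of the Tate spectral sequence only says that $\alpha\beta^n$ must die at \emph{some} page, and your claim that $\Delta$ is the only class that can hit it is true only on the $E_{2n+1}$-page. Solving the bidegree equations in the $\beta$-inverted spectral sequence shows that $\alpha\beta^n$ also admits degree-admissible sources on higher pages, e.g.\ $\beta^{-pn^2}\Delta^{pn}$ via $d_{2n+1+2pn^2}$. Equivalently: the first nonzero differential is a derivation vanishing on the permanent cycles $\alpha$ and $\beta^{\pm1}$, hence is determined by its value on $\Delta$, and the full list of admissible options is $d_{2n+1+2pn^2\ell}(\Delta)=\alpha\beta^{n(1+pn\ell)}\Delta^{-\ell(pn-1)}$ for $\ell\geq 0$; nothing in your argument excludes $\ell\geq 1$. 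Convergence to zero cannot by itself distinguish these scenarios, since in every one of them the degree count locates the death of the unit no earlier than page $2n^2+1$, and the potential sources hitting the unit all lie in negative filtration, outside the reach of the half-plane spectral sequence \eqref{sseq: HFPSS for F}. The nonvanishing of $d_{2n+1}(\Delta)$ is precisely the hard content of the Hopkins--Miller theorem; classically it is supplied by the Toda differential $d_{2p-1}(\beta_{p/p})\doteq\alpha_1\beta_1^{p}$ in the Adams--Novikov spectral sequence for the sphere together with the fact that $\beta_{p/p}$ maps to a unit multiple of $\beta\Delta$ in $H^2(F,\E_{2p^2n})$, whence $d_{2p-1}(\Delta)=\alpha\beta^{n}$ after dividing by the permanent cycle $\beta$. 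The same issue recurs for $d_{2n^2+1}(\alpha\Delta^n)=\beta^{n^2+1}$: uniqueness of the source holds only page by page, and the nonvanishing needs external input (e.g.\ the precise nilpotence order of the image of $\beta_1$ in $\pi_*\E^{hF}$). Note also that both this paper (\Cref{prop: tate for E^hF}) and Heard's references run the comparison in the opposite direction, deducing the Tate differentials from the already-known homotopy fixed point differentials; reversing that logic is exactly where your argument does not close.
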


\begin{remark}\label{rem:beta1origin}
The differentials of this spectral sequence are deduced by a comparison with the classical Adams-Novikov spectral sequence, via the composition \[\Ext_{BP_*BP}(BP_*,BP_*)\to H^*(\GG,\E_*)\to H^*(F,\E_*).\] In particular, under this map  the element $\beta_1\in\pi_{2pn-2}(S^0)$ is detected by the permanent cycle $\beta$, cf. \cite{Nave} and \cite{Ravenel_Arf}.
\end{remark}

\subsection{Finite resolutions of \texorpdfstring{$\E^{hN}$}{\E\^hN} and \texorpdfstring{$\E^{h\GG}$}{\E\^hG}}
As can be glimpsed from \Cref{thm: EASS}, the $\Kn$-local category of spectra $\Sp_{\Kn}$ is largely controlled by the continuous cohomology of the  Morava stabilizer group $\G$. As a result, the homological properties of $\GG$ are reflected in homotopy. For one, the existence of a finite length Adams $\E$-resolution for $\SKn$ is closely connected to the existence of a finite length projective resolution of the trivial $\G$-module $\mathbb{Z}_p$. However, that can only happen in case $(p-1)$ does not divide $n$; see \cite[Theorem 4]{henn}. Otherwise the small stabilizer group $\SS$ has infinite virtual cohomological dimension at $p$.

In \cite{GHMR}, Goerss, Henn, Mahowald, and Rezk pioneered the study of finite resolutions of the $\Kn$-local sphere by spectra which are not $\E$-injective or flat over $\E$, but are nonetheless well understood. While \cite{GHMR} deals with the case of $p=3$ and $n=2$, \cite{henn} discusses similar resolutions at more general heights and primes. In particular, in op.cit., Henn constructs a resolution of the $\Kn$-local sphere and the related spectrum $\E^{hN}$ at height $n=p-1$ for arbitrary odd primes $p$. We review those resolutions here, as they will play an important role in proving \Cref{thm: tate splits} below.

Throughout this paper we will use the term ``resolution'' of spectra
which we define below.
\begin{definition}[{\cite[Section 3.3.1]{henn}}]\label{def: resolution of spectra} A sequence of spectra
\begin{equation}\label{eq:resn}
    * \rightarrow X \rightarrow X_{0}\rightarrow X_{1} \rightarrow \cdots 
\end{equation}
is a \emph{resolution} of $X$ if the composite of any two consecutive maps is null-homotopic, and any of the maps $X_i \rightarrow X_{i+1}$ for $i \geq 0$ can be written as $X_i \rightarrow C_{i} \rightarrow X_{i+1}$ such that each $C_{i-1} \rightarrow X_{i} \rightarrow C_{i}$ is a cofibration for every $i \geq 0$. Here, our convention is that $C_{-1}:=X$. We  say that the resolution is of \textit{length n} if $C_{n} \cong X_n$ and $X_i \cong *$ for $i> n$.
\end{definition}

\begin{remark}\label{rmk: towersseq}
    Note that this definition implies that the resolution can be refined to a tower of spectra 
\begin{equation}\label{eq:tower-of-fibrations}
\xymatrix@C=10pt@R=12.5pt{
 X \ar[dr] && F_0 \ar[ll] \ar[dr] &&F_1 \ar[ll] \ar[rd] &&F_2 \ar[rd] \ar[ll]  && \ldots \ar[ll]\\
&X_0 \ar@{-->}[ru] && \Sigma^{-1}X_1 \ar@{-->}[ru] && \Sigma^{-2}X_2 \ar@{-->}[ru] && \Sigma^{-3}X_3 \ar@{-->}[ru],
}
\end{equation}
in which each $F_{i}$ is the homotopy fiber of the map $F_{i-1} \to \Sigma^{-i}X_{i}$. Note
that $F_i=\Sigma^{-i-1}C_i$ for all $i\geq 0$. This tower gives rise to an associated \textit{resolution spectral sequence}
\begin{equation}\label{ss: resolutionSS}
E_1^{s,t} =\pi_{t} X_s \Longrightarrow \pi_{t-s}X,
\end{equation}
\end{remark}

For the remainder of this section, \textbf{let $n=p-1$ for an odd prime $p$.}
While a finite $\E$-based Adams resolution (i.e. tower) for $\SKn$ does not exist, in \cite[Section 3.6]{henn}, Henn constructs a finite resolution for it whose terms are wedge sums of suspensions of homotopy fixed points of $\E$ under the action of the finite subgroup $F$ of $\GG$, as well as certain retracts of $\E$.
Note that $\E^{hF}$ is well understood due to the \Cref{thm: hfpss for E^hF}. 

The first step towards a resolution of $\SKn$ is an algebraic resolution of $\ZZ_p$ as a $\Gring{\GG}$-module.
To construct this algebraic resolution, Henn considers the following short exact sequence of $\Gring{\GG}$-modules \cite[Prop. 17]{henn},
\begin{equation}\label{resolutionexact}
0 \leftarrow \ZZ_p \xleftarrow{\epsilon}  \ZZ_p\!\uparrow_{N}^{\GG} \; \xleftarrow{f} K \leftarrow 0\  
\end{equation}
where the map $\epsilon$ is the canonical augmentation map from the induced module to the trivial one, and $K$ is simply defined as the kernel of $\epsilon$. Thus, having appropriate resolutions of $K$ and $ \ZZ_p\!\uparrow_{N}^{\GG} $ would yield a resolution of $\ZZ_p$, by taking the total complex of the resulting double complex.

On the topological level, the exact sequence \eqref{resolutionexact} is realized by a cofibration
\begin{equation}\label{resolutionexacttop}
 \SKn\simeq \E^{h\GG} \xrightarrow{\epsilon}  \E^{hN} \rightarrow C.
\end{equation}
The finite algebraic resolution of $\ZZ_p\!\uparrow_{N}^{\GG}$ by permutation modules yields an analogous resolution of $\E^{hN}$. Similarly, the finite projective resolution for $K$ gives rise to a finite topological resolution of $C$.  Here we summarize these results in the following theorem; see \cite[Section 3.5, Proposition 17, and Section 3.6]{henn} for details.

\begin{theorem}[{\cite[Theorems 25 and 26]{henn}}]\label{thm: Henntopres}\label{thm: Henntopres for G}
Let $p>2$ be a prime number and let $n=p-1$. 
\begin{enumerate}
    \item There is a resolution of length $n$
\begin{equation}\label{eq:henn-resolution-N}
X_\bullet \colon * \rightarrow \E^{hN} \rightarrow X_{0} \rightarrow \cdots \rightarrow X_n \rightarrow * 
\end{equation}
The spectrum $X_0$ is equivalent to $\E^{hF}$, while for $r>0$ we have 
\[X_r \simeq \bigvee_{(i_1, \cdots, i_r)} \Sigma^{2p^2n (i_1+ \cdots + i_r)} \E^{hF}\]
where the wedge is taken over all sequences of integers $(i_1, \cdots, i_r)$ with $ 0\leq  i_1< i_2 \cdots <i_r \leq n-1$. 

\item There is a resolution of finite length $m>n$,
\begin{equation}\label{eq:henn-resolution-G}
    Z_{_\bullet}: *  \rightarrow \SKn \rightarrow Z_{0} \rightarrow \cdots \rightarrow Z_{m} \rightarrow * \
\end{equation}
The spectrum $Z_0$ is equivalent to $\E^{hF}$, for $r>n$ each $Z_r$ is a summand of a finite wedge of $\E$'s, while for $0< r \leq n$,
\[Z_r \simeq V_r \vee X_r\]
where the $X_r$'s are as in (1) and $V_r$ is a direct summand of a finite wedge sum of $\E$'s.
\end{enumerate}
\end{theorem}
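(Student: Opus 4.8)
The plan is to realize on the level of spectra two algebraic resolutions of the trivial module $\ZZ_p$: one over $\Gring{N}$ by twisted permutation modules induced from $F$, and one over $\Gring{\GG}$ by such permutation modules together with free modules, following the strategy of \cite{GHMR} carried out at height $n=p-1$ in \cite{henn}. Throughout I would use the dictionary between equivariant homotopy fixed point spectra and completed modules underlying \Cref{thm: EASS}: a twisted permutation module $\ZZ_p(\chi)\uparrow_F^\GG$ is realized by a suspension of $\E^{hF}$, a free module $\Gring{\GG}$ is realized by a copy of $\E$, and maps between finite wedges of such spectra are faithfully detected by homomorphisms of the associated Morava modules. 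The content of both parts is then to produce the algebraic resolutions and to check that this realization goes through.

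For part (1), I would start from the semidirect decomposition $N\cong\ZZ_p^n\rtimes F$ extracted from \Cref{prop: extension of N}: the subgroup $\ZZ_p^n$ is normal and torsion free with $N/\ZZ_p^n\cong F$. Over $\Gring{\ZZ_p^n}$ the trivial module $\ZZ_p$ has the length-$n$ Koszul resolution $\Lambda^\bullet(\ZZ_p^n)\otimes_{\ZZ_p}\Gring{\ZZ_p^n}$, and since $\ZZ_p\uparrow_F^N\cong\Gring{\ZZ_p^n}$ this Koszul complex is $N$-equivariant with terms that are twisted permutation modules up from $F$. Using the identification of $\ZZ_p^n$ with the sum of the distinct one-dimensional $\Aut(C_p)$-representations (\Cref{prop: extension of N}(2)), the $r$-th exterior power splits as $\bigoplus_{0\le i_1<\cdots<i_r\le n-1}\ZZ_p(\chi^{i_1+\cdots+i_r})$, so the $r$-th term is $\bigoplus_{0\le i_1<\cdots<i_r\le n-1}\ZZ_p(\chi^{i_1+\cdots+i_r})\uparrow_F^N$. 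Matching each character to the corresponding shift in the internal grading of $\E_*$ dictates the suspensions $\Sigma^{2p^2 n(i_1+\cdots+i_r)}$, and realizing the Koszul complex yields \eqref{eq:henn-resolution-N} with $X_0\simeq\E^{hF}$.

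For part (2), I would feed the algebraic short exact sequence \eqref{resolutionexact}, $0\leftarrow\ZZ_p\leftarrow\ZZ_p\uparrow_N^\GG\leftarrow K\leftarrow 0$, into a double complex. Inducing the $N$-resolution of part (1) up along the (flat) base change $\Gring{\GG}\otimes_{\Gring{N}}(-)$ resolves $\ZZ_p\uparrow_N^\GG$ by modules $\bigoplus\ZZ_p(\chi)\uparrow_F^\GG$, since induction from $F$ to $N$ and then to $\GG$ is induction from $F$ to $\GG$; these realize to the $X_r$ summands and account topologically for the cofibration \eqref{resolutionexacttop}. Separately I would produce a finite-length free resolution of $K$ over $\Gring{\GG}$; its existence and finiteness is the decisive algebraic input, reflecting that once the $p$-torsion has been absorbed into the permutation part via $N$, the remaining module has finite projective dimension governed by the finite virtual cohomological dimension $n^2$ of $\SS$. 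The free terms realize to the wedges $V_r$ of $\E$'s. Taking the total complex (the cone of the lift $Q_\bullet\to P_\bullet$ of $K\hookrightarrow\ZZ_p\uparrow_N^\GG$) gives a resolution of $\ZZ_p$ of finite length $m>n$ whose $r$-th term is $V_r\oplus X_r$ for $0<r\le n$ and purely free for $r>n$, with degree-zero term $\E^{hF}$; realizing it produces \eqref{eq:henn-resolution-G}.

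The main obstacle is the passage from algebra to topology. Realizing the individual modules is routine, but realizing the boundary maps and assembling them into a genuine resolution in the sense of \Cref{def: resolution of spectra}---so that consecutive composites are null and each stage factors through an honest cofibration---requires controlling an obstruction theory. Concretely, I would proceed by induction on $r$: having realized the first $r$ terms and their maps, I would lift the next algebraic boundary map using that $[W,Y]$, for $W$ and $Y$ finite wedges of suspended copies of $\E^{hF}$ and $\E$, is computed in homological degree zero by homomorphisms of Morava modules, the higher-filtration contributions to the relevant mapping spectral sequence vanishing in the pertinent range as a consequence of the structure and sparseness recorded in \Cref{thm: hfpss for E^hF} and \Cref{prop:sparse}; then I would form the cofiber and identify it with the algebraic kernel at the next stage. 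Verifying this vanishing of obstructions at every stage, together with establishing the finiteness of the free resolution of $K$, is where essentially all of the work lies.
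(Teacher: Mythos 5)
This theorem is not proved in the paper at all: it is quoted from Henn's Theorems 25 and 26, so there is no internal argument to compare against. Your outline does reconstruct the strategy of the cited source faithfully: part (1) comes from realizing the Koszul complex of $\ZZ_p^n$ for the decomposition $N\cong\ZZ_p^n\rtimes F$ supplied by \Cref{prop: extension of N}, with the exterior powers splitting into the characters of $\Aut(C_p)$ that dictate the suspensions $\Sigma^{2p^2n(i_1+\cdots+i_r)}$; part (2) comes from splicing the induced resolution of $\ZZ_p\!\uparrow_N^{\GG}$ with a finite projective resolution of $K$, whose existence rests on $K$ having trivial Farrell--Tate cohomology because $N\hookrightarrow\GG$ induces an isomorphism on $\tH^*$ (Symonds). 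Both of these match what Henn actually does.

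The one place where your sketch is materially too optimistic is the realization step. The obstruction groups for lifting the algebraic boundary maps are computed by a descent spectral sequence whose $E_2$-term involves $H^s\bigl(F,\Map^c(\GG/F',\E_*)\bigr)$, i.e.\ continuous cohomology of intersections of conjugates of the finite subgroups; since $F$ contains $p$-torsion, these groups are nonzero in arbitrarily high cohomological degree $s$. \Cref{thm: hfpss for E^hF} and \Cref{prop:sparse} give sparseness in the internal degree $t$ only, not vanishing in $s$, so they do not kill the higher-filtration contributions as you claim. Henn's proof does not get the realization for free from sparseness; it invokes a dedicated realization theorem for algebraic resolutions by permutation modules induced from finite subgroups (following the machinery of \cite{GHMR}), and that theorem --- together with the finiteness of the projective resolution of $K$, which you do flag --- is the substantive input your argument would still need to supply.
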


 From a computational standpoint, the resolution  $ Z_\bullet$ of $\SKn$ may be inefficient, since it lacks an explicit description of all the $V_r$ terms. By construction, these terms are derived from an algebraic resolution of the $\Gring{\GG}$-module $K$, which is generally mysterious and encodes the difference between $H^*(\GG)$ and $H^*(N)$.  Nonetheless, the close relationship between the resolution $X_\bullet$ of $\E^{hN} $ and $Z_\bullet$ of $\SKn$ was the main inspiration for us to first attempt to understand the Picard group of $\Kn$-local $\E^{hN}$-modules, as a step toward that of the $\Kn$-local category.
    
\begin{remark}
     The length $m$ of the resolution $Z_\bullet$ can be explicitly bounded. Namely, there is a resolution of length $n^2$ stemming from the fact that there exists a minimal-length algebraic resolution of the $\Gring{\GG}$-module $K$ \cite{scheiderer,symondsres}.
\end{remark}

\begin{remark}
Note that in the case $p=3$, the resolution of \Cref{thm: Henntopres}(2) is very different from the duality resolution of \cite{GHMR}.
 Nonetheless, just as in \cite{ghmrpicard}, this resolution can be key to understanding the exotic $\Kn$-local Picard group $\kappa_n$. 
\end{remark}

\begin{remark}\label{rem: ext gens}
    By construction, the terms in the resolution $\E^{hN}\to X_\bullet$ are indexed by a graded exterior algebra $\Lambda(a_0,\dots, a_{n-1})$, cf. \cite[Proposition 21]{henn}, such that $X_1$ can be thought of as $\bigvee_i a_i \E^{hF}$. More precisely, there is an equivalence 
    \[ \bigvee_{r=0}^{n} X_r \simeq \Lambda(a_0,\dots,a_{n-1})\otimes \E^{hF}. \]
    Compare with \Cref{rem:alphabetaconsistent} below. Note, however, that since the resolution $X_\bullet$ is not constructed to have any multiplicative properties, this is only an additive equivalence which nonetheless underlies the multiplicative structure in the Farrell-Tate cohomology of $N$.
    \end{remark}


\section{Farrell-Tate cohomology with coefficients in \texorpdfstring{$\E_*$}{E\_*}}\label{sec:Tate}
A first obstruction to understanding the $\Kn$-local $\E$-based Adams-Novikov, or homotopy fixed point spectral sequence \eqref{eq:hfpss-S0}, is computing its $E_2$-page, namely the continuous group cohomology of a closed subbroup of the Morava stabilizer group with coefficients in $\E_*$. While a complete determination is out of reach, we can get partial information by passing to the Farrell-Tate cohomology, which is controlled by the maximal finite subgroups.\footnote{For the precise statement, see \cite[Theorem 1.3]{Symonds} or \cite[Theorem 7.3]{symondsres}.} For $G=\GG$ or $N$ at the heights $n=p-1$, this results in a full and explicit computation in high degrees (more precisely, above the virtual cohomological dimension). The topological companion of this comparison is discussed in \Cref{sec:betainvert}. In this section we will recall the background and relevant computations at $n=p-1$. 

First, let us briefly recall some properties of the Farrell-Tate cohomology of profinite groups of finite virtual cohomological dimension; for details, the reader can consult \cite{scheiderer,symondsres}. Scheiderer's approach closely follows the classical case for (infinite) discrete groups; details can be found in \cite[X.2-3]{brown}. 

Suppose $G$ is a profinite group, satisfying the $FP_\infty$ finiteness criterion over $\ZZ_p$; in other words, there exists a resolution of the trivial module $\ZZ_p$ by finitely generated projective $\Gring{G}$-modules. We also assume that the virtual cohomological dimension $d$ of $G$ is finite. It is well-established that the Morava stabilizer groups $\G$ (and thus any of its closed subgroups), at any height and prime, satisfy these conditions.

In this situation, the trivial $G$-module $\ZZ_p$ has a complete resolution $F_\bullet$ by finitely generated projectives, and for a discrete or compact $\Gring{G}$-module $M$, the Farrell-Tate cohomology $\tH^*(G,M)$ can be defined as the cohomology of the complex of continuous $G$-homomorphisms from $F_\bullet $ to $M$ \cite[Definition 4.1]{scheiderer}. When $G$ is finite, this is exactly the standard definition of Tate cohomology, which in positive degrees agrees with ordinary cohomology. The analogue of that latter fact in the positive virtual dimension case fact is the following comparison result.

\begin{theorem}\cite{scheiderer,symondsres}\label{thm: tate vs cohom}
    Let $G$ be a profinite group with $\mathrm{vcd}(G)=d<\infty$. Suppose that $M$ is either a discrete or compact $\ZZ_p[\![G]\!]$-module. Then the  canonical map $$ H^s(G, M)\rightarrow\tH^s(G, M)$$ is onto for $s\geq d$ and an isomorphism for $s>d$.
\end{theorem}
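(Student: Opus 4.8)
The plan is to compare both cohomology theories against a single \emph{complete resolution} and then reduce the statement to an elementary fact about cochain maps that are isomorphisms in high degrees. Since $G$ satisfies $FP_\infty$ over $\ZZ_p$ and has finite virtual cohomological dimension $d$, the trivial module $\ZZ_p$ admits a complete resolution in the sense of \cite[Definition 4.1]{scheiderer}: an acyclic complex $F_\bullet$ of finitely generated projective $\Gring{G}$-modules, together with an ordinary projective resolution $\varepsilon\colon P_\bullet\to\ZZ_p$ and a chain map $\tau\colon F_\bullet\to P_\bullet$ which is an isomorphism in each degree $i\geq d$. (The sharp threshold $i\geq d$, rather than merely ``in sufficiently high degrees,'' is what ultimately produces surjectivity exactly at $s=d$; it is the profinite analogue of the fact that for a finite group the complete resolution may be taken to agree with $P_\bullet$ in all non-negative degrees, where the norm map accounts for the surjectivity of $H^0\to\tH^0$.) By construction $H^s(G,M)=H^s(\Hom_{\Gring{G}}(P_\bullet,M))$ and, by definition, $\tH^s(G,M)=H^s(\Hom_{\Gring{G}}(F_\bullet,M))$, while the canonical comparison map is the one induced by $\tau$.

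First I would apply the continuous functor $\Hom_{\Gring{G}}(-,M)$ to $\tau$, obtaining a cochain map
\[
\tau^{*}\colon \Hom_{\Gring{G}}(P_\bullet,M)\longrightarrow \Hom_{\Gring{G}}(F_\bullet,M)
\]
which realizes $H^s(G,M)\to\tH^s(G,M)$ on cohomology. Because $\tau_i$ is an isomorphism for every $i\geq d$, the map $\tau^{*}$ is an isomorphism in every cochain degree $s\geq d$. Thus the entire problem reduces to the following purely homological statement: if a cochain map $g\colon A^\bullet\to B^\bullet$ is an isomorphism in all degrees $\geq d$, then the induced map $H^s(g)$ is an isomorphism for $s>d$ and a surjection for $s=d$.

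This last fact is a short diagram chase. For $s>d$ the three consecutive terms $A^{s-1},A^{s},A^{s+1}$ are carried isomorphically onto $B^{s-1},B^{s},B^{s+1}$ compatibly with the differentials (note $s-1\geq d$), so $H^s(g)$ is an isomorphism. For $s=d$, the isomorphisms in degrees $d$ and $d+1$ restrict to an isomorphism $\ker(d_A^{d})\xrightarrow{\ \sim\ }\ker(d_B^{d})$, whereas commutativity of $g$ with the differentials only yields $g^{d}\big(\operatorname{im}d_A^{d-1}\big)\subseteq \operatorname{im}d_B^{d-1}$ (the reverse inclusion can fail because $g^{d-1}$ need not be surjective); passing to the quotient by the possibly larger subgroup downstairs exhibits $H^d(g)$ as a surjection. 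Equivalently, one may argue via the mapping cone of $\tau^{*}$, which is concentrated in degrees $<d$ and whose long exact sequence delivers the same conclusion. Combined with the identifications above, this proves the theorem.

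The step I expect to be the genuine obstacle is not the degree bookkeeping, which is formal, but rather the existence and correct shape of the complete resolution in the profinite setting: constructing $F_\bullet$ out of finitely generated projective $\Gring{G}$-modules, arranging the agreement with $P_\bullet$ precisely in degrees $\geq d$, and verifying that continuous $\Hom$ behaves well and computes both $H^*$ and $\tH^*$ uniformly when $M$ is discrete or compact. These are exactly the points established by Scheiderer and Symonds \cite{scheiderer,symondsres}, where the $FP_\infty$ hypothesis and the topologies on the modules enter in an essential way; once they are in place, the comparison is formal.
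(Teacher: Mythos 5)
The paper gives no proof of this statement---it is quoted directly from Scheiderer and Symonds---and your sketch is precisely the standard argument underlying those references (cf.\ Brown, X.2--3, transported to the profinite setting): compare $\Hom_{\Gring{G}}(P_\bullet,M)$ with $\Hom_{\Gring{G}}(F_\bullet,M)$ via a complete resolution agreeing with $P_\bullet$ in degrees $\geq d$, then run the elementary cochain-level diagram chase, which you carry out correctly (isomorphism on kernels in degree $d$, only an inclusion of images, hence surjectivity at $s=d$ and isomorphism for $s>d$). You also correctly locate the genuine content---the existence of such a complete resolution of finitely generated projectives over $\Gring{G}$ and the good behaviour of continuous $\Hom$ for discrete or compact $M$---in the cited sources, so the proposal is correct and matches the intended proof.
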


\begin{example}
  The natural action of $\GG$ on $\E_*$ makes $\E_*$ a compact $\Gring{\GG}$-module. Restricting to $N$ makes $\E_*$ a compact $\Gring{N}$-module.
\end{example}

\begin{theorem} \cite[Theorem 1.1]{Symonds}\label{symonds}
    There is an isomorphism of bigraded algebras
\begin{equation}\label{eq:TateGN}
\tH^*(\GG, \E_*) \cong \tH^*(N,\E_*) \cong \FF_p[\alpha, \beta^{\pm 1}, \Delta^{\pm 1}]/(\alpha^2) \otimes_{\FF_p} \Lambda(a_0,\dots,a_{n-1}).
\end{equation}
The bidegrees of the generators are given in the following table 
\begin{center}
\begin{tabular}{ l | c | c }
& s & t\\ \hline
$\alpha$ & 1 & $2n$ \\ \hline
$\beta$ & 2 & $2pn$ \\\hline
$\Delta$ & 0 & $2pn^2$\\\hline
$a_i$ & $1$ & $2p^2ni$
\end{tabular}
\end{center}
in which having a bidegree $(s,t)$ corresponds to being an element of $\tH^s(G,\E_t)$.
\end{theorem}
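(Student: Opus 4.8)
The plan is to establish the two displayed isomorphisms of \eqref{eq:TateGN} separately. For the first, $\tH^*(\GG,\E_*)\cong\tH^*(N,\E_*)$, I would invoke the general detection principle for the Farrell--Tate cohomology of a profinite group of finite virtual cohomological dimension whose finite $p$-subgroups are cyclic. Since $\mathrm{vcd}(\GG)=n^2<\infty$ and, at height $n=p-1$, every nontrivial finite $p$-subgroup of $\GG$ is conjugate to $C_p$ with normalizer $N$, the restriction map $\tH^*(\GG,\E_*)\to\tH^*(N,\E_*)$ is an isomorphism; because there is a single conjugacy class of such subgroups, no gluing over distinct normalizers is needed. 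This is precisely the normalizer-detection theorem recalled in the introduction, the strengthening of Henn's $f$-isomorphism theorem.

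For the computation of $\tH^*(N,\E_*)$, I would exploit the normal subgroup $\ZZ_p^n\trianglelefteq N$ with quotient the finite group $F$, which follows from \Cref{prop: extension of N} upon dividing the displayed extension by $\ZZ_p^n$. Because $\ZZ_p^n$ has finite cohomological dimension $n$ while $F$ carries all the $p$-torsion of $N$, the Farrell--Tate cohomology of $N$ is assembled by a Lyndon--Hochschild--Serre-type spectral sequence
\[E_2^{i,j}=\tH^i\!\big(F,\,H^j(\ZZ_p^n,\E_*)\big)\Rightarrow \tH^{i+j}(N,\E_*),\]
in the sense of Scheiderer \cite{scheiderer} (cf. \cite[X.2-3]{brown}), where $\tH^i(F,-)$ is the ordinary Tate cohomology of the finite group $F$ and $H^j(\ZZ_p^n,-)$ is ordinary continuous cohomology. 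Here the coefficients form an exterior algebra: up to the relevant reduction, $H^*(\ZZ_p^n,\E_*)$ is free over $H^0(\ZZ_p^n,\E_*)$ on exterior generators $a_0,\dots,a_{n-1}$, one for each of the distinct one-dimensional $\ZZ_p$-representations into which $\ZZ_p^n$ decomposes under $\Aut(C_p)$ (\Cref{prop: extension of N}(2)).

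It then remains to take $F$-invariants. The finite-group input $\tH^*(F,\E_*)$ is extracted from the Hopkins--Miller computation underlying \Cref{thm: hfpss for E^hF}: since $F$ has cyclic $p$-Sylow subgroup $C_p$, its Tate cohomology is $\beta$-periodic, so inverting $\beta$ in $H^*(F,\E_*)/(\mathrm{tr})$ gives $\tH^*(F,\E_*)\cong\FF_p[\alpha,\beta^{\pm1},\Delta^{\pm1}]/(\alpha^2)$, with the passage from $\FF_{p^n}$ to $\FF_p$ and from $\delta$ to $\Delta=\delta^{n^2}$ effected by $F/C_p$-invariance (including the Galois action). Applying $\tH^*(F,-)$ to the twisted coefficient module, the $i$-th exterior generator lies in a nontrivial $\Aut(C_p)$-eigenspace and survives only after multiplication by the unique monomial in the coefficient classes that cancels its eigenvalue; this is what forces the internal degree $|a_i|=(1,2p^2ni)$ in \Cref{symonds}. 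Finally I would argue that the spectral sequence collapses --- using sparseness (\Cref{prop:sparse}) together with the fact that $\lvert F/C_p\rvert$ is prime to $p$, so that $\tH^i(F,-)$ detects only the invariant isotypic pieces --- and that there are no multiplicative extension problems, the answer being free over the polynomial core $\FF_p[\alpha,\beta^{\pm1},\Delta^{\pm1}]/(\alpha^2)$. This yields $\tH^*(N,\E_*)\cong\tH^*(F,\E_*)\otimes_{\FF_p}\Lambda(a_0,\dots,a_{n-1})$, matching \eqref{eq:FarrellTateintro} and \eqref{eq:TateGN}.

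The main obstacle is the equivariant bookkeeping in this last step: identifying the $\Aut(C_p)$-action on both the Tate cohomology of $C_p$ and the exterior generators, and checking that exactly the claimed invariant classes survive with the stated bidegrees. Getting the eigenvalue matching right is what simultaneously pins down the internal degree $2p^2ni$ of $a_i$ and guarantees the collapse of the spectral sequence; the convergence and multiplicative-structure statements for the Farrell--Tate cohomology of the \emph{infinite} group $N$ (as opposed to the finite $F$) are the technical points requiring the most care.
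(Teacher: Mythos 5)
The paper does not actually prove this statement: it is imported verbatim from Symonds, with \Cref{rem:alphabetaconsistent} only renormalizing the generators ($a_i = x_i\beta^i\Delta^i$) and the proof of \Cref{prop: invert beta in group cohom} recording the spectral sequence $H^*(N/C_p,\tH^*(C_p,\E_*))\Rightarrow\tH^*(N,\E_*)$ that underlies Symonds' argument. Your first step --- reducing $\tH^*(\GG,\E_*)$ to $\tH^*(N,\E_*)$ by normalizer detection for the single conjugacy class of $C_p$'s --- is correct and is exactly the strengthened $f$-isomorphism theorem recalled in the introduction.

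The gap is in your computation of $\tH^*(N,\E_*)$. You filter $N$ by the torsion-free, finite-index normal subgroup $\ZZ_p^n$ with quotient $F$ (a legitimate decomposition, and the associated hypercohomology spectral sequence does exist), but this forces the coefficients of the $F$-Tate cohomology to be $H^*(\ZZ_p^n,\E_*)$, and your assertion that this is free over $H^0(\ZZ_p^n,\E_*)$ on exterior classes $a_0,\dots,a_{n-1}$ is unjustified and false. The module $\E_*$ is large and highly nontrivial over $\ZZ_p^n$; already in the simplest analogue, a copy of $\ZZ_p$ acting on $\ZZ_p[u^{\pm1}]$ through characters has $H^1$ consisting of finite cyclic groups of unbounded order concentrated in internal degrees where $H^0$ vanishes, so the continuous cohomology is nowhere near free over $H^0$. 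The reason Symonds' route works is that the order of operations is reversed: one first takes $C_p$-(Farrell-)Tate cohomology, which collapses the coefficients to the small module $\tH^*(C_p,\E_*)\cong\FF_{p^n}[\alpha,\beta^{\pm1},\delta^{\pm1}]/(\alpha^2)$ on which the residual action of $N/C_p$ is tractable; only then does the exterior algebra appear, as the cohomology of the $\ZZ_p^n$ inside $N/C_p$ with these now essentially trivial coefficients, and the $\Aut(C_p)$-eigenvalue matching you describe is carried out against powers of $\delta$ to pin down the bidegrees, with $|a_i|=(1,2p^2ni)$ emerging only after the renormalization of \Cref{rem:alphabetaconsistent}. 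Your composite-functor spectral sequence in the opposite order is formally available but has an $E_2$-page that cannot be identified by the means you propose, so the argument does not go through as written.
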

\begin{remark}\label{rem:alphabetaconsistent}
The classes $\alpha, \beta, \Delta$ are chosen so that they map to their namesakes in the Tate cohomology of the finite subgroup $F$
\begin{equation}\label{eq:TateF}
\tH^*(F, E_*)  \cong \FF_p[\alpha, \beta^{\pm 1}, \Delta^{\pm 1}]/(\alpha^2).
\end{equation}
    Note that our choice of generators is \emph{different} from Symonds' in \cite[Proposition 2.3]{Symonds}, but is compatible with the Hopkins-Miller computation \Cref{thm: hfpss for E^hF}. We choose the exterior generators $a_i$ to be in one to one correspondence with each copy of $\Sigma^{2p^2ni}\E^{hF}$ in the term $X_1$ of the resolution of $\E^{hN}$ in \Cref{thm: Henntopres}. Our $a_i$'s can be obtained from Symonds' $x_i$'s via multiplication by powers of $\beta \Delta \in \tH^{2}(\GG,\E_{2p^2n}) $. More precisely, for $0\leq i \leq n-1$, we have $a_i = x_i\beta^i \Delta^{i}$.
\end{remark}

\begin{proposition}\label{prop: invert beta in group cohom}
Let $G $ be $N$ or  $\GG$. The natural map $H^*(G,\E_*) \to \tH^*(G,\E_*)$ can be identified with the $\beta$-inversion map
\[\varphi_G: H^*(G,\E_*) \to \beta^{-1}H^*(G,\E_*).\]
In particular, the $\beta$-inverted group cohomology $\beta^{-1}H^*(G,\E_*)$ is isomorphic to the Farrell--Tate cohomology $\tH^*(G,\E_*)$. Thus the map $\varphi_G$ is onto in cohomological dimensions $s \geq \mathrm{vcd}(G)$, and an isomorphism for $s > \mathrm{vcd}(G)$.
\end{proposition}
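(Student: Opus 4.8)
The plan is to exhibit the natural comparison map $\nu\colon H^*(G,\E_*)\to\tH^*(G,\E_*)$ of \Cref{thm: tate vs cohom} as the localization of $H^*(G,\E_*)$ obtained by inverting the class $\beta$, and then to read off the stated onto/isomorphism ranges. First I would record that $\nu$ is a homomorphism of graded-commutative rings (equivalently, a map of $H^*(G,\E_*)$-modules), since the cup products on group cohomology and on Farrell--Tate cohomology are both induced from the same diagonal, and $\nu$ is realized by a chain map from a projective resolution to a complete resolution. Next, by the explicit computation in \Cref{symonds} together with the choice of generators fixed in \Cref{rem:alphabetaconsistent}, the image $\nu(\beta)$ of the group-cohomology class $\beta\in H^2(G,\E_{2pn})$ is the \emph{invertible} Tate class $\beta\in\tH^2(G,\E_{2pn})$. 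Because $\nu$ carries $\beta$ to a unit, the universal property of localization produces a unique factorization $\bar\nu\colon\beta^{-1}H^*(G,\E_*)\to\tH^*(G,\E_*)$ through the localization map $\varphi_G$, and the whole statement reduces to showing that $\bar\nu$ is an isomorphism.

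For surjectivity I would exploit that multiplication by $\beta$ raises cohomological degree by $2$. Given $y\in\tH^s(G,\E_t)$, choose $k$ large enough that $s+2k>d:=\mathrm{vcd}(G)$; then $\beta^k y$ lies in a degree where, by \Cref{thm: tate vs cohom}, $\nu$ is surjective, so $\beta^k y=\nu(x)$ for some $x\in H^{s+2k}(G,\E_*)$, whence $\bar\nu(\beta^{-k}x)=y$. For injectivity, suppose $\bar\nu(\beta^{-k}x)=0$; since $\beta$ is invertible in $\tH^*$, this forces $\nu(x)=0$. Multiplying $x$ by a high power $\beta^m$ pushes it into a degree $>d$ where $\nu$ is injective by \Cref{thm: tate vs cohom}, and $\nu(\beta^m x)=\nu(\beta)^m\nu(x)=0$ gives $\beta^m x=0$ in $H^*(G,\E_*)$; hence $\beta^{-k}x=0$ in the localization. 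Together these show $\bar\nu$ is an isomorphism, identifying $\beta^{-1}H^*(G,\E_*)$ with $\tH^*(G,\E_*)$.

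Finally, under the identification $\bar\nu$ the localization map $\varphi_G$ is carried to $\nu$, so the last assertion of the proposition — that $\varphi_G$ is onto for $s\geq d$ and an isomorphism for $s>d$ — is precisely the content of \Cref{thm: tate vs cohom} transported across $\bar\nu$.

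I expect the main obstacle to lie in the setup of the first paragraph rather than in the localization bookkeeping: one must confirm that the comparison map is genuinely multiplicative in the continuous/compact-module setting, and, crucially, that the group-cohomology class $\beta$ maps to the invertible Tate generator of the same name. This latter compatibility of generators is what ties the computation of Symonds (\Cref{symonds}) to the Hopkins--Miller normalization (\Cref{rem:alphabetaconsistent}); once it is secured, the degree-shifting argument driven by $\deg\beta=2$ does all the remaining work, and no further computation is required.
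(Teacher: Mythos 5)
Your route is genuinely different from the paper's. The paper reduces to $G=N$ via \Cref{symonds} and then traces through Symonds' own argument: it uses the classical identification $\beta^{-1}H^*(C_p,\E_*)\cong\tH^*(C_p,\E_*)$ for the cyclic subgroup, and identifies the Lyndon--Hochschild--Serre spectral sequence for $C_p\trianglelefteq N$ converging to $\tH^*(N,\E_*)$ with the $\beta$-inverted LHS spectral sequence, using that $\beta$ is an $N$-invariant permanent cycle. You instead treat the comparison map $\nu$ of \Cref{thm: tate vs cohom} as a ring map carrying $\beta$ to a unit and deduce the localization statement purely formally by degree-shifting with $\deg\beta=(2,2pn)$. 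The formal part of your argument (the surjectivity and injectivity d\'evissages for $\bar\nu$, and the transport of the onto/iso ranges back across $\bar\nu$) is correct, and it buys a proof that never reopens the structure of $N$.

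However, the step you defer --- that $\nu(\beta)$ is invertible in $\tH^*(G,\E_*)$ --- is a genuine gap as written, not bookkeeping. It cannot be extracted from \Cref{thm: tate vs cohom}, because $\beta$ sits in cohomological degree $2<\mathrm{vcd}(G)$, where $\nu$ is neither injective nor surjective; and it does not follow from nonvanishing alone, because $\tH^2(G,\E_{2pn})$ is not spanned by $\beta$: by \Cref{symonds} it also contains classes such as $a_ia_j\Delta^{(1-p(i+j))/n}$ for $i+j\equiv 1\pmod{n}$. Nor does \Cref{rem:alphabetaconsistent} address the comparison map $H^*\to\tH^*$; it only normalizes generators via restriction to $F$. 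One way to close the gap: by naturality of $\nu$ in the group and the fact that for the finite group $F$ the comparison map is an isomorphism in positive degrees, $\nu_G(\beta)$ restricts to the invertible Hopkins--Miller class $\beta\in\tH^2(F,\E_{2pn})$, so $\nu_G(\beta)^m\neq 0$ for all $m$; since every homogeneous element of $\tH^2(G,\E_{2pn})$ has the form $c\beta+w$ with $w$ in the nilpotent ideal $(a_0,\dots,a_{n-1})$, non-nilpotence forces $c\neq 0$ and hence invertibility. With that inserted, your proof is complete and is a legitimate alternative to the paper's.
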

\begin{proof}
Given Theorem \ref{symonds}, it suffices to prove the case $G=N$. 
The statement follows by tracing through the proof of \cite[Theorem 1.1]{Symonds}. First, note that for the cyclic group $C_p$, the $\beta$-inversion map $\varphi_{C_p}$ identifies $\beta^{-1} H^*(C_p,\E_*) $ with $\tH^*(C_p,\E_*)$; this is classical and the interested reader can find details in loc.cit., for example. As in loc.cit., there is a spectral sequence
\begin{equation}\label{eq:tateSS-beta}
H^*(N/C_p, \tH^*(C_p, \E_*)) \Rightarrow \tH^*(N,\E_*), 
\end{equation}
whose $E_2$-term is now identified with $\beta^{-1}H^*(N/C_p, H^*(C_p,M))$, since $\beta$ is an $N$-invariant permanent cycle. Thus we identify \eqref{eq:tateSS-beta} with the $\beta$-inverted Lyndon-Hochschild-Serre spectral sequence, and the claim follows.
\end{proof}

\section{The \texorpdfstring{$\beta$}{beta}-inverted spectral sequences}\label{sec:betainvert}

The homotopy fixed points spectral sequences for $\E^{hN}$ and $\E^{h\GG}$ are difficult to understand fully or directly. However, we saw in \Cref{symonds} that the Farrell-Tate cohomology, unlike the ordinary continuous cohomology, is readily computable. By \Cref{prop: invert beta in group cohom}, the passage from ordinary to Farrell-Tate cohomology amounts to inverting the class $\beta$. Thus, while we lose information about $\beta$-torsion classes, the Farrell-Tate cohomology retains information about ordinary cohomology classes which are not killed by powers of $\beta$. 

By \Cref{rem:alphabetaconsistent} and \Cref{thm: hfpss for E^hF}, the class $\beta$ detects its namesake in $\pi_*\E^{hF}$, which is well-known to be (up to a unit) the Hurewicz image of the element $\beta_1\in\pi_{2pn-2}(S^0)$; see for example \cite{Ravenel_Arf}. Since $\beta_1$ is nilpotent, its inversion results in the zero spectrum, so in particular $\beta_1^{-1}\E^{hG}$ is contractible. Nonetheless, the $\beta$-inverted homotopy fixed point spectral sequence can be used to determine differentials in the non-$\beta$-inverted one, despite its convergence to zero. The strategy of using $\beta$-inverted or Tate spectral sequence to deduce information is well-established; see for example \cite{stojanoskaDuality}. Our argument is inspired by the analogous one at $p=3$ in \cite[Section 4.2]{ghmrpicard}, where the $\beta$-inverted homotopy fixed point spectral sequence and the resolution spectral sequence are played off against each other.

The exterior classes in the Farrell-Tate cohomology in \Cref{symonds} are closely related to the generators (i.e. shifted units of the summands) of the spectra $X_i$ in the resolution of $\E^{hN}$ in \Cref{thm: Henntopres}; see \Cref{rem: ext gens}. Understanding their behavior in the homotopy fixed point spectral sequence will be done by comparison to their behavior in the resolution spectral sequence \eqref{ss: resolutionSS}. We will review how such comparisons can be done in the first subsection, and then we pass to inverting $\beta$ in the next.

\subsection{Combining the two towers for \texorpdfstring{$\E^{hN}$}{E\^hN} and \texorpdfstring{$\E^{h\GG}$}{E\^hG}}
Combining Henn's finite resolutions of  $\E^{hN}$ and $\E^{h\GG}$ from \Cref{thm: Henntopres} with their Adams-Novikov resolutions as in \Cref{EASS}, results in squares of spectral sequences, which will help us relate their differentials. This was done in \cite{beaudry2022chromatic}, see the construction on page 401 of op.cit. Since we will further invert $\beta$ in this square of spectral sequences, we review the construction.

\begin{construction}\label{constr: bires for E^hG}\label{rem: exterior bires} 
    Let $G$ be one of $N $ or $\GG$, and let $F_\bullet(G)$ denote the tower
    $$  F_\bullet(G) \to \E^{hG} $$
    constructed as in \Cref{rmk: towersseq} from the corresponding finite resolution in \Cref{thm: Henntopres}. For each $F_r(G)$, let $  F_{r,\bullet}(G) \to F_r(G)$ denote its canonical $\E$-based $\Kn$-local Adams-Novikov tower, as in \eqref{eq:Ebar-res}. Naturality of the Adams-Novikov tower makes $F_{\bullet,\bullet}(G)$ into what could be called a double filtration of spectra.  

    This double filtration gives rise to a square of spectral sequences, by filtering in the two different directions, following the procedure 
    in \cite[Section 3]{miller}. We can first take the $E_1$-terms in the finite resolution direction. For specificity, note these are the $X_i$'s (resp. $Z_i$'s) in the resolution from \Cref{thm: Henntopres}, and the corresponding $d_1$-differentials are the maps in these resolutions. Naturality of Adams-Novikov towers now implies that we get maps between the Adams-Novikov towers of these $E_1$-terms. Thus the Adams-Novikov differentials will also commute with the finite resolution $d_1$-differentials. Altogether, taking first the resolution direction $E_1$-page, and then the Adams-Novikov $E_2$-page, yields
    \begin{equation}\label{eq:startingterms}
    \Lambda(a_0,\dots,a_{n-1})\otimes H^*(F,\E_*)
    \end{equation}
    in the case of $G=N$, due to \Cref{rem: ext gens} and \Cref{thm: EASS}. In the case of $G=\GG$, there are additional summands coming from the spectra $V_r$ in part (2) of \Cref{thm: Henntopres for G}.
    
    The expression \eqref{eq:startingterms} is the beginning of two spectral sequences: assembling it in the Adams-Novikov filtration gives rise to the homotopy fixed point spectral sequence, whereas assembling it in the finite resolution direction gives an algebraic resolution spectral sequence. The abutments of these spectral sequences in turn are the starting pages of two more spectral sequences. 

    Altogether, in the case of $G=N$, this becomes
\begin{equation}\label{fig: sqaure of sseq for E^hN}
    \begin{tikzcd}
        H^*(N,\E_*)\ar[r,-{Implies},double,"\mathrm{HFP}"]&\pi_*(\E^{hN})\\
         \Lambda(a_0,\ldots,a_{n-1})\otimes H^*(F,\E_*)\ar[u,-{Implies},double,""]\ar[r,-{Implies},double,"\mathrm{HFP}"]& \Lambda(a_0,\ldots,a_{n-1})\otimes \pi_*(\E^{hF}).\ar[u,-{Implies},double,"",swap]
    \end{tikzcd}    
\end{equation}
The horizontal arrows denote the homotopy fixed points spectral sequences arising from the Adams-Novikov tower direction, while the vertical ones arise from the finite resolution in \Cref{constr: bires for E^hG}.

There is a similar square of spectral sequences for $\GG$ instead of $N$, but its starting corner is slightly more mysterious, due to the $V_r$ summands in part (2) of \Cref{thm: Henntopres}. Indeed, we have the following square
\begin{equation}\label{fig: sqaure of sseq for E^hG}
    \begin{tikzcd}
        H^*(\GG,\E_*)\ar[r,-{Implies},double,"\mathrm{HFP}"]&\pi_*(\E^{h\GG})\\
         H^*(F,\E_*)\otimes\Lambda(a_0,\ldots,a_{n-1})\ar[u,-{Implies},double,""]\ar[r,-{Implies},double,"\mathrm{HFP}"]&\pi_*(\E^{hF})\otimes\Lambda(a_0,\ldots,a_{n-1})\ar[u,-{Implies},double,"",swap]\\[-22pt]
         \oplus H^0(\GG,\bigoplus_{r=0}^m \Hom(Q_r,\E_*))&\oplus H^0(\GG,\bigoplus_{r=0}^m \Hom(Q_r,\E_*))
    \end{tikzcd},
\end{equation}
where each $Q_r$ is a finitely generated projective $\Gring{\GG}$-module related to the $V_r$ from \Cref{thm: Henntopres for G}. Our $Q_r$ is denoted by $Q_r'$ in \cite[Theorem 26]{henn}.
\end{construction}

\begin{remark}\label{rem: hfpss splits}
    In both \eqref{fig: sqaure of sseq for E^hN} and \eqref{fig: sqaure of sseq for E^hG}, the exterior algebra consists of permanent cycles for the bottom horizontal spectral sequence almost tautologically, due to the naturality of the Adams-Novikov differentials.
\end{remark}

\subsection{The \texorpdfstring{$\beta$}{beta}-inverted homotopy fixed point spectral sequences}\label{subsec: invert beta}
Now we turn to studying what happens after inverting $\beta$ in the two squares of spectral sequences \eqref{fig: sqaure of sseq for E^hN} and \eqref{fig: sqaure of sseq for E^hG}. While the $\beta$-inverted homotopy fixed point spectral sequence for the finite group $F$ has a solid footing as a Tate spectral sequence \cite{GreenleesMay}, we explain a construction of the others before working with them. 

As discussed above, the element $\beta_1 \in \pi_{2pn - 2} S^0$ is detected by a cohomology class \[\beta \in E_2^{2,2pn}(\GG,\E)\cong H^2 (\GG,\E_{2pn}).\] 
Let $\tilde\beta$ be a lift of $\beta$ to the $E_1$-page of the $\Kn$-local Adams-Novikov spectral sequence constructed from the $\Kn$-localized tower $T_\bullet(S^0) = T_\bullet^{\E}(S^0) \to S^0$ in \eqref{eq:Ebar-res}.  Since $\tilde\beta$ is a permanent cycle, it defines a map $\tilde\beta :S^{2pn} \to T_2(S^0)= \barE^{\otimes 3}$. 

For any $X $ then, the ``inclusion'' $\barE\to S^0$ allows us to extend $\tilde\beta$ to a map of Adams-Novikov towers
\[ \tilde\beta: T_\bullet (X) \to T_{\bullet+2}(X),\]
where we have suppressed the shift of internal degree from the notation.  The $\beta$-localized Adams-Novikov tower of $X$ is the colimit
\[\beta^{-1}T_\bullet(X) = \colim_{\tilde\beta}T_\bullet (X).   \]
It is a $\ZZ$-indexed diagram of spectra, which is natural in $X$, and which gives rise to a four-quadrant spectral sequence whose $E_2$-page is the $\beta$-inverted $E_2$-page arising from $T_\bullet(X)$. Furthermore, the localization map $T_\bullet(X) \to \beta^{-1}T_\bullet(X)$ gives rise to a map of spectral sequences with corresponding multiplicative properties.

While the convergence properties of whole plane spectral sequences are generally tricky (see \cite[Section 8]{boardmanConverges}), the diagram $\beta^{-1}T_\bullet(X)$ gives rise to a conditionally convergent spectral sequence since $T_\bullet(X)$ does. In our case, some power of the cohomology class $\beta$ is a target of an Adams-Novikov differential in the spectral sequence for the sphere. This implies that the $\beta$-localized spectral sequence for any $X$ will collapse to zero at a finite stage, and strong convergence follows from \cite[Theorem 8.10]{boardmanConverges}.

Now consider the ``double complex'' $F_{\bullet,\bullet}(G)$ from \Cref{constr: bires for E^hG}. It can also be viewed as a diagram of Adams-Novikov towers
\[ T_\bullet(F_d(G)) \to \dots \to T_\bullet(F_1(G))\to T_\bullet(F_0(G)), \]
and hence it gives rise to a diagram of $\beta$-inverted towers
\[ \beta^{-1} T_\bullet(F_d(G)) \to \dots \to \beta^{-1} T_\bullet(F_1(G))\to \beta^{-1} T_\bullet(F_0(G)).\]
This diagram, in turn, gives rise to a square of spectral sequences as in \eqref{fig: sqaure of sseq for E^hN} and \eqref{fig: sqaure of sseq for E^hG} above, and we record the corresponding result as follows.
 
\begin{theorem}\label{thm: beta-inverted square}
    For $G=N$ and $\GG$, there exists a commutative square of strongly convergent spectral sequences 
\begin{equation}\label{eq:diagram}
    \begin{tikzcd}
       \beta^{-1}H^*(G,\E_*)\cong \widehat H^*(G,\E_*)\ar[r,-{Implies},double,"\beta^{-1}\mathrm{HFP}"]&\pi_*(\beta_1^{-1}\E^{hG})=0\\
        \widehat H^*(F,E_*)\otimes\Lambda(a_0,\ldots,a_{n-1})\ar[u,-{Implies},double,"\beta^{-1}\mathrm{Alg}"]\ar[r,-{Implies},double,"\beta^{-1}\mathrm{HFP}"]&\pi_*(\beta_1^{-1}\E^{hF})\otimes\Lambda(a_0,\ldots,a_{n-1})=0\ar[u,-{Implies},double,"",swap]
    \end{tikzcd}.
\end{equation}
\end{theorem}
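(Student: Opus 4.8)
The plan is to produce the square by applying the $\beta$-localization construction just described, objectwise, to the double filtration $F_{\bullet,\bullet}(G)$ of \Cref{constr: bires for E^hG}, and then to verify the identification of the four corners, the strong convergence of each spectral sequence, and the commutativity of the square. First I would construct the four spectral sequences: viewing $F_{\bullet,\bullet}(G)$ as a diagram of Adams--Novikov towers $T_\bullet(F_d(G))\to\cdots\to T_\bullet(F_0(G))$ and applying the functorial localization $\beta^{-1}T_\bullet(-)=\colim_{\tilde\beta}T_\bullet(-)$ objectwise yields a diagram of $\beta$-inverted towers, and filtering in the two available directions exactly as in the unlocalized squares \eqref{fig: sqaure of sseq for E^hN} and \eqref{fig: sqaure of sseq for E^hG} produces the horizontal $\beta^{-1}\mathrm{HFP}$ and vertical $\beta^{-1}\mathrm{Alg}$ spectral sequences. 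The corners are then identified as follows: the top-left $E_2$-term $\beta^{-1}H^*(G,\E_*)$ is $\tH^*(G,\E_*)$ by \Cref{prop: invert beta in group cohom}; the bottom-left term is the $\beta$-inversion of the starting term \eqref{eq:startingterms}, and since $\beta$ is a permanent cycle in the resolution direction (\Cref{rem: hfpss splits}) and localization commutes with the tensor decomposition of \Cref{rem: ext gens}, it is $\tH^*(F,\E_*)\otimes\Lambda(a_0,\ldots,a_{n-1})$. Both right-hand abutments vanish because $\beta_1$ is nilpotent, so $\beta_1^{-1}\E^{hG}\simeq *$ and $\beta_1^{-1}\E^{hF}\simeq *$.

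The crux is strong convergence. Each unlocalized tower $T_\bullet(F_r(G))$ is conditionally convergent by \Cref{thm: EASS}, and conditional convergence is inherited by the filtered colimit defining $\beta^{-1}T_\bullet$. The decisive input is that some power of $\beta$ is the target of an Adams--Novikov differential for the sphere---equivalently, that $\beta_1$ is nilpotent. This forces the $\beta$-localized spectral sequence to collapse to zero at a finite page, so that both its $E_\infty$-term and its derived $E_\infty$-term vanish, and strong convergence to zero then follows from Boardman's criterion \cite[Theorem 8.10]{boardmanConverges}. In the finite-resolution direction the filtration is of finite length by \Cref{thm: Henntopres}, so convergence there is automatic.

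Finally, commutativity of the square follows from naturality: the unlocalized square commutes by Miller's comparison of the two filtrations \cite[Section 3]{miller}, as recalled in \Cref{constr: bires for E^hG}, and since $\beta$-localization is an objectwise functorial colimit, the localization map of towers induces a map from the unlocalized square to the $\beta$-inverted one carrying all of the structure along. The main obstacle, as anticipated, is exactly the convergence of these whole-plane $\beta$-inverted spectral sequences, where ordinary convergence theory does not apply directly; once the finite-page collapse is in hand, the remaining identifications are formal consequences of \Cref{prop: invert beta in group cohom}, \Cref{rem: ext gens}, and the nilpotence of $\beta_1$.
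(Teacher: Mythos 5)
Your proposal follows essentially the same route as the paper: the square itself, its commutativity, and the strong convergence argument (conditional convergence inherited from the unlocalized towers, finite-page collapse because a power of $\beta$ is hit by an Adams--Novikov differential, then Boardman's \cite[Theorem 8.10]{boardmanConverges}) are all set up exactly as in the construction preceding the theorem, and the identification of the top-left corner via \Cref{prop: invert beta in group cohom} and of the abutments via nilpotence of $\beta_1$ matches the paper's proof.

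There is, however, one omission, and it is the only substantive content of the paper's actual proof. For $G=\GG$ the bottom-left corner of the unlocalized square \eqref{fig: sqaure of sseq for E^hG} is \emph{not} just the term \eqref{eq:startingterms}: by \Cref{thm: Henntopres for G}(2) the resolution $Z_\bullet$ contains the extra summands $V_r$, contributing $H^0\bigl(\GG,\bigoplus_{r=0}^m \Hom(Q_r,\E_*)\bigr)$ to the starting page. Your identification of the $\beta$-inverted bottom-left corner with $\tH^*(F,\E_*)\otimes\Lambda(a_0,\ldots,a_{n-1})$ silently discards these summands. They do in fact vanish after inverting $\beta$, but this requires an argument: since the $Q_r$ are finitely generated projective $\Gring{\GG}$-modules, their contribution is concentrated in cohomological degree $0$, whereas $\beta$ has cohomological degree $2$; hence every class in these summands is $\beta$-torsion and dies in the localization. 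Adding this one sentence (which is precisely what the paper's proof supplies) closes the gap; the $G=N$ case, where no $V_r$ terms appear, is complete as you wrote it.
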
 
\begin{proof}
    The only thing that remains is to identify the terms. First, note that when $G=\GG$, the contributions from the projective modules $Q_r$ in \eqref{fig: sqaure of sseq for E^hG} are killed after $\beta$-inversion since $\beta$ is in positive cohomological dimension.
    Now apply \Cref{prop: invert beta in group cohom} to identify the $\beta$-inverted ordinary group cohomology with the Farrell-Tate cohomology.
\end{proof}

\begin{remark}\label{rem:bottom beta inverted}
In the $\beta$-inverted Adams-Novikov spectral sequence along the bottom of \eqref{eq:diagram}, the exterior algebra $\Lambda(a_0,\dots,a_{n-1})$ consists of permanent cycles by \Cref{rem: hfpss splits}, and thus the spectral sequence can be thought of as the $\Lambda(a_0,\dots,a_{n-1})$-tensored $\beta$-inverted $F$-homotopy fixed point spectral sequence. The latter is identified with the Tate spectral sequence for the action of $F$ on $\E$, and it appears in Heard's thesis \cite{heard2014thesis} and in his preprint \cite{drewTate}. We record it here as a corollary of the Hopkins-Miller computation from \Cref{thm: hfpss for E^hF}. 
\end{remark}

\begin{proposition}\label{prop: tate for E^hF}
    The $\beta$-inverted homotopy fixed point spectral sequence for $\E^{hF}$ takes the form
    \begin{equation}\label{sseq: tate for F}
        \beta^{-1}E_2^{s,t}(F,\E)=\tH^s(F;\E_t)\cong \FF_p[\alpha,\beta^{\pm 1},\Delta^{\pm 1}]/(\alpha^2)\Rightarrow\pi_{t-s}(\beta_1^{-1}\E^{hF})=0.
    \end{equation}
    Its differentials are multiplicatively generated by the formulas in \eqref{eq:hfpss differentials}, and the spectral sequence collapses to zero on the $E_{2n^2+2}$-page.
\end{proposition}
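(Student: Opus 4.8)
The plan is to obtain this spectral sequence by applying the exact functor $\beta^{-1}(-)$ pagewise to the Hopkins--Miller homotopy fixed point spectral sequence of \Cref{thm: hfpss for E^hF}. Recall from the discussion preceding \Cref{thm: beta-inverted square} that the $\beta$-inverted spectral sequence is the one associated to the $\beta$-localized Adams--Novikov tower $\beta^{-1}T_\bullet(\E^{hF}) = \colim_{\tilde\beta} T_\bullet(\E^{hF})$, and that the localization map $T_\bullet(\E^{hF}) \to \beta^{-1}T_\bullet(\E^{hF})$ induces a map of multiplicative spectral sequences. Since inverting $\beta$ is a filtered colimit, it is exact and hence commutes with passage to homology; therefore the $r$-th page of the $\beta$-inverted spectral sequence is precisely $\beta^{-1}E_r^{s,t}(F,\E)$, with differential $\beta^{-1}d_r$. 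In particular the $E_2$-page is $\beta^{-1}H^*(F,\E_*)$, which the identification of Tate cohomology with $\beta$-inverted cohomology (compare \Cref{prop: invert beta in group cohom} and \Cref{rem:bottom beta inverted}) rewrites as $\tH^*(F,\E_*) \cong \FF_p[\alpha,\beta^{\pm 1},\Delta^{\pm 1}]/(\alpha^2)$, as in \eqref{eq:TateF}. Concretely, the transfer image that obstructs a clean description of $H^*(F,\E_*)$ is $\beta$-power torsion and is killed by the localization.

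Next I would record the differentials. Because $\beta$ is a permanent cycle that becomes a unit after localization, the Leibniz rule forces $\beta^{-1}$ to be a permanent cycle as well, so every differential is determined $\beta^{\pm 1}$-linearly by its values on $\alpha$ and $\Delta$. These are exactly the images of the generating differentials \eqref{eq:hfpss differentials}, namely $d_{2n+1}(\Delta)=\alpha\beta^{n}$ and $d_{2n^2+1}(\Delta^n\alpha)=\beta^{n^2+1}$ up to units, which yields the claimed multiplicative description of the differentials.

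For the collapse and vanishing, I would argue as follows. By \Cref{thm: hfpss for E^hF} the non-inverted spectral sequence satisfies $E_\infty(F,\E)=E_{2n^2+2}(F,\E)$, and the differential $d_{2n^2+1}(\Delta^n\alpha)=\beta^{n^2+1}$ shows that $\beta^{n^2+1}=0$ in the ring $E_{2n^2+2}(F,\E)$. Hence multiplication by $\beta$ is nilpotent on this page, so its localization $\beta^{-1}E_{2n^2+2}(F,\E)$ vanishes. By the pagewise exactness above, the $\beta$-inverted spectral sequence therefore has trivial $E_{2n^2+2}$-page, i.e.\ it collapses to zero at $E_{2n^2+2}$. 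This is consistent with, and independently confirmed by, the abutment: since $\beta_1$ is nilpotent (as recalled at the start of this section), multiplication by $\beta_1$ is a nilpotent self-map of $\E^{hF}$, so $\beta_1^{-1}\E^{hF}\simeq *$ and the spectral sequence converges to zero.

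I expect the only genuinely delicate points to be bookkeeping ones. First, one must check that the abstractly constructed $\beta$-localized tower really does produce the Greenlees--May Tate spectral sequence for the action of $F$ on $\E$ from \cite{GreenleesMay}, so that the identification of the $E_2$-page with $\tH^*(F,\E_*)$ is the intended one and carries the correct multiplicative structure. Second, one must confirm strong convergence of what is a priori a whole-plane spectral sequence. Both of these are already handled in the setup of \Cref{thm: beta-inverted square} and the surrounding discussion; once they are in place, the argument is purely formal, since every nontrivial computational input is supplied by the Hopkins--Miller computation \Cref{thm: hfpss for E^hF}.
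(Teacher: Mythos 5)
Your proposal is correct and follows essentially the same route as the paper: identify the localized $E_2$-page with $\tH^*(F,\E_*)\cong\FF_p[\alpha,\beta^{\pm1},\Delta^{\pm1}]/(\alpha^2)$ using that the transfer classes are $\beta$-power torsion, import the Hopkins--Miller differentials \eqref{eq:hfpss differentials}, and conclude collapse at $E_{2n^2+2}$. Your phrasing of the last step ($\beta^{n^2+1}=0$ on $E_{2n^2+2}(F,\E)$, so localization kills the page) is equivalent to the paper's observation that $d_{2n^2+1}(\alpha\beta^{-1-n^2}\Delta^n)=1$ hits the unit.
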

\begin{proof}
     Since $\beta_1$ is detected by $\beta$ on the $E_2$-page, inverting $\beta_1$ in the homotopy fixed points spectral sequence  \[E^{s,t}_2(F,\E)=H^s(F,\E_t)\Rightarrow\pi_{t-s}(\E^{hF})\] is the same as inverting  the element $\beta$ on the $E_2$-page, which is  $\FF_p[\alpha,\beta,\Delta^{\pm 1}]/(\alpha^2)$ modulo transfer elements. 
     It follows from \Cref{thm: hfpss for E^hF} and the fact that transfer elements are $\beta$-torsion that the $\beta$-inverted homotopy fixed points spectral sequence has $E_2$-page $\FF_p[\alpha,\beta^{\pm 1},\Delta^{\pm 1}]/(\alpha^2)$. 
     The formulas in \eqref{eq:hfpss differentials} along with the Leibniz rule imply that the spectral sequence collapses to 0 on the $E_{2n^2+2}$-page, since the unit is hit by a differential; namely $d_{2n^2+1} (\alpha \beta^{-1-n^2}\Delta^n) = 1$.
\end{proof}

\begin{remark}\label{rem: all differentials}
    Note that $\alpha$, $\beta$, and $\Delta^p$ are permanent cycles, and we can give explicit formulas for all differentials in the spectral sequence \eqref{sseq: tate for F}. Namely, we have that $d_{2n+1}(\beta^m\Delta^k) = k \alpha \beta^{m+n} \Delta^{k-1}$, which is non-zero if and only if $k$ is not divisible by $p$, while all the $\alpha$-multiples are $(2n+1)$-cycles since $\alpha^2=0$. Thus, the $E_{2n+2}$ page is generated by classes of form $\beta^m\Delta^{pk}$ and $\alpha \beta^m\Delta^{n+pl}$, for some integers $m,k,l$. Then $d_{2n^2+1}(\alpha\beta^m \Delta^{n+pk} ) = \beta^{m+n^2+1} \Delta^{pk}$ wipes away all the classes on $E_{2n^2+1}$.
\end{remark}

Next, we investigate the spectral sequence $\beta^{-1}\mathrm{Alg}$ in square (\ref{eq:diagram}).

\begin{proposition}\label{prop:a_i permanent}
    The left vertical spectral sequence $\beta^{-1}\mathrm{Alg}$ in diagram \eqref{eq:diagram} has no non-trivial differentials.
\end{proposition}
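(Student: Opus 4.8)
The plan is to prove degeneration by a dimension count, playing the completely understood input page of $\beta^{-1}\mathrm{Alg}$ against its equally well understood abutment. The moral point is that Henn's resolution and Symonds' computation produce the \emph{same} answer, so the spectral sequence relating them has no room to run.

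First I would identify the two ends of the spectral sequence. By construction its input page is the bottom-left corner of \eqref{eq:diagram}, namely $\widehat H^*(F,\E_*)\otimes\Lambda(a_0,\ldots,a_{n-1})$, which by \eqref{eq:TateF} (equivalently \Cref{prop: tate for E^hF}) equals $\FF_p[\alpha,\beta^{\pm 1},\Delta^{\pm 1}]/(\alpha^2)\otimes_{\FF_p}\Lambda(a_0,\ldots,a_{n-1})$. For $G=\GG$ the extra summands coming from the projective modules $Q_r$ have already been discarded upon inverting $\beta$, so the input is the same in both cases. Its abutment is the top-left corner $\widehat H^*(G,\E_*)$, which by \Cref{symonds} is \emph{also} isomorphic to $\FF_p[\alpha,\beta^{\pm 1},\Delta^{\pm 1}]/(\alpha^2)\otimes_{\FF_p}\Lambda(a_0,\ldots,a_{n-1})$, for both $G=N$ and $G=\GG$. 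The key point is that the normalization of the exterior generators in \Cref{rem:alphabetaconsistent} makes these two identifications compatible with the internal bigrading: a class $x\otimes a_{i_1}\cdots a_{i_r}$ of the input, with $x\in\widehat H^{s'}(F,\E_{t'})$, sits in bidegree $\bigl(s'+r,\ t'+2p^2n(i_1+\cdots+i_r)\bigr)$, matching the bidegree of the corresponding monomial in the abutment. A quick monomial count shows that each bidegree is finite-dimensional.

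The dimension count then finishes the argument. Fix a bidegree $(s,t)$, and let $f$ denote the resolution (exterior) filtration degree, which is what the differentials of $\beta^{-1}\mathrm{Alg}$ raise. Strong convergence from \Cref{thm: beta-inverted square} identifies $\bigoplus_f E_\infty^{f,s,t}$ with the associated graded of $\widehat H^s(G,\E_t)$, whence
\[
\sum_f \dim_{\FF_p}E_\infty^{f,s,t}=\dim_{\FF_p}\widehat H^s(G,\E_t)=\dim_{\FF_p}\bigl(\widehat H^*(F,\E_*)\otimes\Lambda\bigr)^{s,t}=\sum_f\dim_{\FF_p}E_1^{f,s,t},
\]
where the middle equality uses the bidegree-preserving isomorphism between abutment and input. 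Since $E_\infty^{f,s,t}$ is a subquotient of $E_1^{f,s,t}$ for each $f$, the equality of the two sums forces $\dim E_\infty^{f,s,t}=\dim E_1^{f,s,t}$ for every $f$. As the dimension at a fixed tri-degree is non-increasing along the pages while its two endpoints coincide, no page can support a non-trivial differential.

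The only genuine obstacle is the bookkeeping in the second paragraph: the comparison must be an isomorphism of \emph{bigraded} vector spaces, not merely an abstract one, so that the count can be run one bidegree at a time. This is precisely what the choice $a_i=x_i\beta^i\Delta^i$ of \Cref{rem:alphabetaconsistent} secures, by forcing the exterior generators of Henn's resolution to land on Symonds' generators of $\tH^*(G,\E_*)$ in the correct bidegrees. With that compatibility in hand, the remainder of the argument is purely formal.
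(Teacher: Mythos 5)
Your proposal is correct and is essentially the paper's own argument: the paper also observes that by \Cref{symonds} the $E_2$-page and the abutment of $\beta^{-1}\mathrm{Alg}$ agree, so any non-trivial differential would contradict that identification. You have merely spelled out the bidegree-by-bidegree dimension count (and the role of \Cref{rem:alphabetaconsistent} in making the comparison bigraded) that the paper leaves implicit.
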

\begin{proof}
   This follows from \Cref{symonds}. There is a clear bijection between the $E_2$-page and the $E_\infty$-page, and any non-zero differentials would contradict \Cref{symonds}.  
\end{proof}

As a corollary of \Cref{rem:bottom beta inverted} and \Cref{prop:a_i permanent}, we conclude that all higher differentials in the $\beta$-inverted spectral sequence
\[\beta^{-1}E^{s,t}_2(G,\E)= \beta^{-1} H^*(G, \E_*)=\tH^s(G,\E_t)\Rightarrow\pi_{t-s}(\beta_1^{-1}\E^{hG})\]
i.e. the top horizontal spectral sequence in \eqref{eq:diagram}, 
come from the Tate spectral sequence from \eqref{prop: tate for E^hF}, i.e. the bottom horizontal spectral sequence in \eqref{eq:diagram}. We record this conclusion as the following result.

\begin{corollary}\label{cor: tate splits}\label{thm: tate splits}
    Let $G$ be one of $N$ or $\GG$. The $\beta$-inverted homotopy fixed points spectral sequence 
    \[\beta^{-1}E^{s,t}_2(G,\E)=\tH^s(G,\E_t)\Rightarrow\pi_{t-s}(\beta_1^{-1}\E^{hG})=0\] 
    splits as a direct sum of shifts of the Tate spectral sequence (\ref{sseq: tate for F}) for $\beta_1^{-1}\E^{hF}$ indexed over the monomial basis of the exterior algebra $\Lambda(a_0,\ldots,a_{n-1})$. 

In particular, the spectral sequence collapses to zero on the $E_{2n^2+2}$ page.
\end{corollary}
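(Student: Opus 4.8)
The plan is to prove \Cref{cor: tate splits} by assembling the three spectral sequences appearing in the square \eqref{eq:diagram} and reading off the top horizontal one from the other two. The key structural input is the square of strongly convergent spectral sequences from \Cref{thm: beta-inverted square}, together with the two computations we have already made about its constituent spectral sequences: \Cref{prop: tate for E^hF} identifies the bottom horizontal spectral sequence ($\beta^{-1}\mathrm{HFP}$ for $F$), and \Cref{prop:a_i permanent} shows the left vertical spectral sequence ($\beta^{-1}\mathrm{Alg}$) has no nontrivial differentials.

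First I would invoke \Cref{prop:a_i permanent} to conclude that, in the left vertical spectral sequence, the starting corner $\widehat H^*(F,\E_*)\otimes \Lambda(a_0,\dots,a_{n-1})$ already equals its $E_\infty$-page, so the abutment $\beta^{-1}H^*(G,\E_*)=\tH^*(G,\E_*)$ is identified \emph{additively} with the tensor product, exactly as in the isomorphism \eqref{eq:TateGN} of \Cref{symonds}. In particular the exterior generators $a_i$ survive and the entire $E_2$-page of the top horizontal spectral sequence is a free module over the Tate cohomology $\widehat H^*(F,\E_*)$ on the monomial basis of $\Lambda(a_0,\dots,a_{n-1})$. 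Next I would use \Cref{rem:bottom beta inverted}: by \Cref{rem: hfpss splits} the exterior classes are permanent cycles for the bottom horizontal spectral sequence, so that spectral sequence is literally $\Lambda(a_0,\dots,a_{n-1})$ copies (suitably shifted) of the Tate spectral sequence for $\E^{hF}$ from \Cref{prop: tate for E^hF}. The commutativity of the square then forces every differential in the top horizontal spectral sequence to be the image, under the left vertical identification, of a differential in the bottom one; since the bottom one splits as shifts of the $F$-Tate spectral sequence, so does the top.

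The main obstacle, and the step requiring the most care, is verifying that the differentials in the top horizontal spectral sequence are genuinely \emph{detected} by those in the bottom one, i.e. that no extra differentials can appear after passing up the left vertical spectral sequence. The formal mechanism is the standard ``square of spectral sequences'' comparison in the style of \cite[Section 3]{miller}: because the $a_i$ are permanent cycles in both the vertical and the bottom horizontal directions (by \Cref{prop:a_i permanent} and \Cref{rem: hfpss splits}), the multiplicative structure propagates the $F$-differentials $\beta^{-1}$-equivariantly across each exterior summand, and the Leibniz rule applied to monomials $a_{i_1}\cdots a_{i_r}\cdot \xi$ with $\xi\in\widehat H^*(F,\E_*)$ reduces every differential to the known $F$-differentials on $\xi$. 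I would spell this out by noting that a class $x$ in $\tH^*(G,\E_*)$ supporting a differential $d_r(x)$ must, by freeness over $\widehat H^*(F,\E_*)$ and the fact that the $a_i$ are cycles, have $d_r(x)$ equal to the corresponding sum of $F$-Tate differentials tensored with the exterior monomials.

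Finally, the collapse statement follows immediately from the splitting: since each summand is a shift of the spectral sequence in \Cref{prop: tate for E^hF}, which collapses to zero on the $E_{2n^2+2}$-page (the unit being hit by $d_{2n^2+1}(\alpha\beta^{-1-n^2}\Delta^n)=1$), the direct sum likewise collapses to zero on the $E_{2n^2+2}$-page. This is of course consistent with the abutment $\pi_*(\beta_1^{-1}\E^{hG})=0$ recorded in \eqref{eq:diagram}, providing a sanity check on strong convergence.
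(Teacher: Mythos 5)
Your proposal follows essentially the same route as the paper, which deduces the splitting by combining \Cref{rem:bottom beta inverted} (the bottom horizontal spectral sequence is the $\Lambda(a_0,\dots,a_{n-1})$-indexed sum of shifted $F$-Tate spectral sequences) with \Cref{prop:a_i permanent} (degeneration of the left vertical spectral sequence) inside the square of \Cref{thm: beta-inverted square}, so that all differentials in the top horizontal spectral sequence are imported from the bottom one. One small caution: your appeal to the Leibniz rule on monomials $a_{i_1}\cdots a_{i_r}\cdot\xi$ should be phrased in the multiplicative $\beta$-inverted homotopy fixed point spectral sequence itself rather than in the resolution, since (as noted in \Cref{rem: ext gens}) the finite resolution is not multiplicative and the exterior decomposition is only additive; this does not affect the validity of the argument, whose real engine is the square comparison you already invoke.
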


\section{Differentials detected by the \texorpdfstring{$\beta$}{beta}-inverted homotopy fixed point spectral sequence}
Let $n=p-1$, and let $G$ denote either $\GG$ or $N$. In this section, we use \Cref{thm: tate splits} to deduce 
differentials in the homotopy fixed points spectral sequence $E_*^{*,*}(G,\E)$ \eqref{eq:hfpss-S0} above the virtual cohomological dimension of $G$. This will lead to a horizontal vanishing line on the $E_{2n^2+2}$-page in \Cref{cor: vanishing line}. In \Cref{prop: degconsideration in tate}, we give a more detailed analysis of some classes that are relevant for the computation of the exotic Picard groups in \Cref{sec:boundFiltration}.

To do this, we consider the map of homotopy fixed point spectral sequences
  \begin{equation}\label{eq:diagram-varphi}
    \begin{tikzcd}[column sep=large]
       E^{*,*}_2(G,\E)= H^*(G,\E_*)\ar[r,-{Implies},double,"\mathrm{HFP}"]\ar[d, "\varphi"]&\pi_*(\E^{hG})\ar[d]\\
          \beta^{-1}E^{*,*}_2(G,\E)= \beta^{-1}H^*(G,\E_*)\cong\tH^*(G,\E_*)\ar[r,-{Implies},double,"\beta^{-1}\mathrm{HFP}"]&\pi_*(\beta_1^{-1}\E^{hG}).
    \end{tikzcd}
\end{equation}

The map $\varphi$ is an isomorphism on the $E_2$-page in cohomological dimensions $s>d=\mathrm{vcd}(G)$. To control the isomorphism range in further pages of these spectral sequences, we will need the following auxiliary lemma.

\begin{lemma}\label{lem:comparezerodiff}
    Suppose $\varphi_r:E_r^{s,t}\to \tilde{E}_r^{s,t}$ is a map of spectral sequences such that $\varphi_r$ is onto for $s\geq M_0$ and an isomorphism for $s\geq M_1 \geq M_0$. Then $\varphi_{r+1}$ is onto for $s\geq \max(M_0, M_1-r)$ and an isomorphism for $s\geq \max(M_1,M_0+r)$.
\end{lemma}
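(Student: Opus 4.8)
The plan is to carry out the standard four-lemma-style diagram chase comparing the two spectral sequences one differential at a time, carefully tracking which hypothesis on $\varphi_r$ (surjectivity versus isomorphism) is needed at which bidegree. Recall that we use the cohomological convention in which the $r$-th differential raises filtration degree, $d_r \colon E_r^{s,t} \to E_r^{s+r,t+r-1}$, so that
\[
E_{r+1}^{s,t} = \frac{\ker\bigl(d_r \colon E_r^{s,t} \to E_r^{s+r,t+r-1}\bigr)}{\mathrm{im}\bigl(d_r \colon E_r^{s-r,t-r+1} \to E_r^{s,t}\bigr)},
\]
and likewise for $\tilde{E}$. Since $\varphi_r$ is a map of spectral sequences it commutes with the $d_r$'s, and $\varphi_{r+1}$ is the induced map on subquotients. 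The whole argument reduces to chasing the commutative ladder whose rows are the three-term sequences $E_r^{s-r,t-r+1} \to E_r^{s,t} \to E_r^{s+r,t+r-1}$ and its tilde counterpart, with vertical maps $\varphi_r$.

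For surjectivity of $\varphi_{r+1}$ at $(s,t)$ I would start from a $d_r$-cycle $\tilde{x} \in \tilde{E}_r^{s,t}$. When $s \geq M_0$, surjectivity of $\varphi_r$ at $(s,t)$ produces a lift $x \in E_r^{s,t}$ with $\varphi_r(x) = \tilde{x}$; the only thing that can go wrong is that $x$ may fail to be a cycle. But $\varphi_r(d_r x) = d_r(\tilde{x}) = 0$, so $d_r x$ lies in $\ker \varphi_r$ at bidegree $(s+r,t+r-1)$. If additionally $s + r \geq M_1$, then $\varphi_r$ is injective there, forcing $d_r x = 0$; thus $x$ is a cycle with $[x] \mapsto [\tilde{x}]$. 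This yields surjectivity for $s \geq \max(M_0, M_1 - r)$.

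For injectivity of $\varphi_{r+1}$ at $(s,t)$ I would take a cycle $x \in E_r^{s,t}$ with $\varphi_{r+1}[x] = 0$, which means $\varphi_r(x) = d_r \tilde{y}$ for some $\tilde{y} \in \tilde{E}_r^{s-r,t-r+1}$. When $s - r \geq M_0$, surjectivity of $\varphi_r$ at $(s-r,t-r+1)$ lets me lift $\tilde{y}$ to $y \in E_r^{s-r,t-r+1}$, and then $x - d_r y$ is annihilated by $\varphi_r$ at $(s,t)$. If moreover $s \geq M_1$, injectivity of $\varphi_r$ there gives $x = d_r y$, so $[x] = 0$. This yields injectivity for $s \geq \max(M_1, M_0 + r)$, completing the proof. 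I expect no serious obstacle here; the only point requiring care is the asymmetric bookkeeping: lifting a cycle to a cycle costs injectivity one notch \emph{up} in $s$ (producing the $M_1 - r$), while correcting a would-be boundary costs surjectivity one notch \emph{down} in $s$ (producing the $M_0 + r$), and these two shifts are precisely what generate the stated ranges.
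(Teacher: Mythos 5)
Your proof is correct and is in substance the same argument as the paper's: the paper runs the identical bookkeeping through the snake lemma applied to the short exact sequences $0 \to Z_r \to E_r \to B_r \to 0$ and $0 \to B_r \to Z_r \to E_{r+1} \to 0$, whereas you unpack it as a direct element chase, with the same two trade-offs (surjectivity at $(s,t)$ plus injectivity at $s+r$ to lift a cycle to a cycle; surjectivity at $s-r$ plus injectivity at $(s,t)$ to kill a would-be boundary) producing the same ranges. The only cosmetic omission is that for the isomorphism claim you should note surjectivity also holds for $s \geq \max(M_1, M_0+r)$, which is immediate since $\max(M_1, M_0+r) \geq \max(M_0, M_1-r)$.
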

\begin{proof}
    We start by noting that our conditions imply that the map on boundaries $\varphi_r^B: B_r^{s,t} \to \tilde{B}_r^{s,t} $ and the map on cycles $\varphi_r^Z: Z_r^{s,t}\to \tilde{Z}_r^{s,t}$ are injective for $s\geq M_1$.
    
    We apply the snake lemma to the diagram
    \[\xymatrix{
    0 \ar[r] & Z_r^{s,*} \ar[r]\ar[d]_{\varphi^Z_r} & E_r^{s,*} \ar[r]\ar[d]^{\varphi_r}  & B_r^{s+r,*} \ar[r]\ar[d]^{\varphi^B_{r}} & 0 \\
    0 \ar[r] & \tilde{Z}_r^{s,*} \ar[r] & \tilde{E}_r^{s,*} \ar[r]  & \tilde{B}_r^{s+r,*} \ar[r] & 0
    }\]
    to conclude that $\varphi_{r}^B$ (in degree $s$) is onto for $s\geq M_0+r$. Furthermore, $\varphi^Z_r$ is onto provided that $\varphi_r$ is onto and $\varphi^B_r$ is injective, which is satisfied when $s\geq \max (M_0, M_1-r)$.
    
    Now consider the diagram 
    \[\xymatrix{
    0 \ar[r] & B_r^{s,*} \ar[r]\ar[d]_{\varphi^B_r} & Z_r^{s,*} \ar[r]\ar[d]^{\varphi^Z_r}  & E_{r+1}^{s,*} \ar[r]\ar[d]^{\varphi_{r+1}} & 0 \\
    0 \ar[r] & \tilde{B}_r^{s,*} \ar[r] & \tilde{Z}_r^{s,*} \ar[r]  & \tilde{E}_{r+1}^{s,*} \ar[r] & 0.
    }\]
    Another application of the snake lemma yields that $\varphi_{r+1}$ is onto whenever $\varphi_r^Z$ is, which we saw is satisfied when $s\geq \max (M_0, M_1-r)$. For $\varphi_{r+1}$ to be injective, it suffices that $\varphi_r^Z $ is injective (so $s\geq M_1$) and $\varphi_r^B$  is onto (so $s\geq M_0+r$).
\end{proof}

Combining this result with the computation of differentials in the $\beta$-inverted homotopy fixed point spectral sequence in \Cref{thm: tate splits}, we are now able to prove the following result about an explicit vanishing line of the homotopy fixed point spectral sequence. While we will not need this result in the analysis of the $\Kn$-local Picard groups, we record it here as it is of independent interest.

\begin{theorem}\label{cor: vanishing line}
  Let $n=p-1$ for the prime $p\geq 3$, and let $G$ be $N$ or $\GG$. There is a horizontal vanishing line $s=2n^2+\mathrm{vcd}(G)+1$ on the $E_{2n^2+2}$-page of the homotopy fixed points spectral sequence 
   \[E^{s,t}_2(G,\E)= H^{s}(G,\E_t)\Rightarrow\pi_{t-s}(\E^{hG}).\]
   In other words, $E_r^{s,t}(G,\E)=0$ for $s\geq 2n^2+\mathrm{vcd}(G)+1$,  all $t$, and $r\geq 2n^2+2$.
\end{theorem}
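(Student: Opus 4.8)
The plan is to play the homotopy fixed point spectral sequence $E_*^{*,*}(G,\E)$ against its $\beta$-inverted counterpart through the comparison map $\varphi$ of \eqref{eq:diagram-varphi}, exploiting that the target is completely understood: by \Cref{cor: tate splits} the $\beta$-inverted homotopy fixed point spectral sequence collapses to zero on the $E_{2n^2+2}$-page, so $\beta^{-1}E_{2n^2+2}^{s,t}(G,\E)=0$ for all $(s,t)$. The guiding idea is that $\varphi$ is an isomorphism in a range that recedes with the page in a controlled way, so injectivity of $\varphi$ at the collapse page $2n^2+2$ will force $E_{2n^2+2}^{s,t}(G,\E)$ to vanish once $s$ is large enough.

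First I would record the base case. By \Cref{prop: invert beta in group cohom}, the map $\varphi$ on $E_2$-pages is onto for $s\geq \mathrm{vcd}(G)$ and an isomorphism for $s>\mathrm{vcd}(G)$; writing $d=\mathrm{vcd}(G)$, this is exactly the hypothesis of \Cref{lem:comparezerodiff} with $M_0=d$ and $M_1=d+1$. Next I would iterate \Cref{lem:comparezerodiff} across the pages. Writing $a_r$ for the onto-range bound and $b_r$ for the isomorphism-range bound on the $E_r$-page, we have $a_2=d$ and $b_2=d+1$, and the recursions $a_{r+1}=\max(a_r,b_r-r)$ and $b_{r+1}=\max(b_r,a_r+r)$ from the lemma give, by a short induction, that $a_r=d$ and $b_r=d+r-1$ for all $r\geq 2$. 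In particular $\varphi_r$ is an isomorphism for $s\geq d+r-1$.

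Finally I would specialize to the collapse page $r=2n^2+2$. There $\varphi_{2n^2+2}$ is an isomorphism, hence in particular injective, for $s\geq d+(2n^2+2)-1=2n^2+\mathrm{vcd}(G)+1$, while the target $\beta^{-1}E_{2n^2+2}^{s,t}(G,\E)$ is identically zero by \Cref{cor: tate splits}. Therefore $E_{2n^2+2}^{s,t}(G,\E)=0$ whenever $s\geq 2n^2+\mathrm{vcd}(G)+1$, which is the asserted horizontal vanishing line. Since each later page $E_r^{s,t}(G,\E)$ with $r> 2n^2+2$ is a subquotient of $E_{2n^2+2}^{s,t}(G,\E)$, the vanishing persists for all $r\geq 2n^2+2$.

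The only genuine bookkeeping is the propagation of the comparison range through \Cref{lem:comparezerodiff}, and the step I expect to need the most care is confirming that the isomorphism bound grows exactly linearly as $b_r=\mathrm{vcd}(G)+r-1$, so that at the page $2n^2+2$ where the Tate spectral sequence has already collapsed, the resulting range $s\geq 2n^2+\mathrm{vcd}(G)+1$ matches the claimed vanishing line on the nose. Beyond this calibration I anticipate no real obstacle, since the collapse of the $\beta$-inverted spectral sequence for $G$ itself (and not merely for $F$) is already supplied by \Cref{cor: tate splits}.
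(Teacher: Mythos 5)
Your proposal is correct and follows essentially the same route as the paper: the paper's proof likewise starts from \Cref{prop: invert beta in group cohom}, inducts on the page using \Cref{lem:comparezerodiff} to show $\varphi_r$ is onto for $s\geq \mathrm{vcd}(G)$ and an isomorphism for $s\geq \mathrm{vcd}(G)+r-1$, and then invokes the collapse of the $\beta$-inverted spectral sequence from \Cref{thm: tate splits}. Your explicit bookkeeping of the recursion $a_{r+1}=\max(a_r,b_r-r)$, $b_{r+1}=\max(b_r,a_r+r)$ just spells out the induction the paper leaves implicit.
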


\begin{remark}
    In particular, in the case $G=\GG$, it follows from \Cref{thm: EASS} that the $\Kn$-local Adams-Novikov spectral sequence for the sphere has a horizontal vanishing line $s=3n^2+1$ at the $E_{2n^2+2}$-page.
\end{remark}

\begin{proof}
For brevity, let $E_r^{s,t} = E_r^{s,t}(G,\E)$ and $\tilde{E}_r^{s,t} = \beta^{-1}E_r^{s,t}(G,\E)$. We have a map of spectral sequences $\varphi_r: E_r^{s,t}\to\tilde{E}_r^{s,t}$, and we know that $\varphi_2$ is onto for $s\geq d = \mathrm{vcd}(G)$ and an isomorphism for $s> d$. It follows from an induction on $r$ using \Cref{lem:comparezerodiff} that $\varphi_r$ is onto for $s\geq d$ and an isomorphism for $s\geq d+r-1$. Since $\tilde{E}_r^{s,*}$ is zero when $r\geq 2n^2+2$ according to \Cref{thm: tate splits}, we conclude that $E_r^{s,*}$ is zero when $r\geq 2n^2+2$ and $s\geq d + 2n^2+1$ as claimed.
\end{proof}

Now we turn to a finer analysis of the classes on the vertical line $t-s=-1$ in the homotopy fixed point spectral sequence $E_*^{*,*}(G,\E)$. These groups provide an upper bound for the filtration quotients of the exotic Picard group of $\Kn$-local $\E^{hG}$-modules by \Cref{sec:descentfiltrations} below.

First we record some elementary facts that account for the supply of classes in $\tH^{t+1}(G,\E_{t})$ above the virtual cohomological dimension. For the rest of this section, we let $p\geq 5$, so that $n^2>2n+1$.
\begin{proposition} \label{prop: degconsideration in tate}
    Suppose that $G $ is $N$ or $\GG$, and $p\geq 5$.
\begin{enumerate}
    \item 
 For $s>vcd(G)$, the groups $H^s(G,\E_{t})\cong \tH^s(G,\E_{t})$ are zero unless $t=2n\epsilon+2pnl$ for some $\epsilon\in\{0,1\}$ and $l\in \mathbb Z$.
 
    \item In the $\beta$-inverted homotopy fixed points spectral sequence, let $x$ be a class in $\beta^{-1}E^{t+1,t}_2(G,\E) \cong \tH^{t+1}(G,\E_{t})$ with $n^2 \leq t\leq 4pn$. If $x$ survives to the $E_{2n+2}\cong E_{2n^2+1}$-page, it cannot be the target of a $d_{2n^2+1}$-differential.
    \end{enumerate}
\end{proposition}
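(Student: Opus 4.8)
The plan is to prove the two parts separately, both relying on the explicit description of the Farrell--Tate cohomology from \Cref{symonds} together with the differential structure recorded in \Cref{cor: tate splits} and \Cref{rem: all differentials}.

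\medskip

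For part (1), I would simply read off the degrees in which the Farrell--Tate cohomology is nonzero from the algebra structure in \eqref{eq:TateGN}. Every monomial in $\FF_p[\alpha,\beta^{\pm 1},\Delta^{\pm 1}]/(\alpha^2)\otimes \Lambda(a_0,\dots,a_{n-1})$ contributes an element whose internal degree $t$ is a $\ZZ$-linear combination $2n\epsilon + 2pn(\text{power of }\beta) + 2pn^2(\text{power of }\Delta) + \sum_i 2p^2ni(\text{exponent of }a_i)$, where $\epsilon\in\{0,1\}$ records the $\alpha$-exponent. First I would observe that every generator except $\alpha$ has internal degree divisible by $2pn$: indeed $|\beta|=2pn$, $|\Delta|=2pn^2=2pn\cdot n$, and $|a_i|=2p^2ni=2pn\cdot pi$ are all multiples of $2pn$, while $|\alpha|=2n$. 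Hence any class has $t\equiv 2n\epsilon \pmod{2pn}$, which is exactly the statement that $t=2n\epsilon+2pnl$ for some $\epsilon\in\{0,1\}$ and $l\in\ZZ$. By \Cref{prop: invert beta in group cohom} (or \Cref{thm: tate vs cohom}) the comparison $H^s(G,\E_t)\to\tH^s(G,\E_t)$ is an isomorphism for $s>\mathrm{vcd}(G)$, so the same vanishing holds for ordinary cohomology in that range.

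\medskip

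For part (2), I would argue that a $d_{2n^2+1}$-differential landing on a class $x$ in bidegree $(t+1,t)$ with $n^2\le t\le 4pn$ is impossible for degree reasons. By \Cref{cor: tate splits} the $\beta$-inverted spectral sequence splits as shifts of the Tate spectral sequence \eqref{sseq: tate for F} indexed by the exterior monomials, and by \Cref{rem: all differentials} the only nontrivial $d_{2n^2+1}$ has the form $d_{2n^2+1}(\alpha\beta^m\Delta^{n+pk})=\beta^{m+n^2+1}\Delta^{pk}$ (times an exterior monomial). So a target $x$ on the $E_{2n^2+1}$-page must, up to an exterior factor, be a nonzero multiple of $\beta^{m+n^2+1}\Delta^{pk}$; in particular $x$ lies in cohomological filtration $s=t+1=2(m+n^2+1)+(\text{contribution of the exterior monomial})$, which forces $s$ to be even on the nose once one accounts for $\alpha$-parity. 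The plan is to compare the cohomological degree $s=t+1$ against the internal degree $t$ using the bidegrees from \Cref{symonds}: writing out $t$ and $s$ for a putative target of such a differential and imposing the constraint $s=t+1$ together with the range $n^2\le t\le 4pn$ should leave no admissible solution. Concretely, $\beta^{m+n^2+1}\Delta^{pk}$ has $s=2(m+n^2+1)$ and $t=2pn(m+n^2+1)+2pn^2\cdot pk$; the equation $s=t+1$ then reads $2(m+n^2+1)=2pn(m+n^2+1)+2p^2n^2k+1$, whose left side is even and right side is odd, giving an immediate contradiction unless the exterior factor supplies an $a_i$ (shifting parity), and that case is excluded by the narrow internal-degree window $n^2\le t\le 4pn$, since the smallest nontrivial exterior shift $|a_1|=2p^2n$ already exceeds $4pn$ when $p\ge 5$.

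\medskip

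The main obstacle I anticipate is the bookkeeping in part (2): one must carefully enumerate \emph{all} bidegrees $(t+1,t)$ arising from the splitting in \Cref{cor: tate splits} that could simultaneously (i) survive to the $E_{2n^2+1}$-page and (ii) be hit by the unique family of top differentials, and then check that the constraint $s=t+1$ together with $n^2\le t\le4pn$ eliminates every case. The inequality $n^2>2n+1$ guaranteed by $p\ge5$ is what separates the relevant classes in the $\alpha$-column from lower-filtration noise, and the upper bound $t\le 4pn$ is precisely what prevents the exterior generators $a_i$ (whose degrees are multiples of $2p^2n$) from entering the window and spoiling the parity argument. I would therefore organize the proof around a parity/modular argument modulo small factors, reducing the infinitely many potential targets to a finite explicit check within the stated range.
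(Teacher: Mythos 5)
Your part (1) is exactly the paper's argument: read the internal degrees off \Cref{symonds}, note that $\beta$, $\Delta$, and all the $a_i$ have internal degree divisible by $2pn$ while $\alpha$ contributes $2n$, and transfer the conclusion to ordinary cohomology above the virtual cohomological dimension via \Cref{prop: invert beta in group cohom}. Part (2) also follows the paper's overall strategy (reduce, via \Cref{cor: tate splits} and \Cref{rem: all differentials}, to a degree count on monomials $\beta^{m}\Delta^{pk}a_0^{\epsilon_0}\cdots a_{n-1}^{\epsilon_{n-1}}$), but the step where you dispose of the exterior factors has a genuine gap. Your parity computation correctly shows that the constraint $s=t+1$ forces $\sum_i\epsilon_i$ to be odd; the problem is the claim that a nonzero exterior factor is excluded by the window $n^2\le t\le 4pn$. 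That claim fails twice over. First, $a_0$ has internal degree $2p^2n\cdot 0=0$, so it flips the parity of $s$ without moving $t$ at all, and a class such as $\beta^{m}\Delta^{pk}a_0$ passes straight through your argument. Second, even for $i\ge 1$, the exponents of $\beta$ and $\Delta$ range over all of $\ZZ$, so the contribution $2p^2ni>4pn$ of $a_i$ to the internal degree can be cancelled by negative powers of $\beta$ or $\Delta$; the size of $|a_i|$ alone does not push $t$ out of the window. What you have actually proved is only that a putative target must carry an odd number of exterior generators, not that no such target exists.

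The missing ingredient is the arithmetic the paper uses to kill precisely this remaining case. Since part (1) together with the window forces $t=2pn$, the two degree constraints become $2m+\sum_i\epsilon_i=2pn+1$ and $m+pnk+p\sum_i i\epsilon_i=1$. The second gives $m\equiv 1\pmod p$; writing $m=ph+1$ and substituting into the first yields $\sum_i\epsilon_i=2p(n-h)-1$, which is either negative or at least $2p-1>n$, contradicting $0\le\sum_i\epsilon_i\le n=p-1$. Some argument of this kind --- divisibility modulo $p$ combined with the bound on the number of exterior generators --- or an equivalent explicit finite check covering $a_0$ and the $\beta,\Delta$-compensated monomials, is needed to close your proof; the parity observation alone does not suffice.
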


\begin{proof} 
    (1) This follows from \Cref{symonds} and degree considerations. Note that we are not claiming much here: an arbitrary element in $\tH^*(G,\E_*)$ has the form $\alpha^\epsilon \beta^m \Delta^{k} a_0^{\epsilon_0}\dots a_{n-1}^{\epsilon_{n-1}}$ and has topological degree $t = 2n\epsilon + 2pnl$, where $l =m+nk+p\sum_{i=0}^{n-1} i\epsilon_i$.

    (2) If $n^2\leq t =2n\epsilon+2pnl\leq 4pn$, then $l=1$ and $\epsilon=0$ or 1. 
    If $\epsilon=1$, then $x$ has the form $\alpha\beta^m\Delta^{k}a_0^{\epsilon_0}\cdots a_{n-1}^{\epsilon_{n-1}}$, which can not be the target of a $d_{2n^2+1}$-differential by \Cref{rem: all differentials} and \Cref{cor: tate splits}.

    If $\epsilon=0$,  then again by \Cref{rem: all differentials}, $x$ has the form $\beta^m\Delta^{kp}a_0^{\epsilon_0}\cdots a_{n-1}^{\epsilon_{n-1}}$, where the variables $m, k\in\mathbb{Z},$ and $\epsilon_i\in\{0,1\}$ for $i=0,\ldots,n-1$ satisfy
\begin{equation}\label{eq: sdegree}
    2m+\sum_{i=0}^{n-1}  \epsilon_i=2pn+1, \text{ and }
\end{equation}
\begin{equation}\label{eq: tdegree}
    m+pnk+p\sum_{i=0}^{n-1} i\epsilon_i = 1.
\end{equation}
From \eqref{eq: tdegree}, we deduce that $m-1$ is divisible by $p$. Setting $m-1=ph$ with $h\in\mathbb{Z}$ and plugging into \eqref{eq: sdegree}, we obtain 
that $\sum_{i=0}^{n-1} \epsilon_i$ equals $2p(n-h)-1$. But each $\epsilon_i$ is either 0 or 1, implying that $0\leq 2p(n-h)-1\leq n  = p-1$, which is impossible.
\end{proof}


\section{\texorpdfstring{$\Kn$}{K(n)}-local Picard groups}\label{sec:Pic} 

In this section we let $n\geq 1$ be an arbitrary height again, and introduce the objects of main interest in this paper, namely the various $\Kn$-local Picard groups. Recall that in wide generality, the Picard group of a (small enough) symmetric monoidal category $\mathcal{C}$ is the group of isomorphism classes of invertible objects in $\mathcal{C}$, equipped with the monoidal tensor operation. In chromatic homotopy theory, the Picard group of the $\Kn$-local stable homotopy category is usually denoted $\Pic_n$, and often called the Hopkins' Picard group honoring the fact that Mike Hopkins first observed how rich its structure can be.

One of the original tools for studying $\Pic_n$ is the fundamental exact sequence \cite{hopkins-mahowald-sadovsky}
\begin{equation}\label{eq:kappadef-orig}
    0 \to \kappa_n \to \Pic_n \xrightarrow{\varepsilon} \Picnalg \cong H^1(\G, \E_*^{\times}),
\end{equation}
determined by the map $\varepsilon$ that sends an invertible $\Kn$-local spectrum $X$ to its ($\Kn$-local) $\E$-homology, which is a graded invertible $\E_*$-module with a compatible $\G$-action. Since $\E_0$ is a complete local ring and $\E_* = \E_0[u^{\pm 1}]$, invertible $\E_*$-modules (without the $\G$-action) are determined by whether they are concentrated in even or odd degrees. In other words, the Picard group $\Pic (\E_*)$ is $\ZZ/2$. Denoting by $\Pic_n^0$ the kernel of the natural map $\Pic_n \to \Pic(\E_*)\cong \ZZ/2$, the sequence \eqref{eq:kappadef-orig} is refined to the following form
\begin{equation}\label{eq:kappadef}
    0 \to \kappa_n \to \Pic_n^0 \xrightarrow{\varepsilon} H^1(\G, (\E_0)^\times),
\end{equation}
which is often easier to work with since $ \E_0$ is not a graded ring.

The isomorphism $ \Picnalg \cong H^1(\G, (\E_*)^\times)$  \cite[Proposition 8.4]{hopkins-mahowald-sadovsky} gives cohomological description of this algebraic Picard group. While complete calculations of $\Picnalg$ are few and far between (for a computation of $\Pic_2^{alg}$ at $p=3$ see \cite{Karamanov}, and \cite{Lader} for $p>3$), understanding $\Picnalg$ may be best suited as a problem in arithmetic geometry \cite{GrossHopkins}, and homotopy theorists tend to focus their attention on the complementary information contained in $\kappa_n$. This is what we will do here as well.

The group $\kappa_n$ is simply defined as the kernel of $\varepsilon$, and is called the exotic $\Kn$-local Picard group. Its elements are those invertible $\Kn$-local spectra $X$ whose $\E$-homology is $\G$-equivariantly isomorphic to $\E_*$, thus they are exotic in the sense that they are not seen by the algebra of their $\E$-homology (i.e. by their Morava modules).

While our main interest is in the (exotic) Picard groups of the various $\Kn$-local categories of interest, for many purposes it is useful to think of them as the connected components of the respective Picard spaces, or $\pi_0$ of the Picard spectra. Given a presentable symmetric monoidal category $\mathcal{C}$, its Picard spectrum $\pic(\mathcal{C})$ is the connective spectrum obtained by delooping the $\infty$-groupoid of invertible objects of $\mathcal{C}$. See, for example, \cite{ms} or \cite{GepnerLawson} for more details.

As is usual, if $R$ is a commutative ring spectrum, we denote by $\Pic(R)$ and $\pic (R)$ the Picard group and spectrum of the category of $R$-modules. In fact, when $R$ is a $\Kn$-local ring spectrum, such as the $\Kn$-local sphere, the Lubin-Tate spectrum $\E$, or any homotopy fixed point spectrum $\E^{hG}$, \emph{we will denote by $\Pic(R)$ and $\pic(R)$ the Picard group and spectrum of the category of $\Kn$-local $R$-modules}.

\subsection{Descent for Picard groups}
The main appeal of studying $\Pic(\mathcal{C})$ as $\pi_0 \pic(\mathcal{C})$ is the amenability of the Picard spectrum to descent techniques. In \cite[Theorem 6.31]{GepnerLawson}, see also \cite[Section 3.3]{ms}, Picard spectrum descent was established for faithful finite Galois extensions. More recently, \cite{mor,lizhang} studied profinite Galois descent for Picard spectra in the $\Kn$-local setting. We will be mostly referencing \cite{mor}, as Mor's approach is more suitable for our intended applications.

Let $R$ be a ring spectrum. Then the homotopy groups of $\pic (R) $, the Picard spectrum of $R$-modules, are given by
\[\pi_t \pic (R) = \begin{cases}
    \Pic(R), \text{ for } t=0;\\
    \pi_0(R)^\times,  \text{ for } t=1;\\
    \pi_{t-1}  (R),  \text{ for } t>1.\\
\end{cases}\]
Suppose that $A\to B$ is a faithful $G$-Galois extension for a finite group $G$, which in particular implies that $A\simeq B^{hG}$. Then $\pic(A)$ is equivalent to the connective cover of $\pic(B)^{hG}$ \cite[Section 3.3]{ms}, and there is an associated homotopy fixed point spectral sequence, also called the Picard spectral sequence
\begin{equation}\label{sseq: picard}
    E_2^{s,t}(G,\pic (B)) = H^s(G, \pi_t \pic (B))  \Rightarrow \pi_{t-s}\pic(B)^{hG}. 
\end{equation} 
Restricting to $t-s\geq 0$, this spectral sequence computes $\pi_*(\pic(A))$, which yields $\Pic(A) \cong \pi_0(\pic(A))$. In particular, this gives a natural filtration of $\Pic(A) \cong \pi_0 \pic(B)^{hG}$, whose filtration quotients are $E_\infty^{s,s}(G,\pic (B))$ for $s\geq 0$. We will come back to this filtration below in \Cref{sec:descentfiltrations}.

 Furthermore, Mathew-Stojanoska obtained a general comparison tool (\cite[5.2.4]{ms}) to deduce differentials in the Picard spectral sequence \eqref{sseq: picard} (in a range) from those in the homotopy fixed points spectral sequence
 \begin{equation}\label{sseq: comparehfpss}
     E^{s,t}_2(G,B)= H^{s}(G,\pi_t(B))\Rightarrow\pi_{t-s}(B^{hG}).
 \end{equation}
The key observation is that there are equivalences
\begin{equation}\label{eq: truncate pic}
    \Sigma \tau_{[m,2m-1]}B\simeq \tau_{[m+1,2m]}\pic(B),
\end{equation}
for any $m\geq 2$ that are natural in the spectrum $B$. If $B$ is equipped with a $G$-action, then this equivalence is compatible with the $G$-action, so one can compare the differentials in \eqref{sseq: comparehfpss} with those in \eqref{sseq: picard} in a suitable range.

The quintessential Galois extension of chromatic homotopy theory, namely $S^0_{\Kn} \to \E$, has a profinite Galois group. The recent work \cite{mor} (see also \cite{lizhang}) establishes an analogous descent equivalence $\pic(S^0_{\Kn}) \simeq \tau_{\geq 0} \pic(\E)^{h\G}$, giving rise to an associated spectral sequence \eqref{sseq: picard} with $E_2$-page the continuous $\G$-cohomology of the homotopy groups of $\pic(\E)$. In fact, Mor's work also allows us to conclude that for any closed subgroup $G$ of $\G$, there is an equivalence 
\begin{equation}\label{eq:picishG}
    \pic(\E^{hG}) \simeq \tau_{\geq 0} \pic(\E)^{hG}.
\end{equation}
Thus we obtain an associated spectral sequence \eqref{sseq: picard}, whose $E_2$-page is the continuous cohomology of $G$; see also \cite[Corollary 3.3.14]{lizhang}. 
The natural equivalences \eqref{eq: truncate pic} then allow us to generalize Mathew-Stojanoska's comparison tool. 

\begin{theorem}[{\cite[Theorem A.IV]{mor}\cite[Theorem B, Corollary 3.3.14]{lizhang}}]\label{thm: comparison picard hfpss} Let $G$ be any closed subgroup of $\GG$, and consider the homotopy fixed point spectral sequences $E_r^{*,*}(G,\E)$ (as in \eqref{sseq: comparehfpss}) with differentials $d_{r,+}$, and $E_r^{*,*}(G,\pic (\E))$ (as in \eqref{sseq: picard}) with differentials $d_{r,\varspadesuit}$.

Let $x$ be an element in $E^{s,t}_2(G,\E)$, with $t\geq 2$, and let $x_\varspadesuit$ be the corresponding element in \[E_2^{s,t+1}(G,\pic\E) \cong E_2^{s,t}(G,\E).\] Given $2\leq r\leq t$, assume that $x$ survives to $E_r^{s,t}(G,\E)$, i.e. for all $q<r$, $d_{q,+}(x)=0$ and $x$ is not in the image of $d_{q,+}$. Then, 
$x_{\varspadesuit}$ survives to $E_{r}^{s,t+1}(G,\pic (\E))$ and $d_{r,\varspadesuit}(x_\varspadesuit) $  is identified with $d_{r,+}(x)$.
\end{theorem}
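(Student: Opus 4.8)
The plan is to reduce the statement to the key natural equivalence \eqref{eq: truncate pic} and then transport differentials along it. The theorem is essentially a ``stability'' statement: in the relevant range of internal degrees, the Picard spectrum $\pic(\E)$ and a shift of $\E$ itself become $G$-equivariantly equivalent after suitable truncation, so their descent (homotopy fixed point) spectral sequences agree in that range. The subtlety is that $\pic(\E)$ is \emph{not} equivalent to $\Sigma\E$ globally; the two only agree after truncating to a window $[m+1,2m]$, so one must argue that the first potentially nonzero differential on $x_\varspadesuit$ lands inside a window where \eqref{eq: truncate pic} applies.

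First I would set up the comparison precisely. Fix $x\in E_2^{s,t}(G,\E)$ with $t\geq 2$, so that $x_\varspadesuit$ lives in $E_2^{s,t+1}(G,\pic\E)\cong H^s(G,\pi_{t+1}\pic(\E))=H^s(G,\pi_t\E)$, using the identification $\pi_{t+1}\pic(\E)\cong\pi_t\E$ for $t\geq 1$ from the homotopy groups of $\pic(R)$ recorded before \eqref{sseq: picard}. The element $x$ lies in the Adams-Novikov tower for $\E^{hG}$; apply the natural equivalence $\Sigma\tau_{[m,2m-1]}\E\simeq\tau_{[m+1,2m]}\pic(\E)$ of \eqref{eq: truncate pic} with $m=t$. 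Since this equivalence is $G$-equivariant, it induces an isomorphism of the two descent spectral sequences restricted to the relevant band of internal degrees, and in particular carries $d_{r,+}$-differentials on classes in internal degree within $[m,2m-1]$ to $d_{r,\varspadesuit}$-differentials on the corresponding classes in internal degree within $[m+1,2m]$.

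The core of the argument is a degree-bookkeeping induction on the page $r$. A $d_r$-differential out of $E_r^{s,t}$ lands in $E_r^{s+r,t+r-1}$, so the target has internal degree $t+r-1$. For the identification via \eqref{eq: truncate pic} to apply to both source and target, I need both internal degrees $t$ and $t+r-1$ to sit in the window $[m,2m-1]$ with $m=t$; this forces $t+r-1\leq 2t-1$, i.e. $r\leq t$, which is exactly the hypothesis $2\leq r\leq t$. Under this constraint, an inductive application of the equivalence shows that if $x$ survives to $E_r^{s,t}(G,\E)$ (all earlier differentials in and out vanish), then $x_\varspadesuit$ survives to $E_r^{s,t+1}(G,\pic\E)$, and the $r$-th differentials are identified because the equivalence intertwines the truncation towers computing the two spectral sequences. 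The hypothesis that $x$ is not in the image of any $d_{q,+}$ for $q<r$ guarantees that $x_\varspadesuit$ is genuinely a permanent cycle up to page $r$ and not accidentally bounded by an off-window class.

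The main obstacle I expect is controlling the boundary of the stable range: one must verify that no differential shorter than $r$ into or out of $x$ (or $x_\varspadesuit$) can originate from internal degrees outside $[t,2t-1]$ in a way that spoils the identification, and that the identification \eqref{eq: truncate pic} is compatible with the \emph{filtration-raising} maps in the respective towers, not merely with the associated graded. This requires invoking the naturality of \eqref{eq: truncate pic} in $B$ carefully, applied to the stages of the Adams-Novikov tower for $\E^{hG}$, which is precisely where Mor's descent equivalence \eqref{eq:picishG} is needed so that $\pic(\E^{hG})\simeq\tau_{\geq0}\pic(\E)^{hG}$ and the Picard spectral sequence \eqref{sseq: picard} is actually the homotopy fixed point spectral sequence of the truncations. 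Once this compatibility is in hand, the comparison of $d_{r,+}(x)$ with $d_{r,\varspadesuit}(x_\varspadesuit)$ follows formally, and the result is essentially the profinite generalization of \cite[Theorem 5.2.4]{ms} made possible by \cite{mor}.
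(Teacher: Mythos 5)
The paper does not prove this statement itself---it is quoted from \cite{mor} and \cite{lizhang}, which generalize \cite[Comparison Tool 5.2.4]{ms}---but your reconstruction follows exactly the strategy the paper sketches around \eqref{eq: truncate pic}: transport differentials along the natural $G$-equivariant equivalence $\Sigma\tau_{[t,2t-1]}\E\simeq\tau_{[t+1,2t]}\pic(\E)$, with the window condition $t+r-1\leq 2t-1$ accounting for the hypothesis $r\leq t$. Your degree bookkeeping and your identification of where Mor's profinite descent \eqref{eq:picishG} is needed are both correct, so this is essentially the same argument as in the cited sources.
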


\subsection{The descent filtration on exotic Picard groups}\label{sec:descentfiltrations}
The Picard group of $\E^{hG}$ inherits a natural filtration $f_s\Pic(\E^{hG})$  from the descent spectral sequence, which in the case of $G=\G$ is closely related to \eqref{eq:kappadef-orig} and \eqref{eq:kappadef}, as well as the descent filtration on $\kappa_n$ from \cite[Construction 3.2]{ghmrpicard}, \cite[Section 3.3]{BBGHPS}, or \cite[Section 1.2, 1.3]{culver}. To be more precise, note that $\Pic(\E) \cong \Pic(\E_*) = \ZZ/2$, generated by the suspension shift. Thus, for any subgroup $G$ of $\G$, we have $H^0(G, \pi_0\pic(\E))\cong \ZZ/2$, and the bottom of the filtration is an exact sequence
\[ 0 \to f_1 \Pic(\E^{hG}) \to \Pic (\E^{hG}) \to E_{\infty}^{0,0}(G, \pic(\E))\cong E_{2}^{0,0}(G, \pic(\E))\cong\ZZ/2 \to 0,\]
where $f_1\Pic(\E^{hG})$ consists of those invertible $\E^{hG}$-modules $X$ for which $X \otimes_{\E^{hG}}\E$ is concentrated in even degrees. In particular, $f_1\Pic(\E^{h\G}) = \Pic_n^0$.

The next step in the filtration is the exact sequence
\[ 0 \to f_2\Pic(\E^{hG}) \to f_1 \Pic(\E^{hG}) \to E_\infty^{1,1}(G,\pic(\E))\subseteq H^1(G,\E_0^\times), \]
which is precisely \eqref{eq:kappadef} when $G=\G$. Thus, $\kappa_n = f_2 \Pic(\E^{h\G})$. In line with this example, we make the following definition.

\begin{definition}\label{def:filterKappaGk}
    Let $G$ be a closed subgroup of the Morava stabilizer group $\G$. The group $\kappaN{G}$ of exotic elements in the Picard group $\Pic(\E^{hG})$ is $f_2 \Pic(\E^{hG})$. Furthermore, the descent filtration on $\kappaN{G}$ is $\kappaNk{G}{s} = f_s \Pic(\E^{hG}) $ for $s\geq 2$.
\end{definition}    
    
\begin{remark}
    An equivalent description of $\kappaN{G}$ is as those $X\in \Pic(\E^{hG})$ such that $\pi_*(X \otimes_{\E^{hG}}\E)$ is $G$-equivariantly equivalent to $\E_*$. This group sits in an exact sequence 
    \begin{equation}\label{eq:kappaG}
        0 \to \kappaN{G}\to \Pic(\E^{hG})\to H^1(G, \E_*^\times).
    \end{equation}
\end{remark}  

    Note that by \Cref{def:filterKappaGk}, $\kappaNk{G}{s}$ comes with a map $\kappaNk{G}{s}\to E_s^{s,s}(G,\pic (\E))$, and we have a comparison of the target group with $E_s^{s,s-1}(G,\E)$ according to \Cref{thm: comparison picard hfpss}. In particular, the latter is a subquotient of $H^s(G,\E_{s-1})$. 
    
If the group $G$ contains the central roots of unity $\mu_{p-1}$, the sparsity result of \Cref{prop:sparse} implies a corresponding sparsity of the descent filtration on $\kappaN{G}$. We record it here for future reference. In this paper, we will use it in the proof of \Cref{thm: main corollary}.
\begin{lemma}\label{lem: filtration sparse}
    Assume the closed subgroup $G$ of $\G$ contains the central subgroup $\mu_{p-1} \in \G$. Then the associated graded of the descent filtration on $\kappaN{G}$ is concentrated in degrees congruent to $1$ modulo $2(p-1)$.
\end{lemma}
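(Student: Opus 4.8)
The plan is to identify each associated graded piece of the descent filtration with a subquotient of a single continuous cohomology group of $G$, and then to read off the claimed congruence directly from the sparsity established in \Cref{prop:sparse}.

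First I would recall from \Cref{def:filterKappaGk} and the discussion preceding it that, for $s\geq 2$, the degree-$s$ piece of the associated graded of the descent filtration on $\kappaN{G}$ is the quotient $\kappaNk{G}{s}/\kappaNk{G}{s+1}$, which is computed by the diagonal term $E_\infty^{s,s}(G,\pic(\E))$ of the Picard spectral sequence \eqref{sseq: picard}. Since $E_\infty^{s,s}$ is in particular a subquotient of the $E_2$-page, it suffices to understand $E_2^{s,s}(G,\pic(\E)) = H^s(G,\pi_s\pic(\E))$.

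The key input is the formula for the homotopy groups of the Picard spectrum: for $s>1$ we have $\pi_s\pic(\E) = \pi_{s-1}\E = \E_{s-1}$, so that $E_2^{s,s}(G,\pic(\E)) \cong H^s(G,\E_{s-1})$. This is exactly the degree shift $t\mapsto t+1$ recorded in the remark after \Cref{def:filterKappaGk} when comparing the Picard and homotopy fixed point spectral sequences. Now the hypothesis that $G$ contains $\mu_{p-1}$ lets me apply \Cref{prop:sparse} with coefficient degree $t=s-1$: the group $H^s(G,\E_{s-1})$ vanishes unless $s-1$ is divisible by $2(p-1)$, i.e. unless $s\equiv 1 \pmod{2(p-1)}$. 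Since the degree-$s$ graded piece is a subquotient of this group, it too must vanish outside this congruence class, which is precisely the assertion.

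The argument is a short bookkeeping exercise, so I do not anticipate any serious obstacle; the only point requiring care is correctly tracking the degree shift $\pi_s\pic(\E)\cong \E_{s-1}$, which is what produces the coefficient module $\E_{s-1}$ (and hence the residue $1$ rather than $0$) in the relevant cohomology group. Note in particular that the comparison \Cref{thm: comparison picard hfpss} is not strictly needed here: because $E_\infty^{s,s}$ is already a subquotient of $E_2^{s,s}$, the vanishing of the latter via \Cref{prop:sparse} forces the vanishing of the graded piece directly.
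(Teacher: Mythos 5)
Your argument is correct and is precisely the one the paper intends: the lemma is stated as an immediate consequence of \Cref{prop:sparse}, with the graded piece in filtration $s\geq 2$ being a subquotient of $E_2^{s,s}(G,\pic(\E))\cong H^s(G,\E_{s-1})$, which vanishes unless $s\equiv 1 \pmod{2(p-1)}$. The degree shift $\pi_s\pic(\E)\cong\E_{s-1}$ is the only point of care, and you have tracked it correctly.
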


\begin{remark}
    The relationship of the descent filtration quotients $\kappaNk{G}{s}/\kappaNk{G}{s+1}$ to $H^s(G,\E_{s-1})$ can be used to describe the filtration without reference to the Picard spectral sequence \eqref{sseq: picard}. Instead, one studies the differential pattern of the Adams-Novikov spectral sequence \eqref{hfpss} for a representative exotic invertible $\Kn$-local spectrum. For details on this approach, the reader is referred to \cite[Section 3.3]{BBGHPS}.
\end{remark}

\begin{example}
    From \cite[Theorem 7.1.2]{ms}, we conclude that at the prime $2$ we have $\kappa_1^{C_2} = \ZZ/2$. From the computation in the proof of \cite[Theorem 8.1.3]{ms}, we deduce that at the prime $3$,  $\kappa_2^{F} = \ZZ/3$, for a maximal finite subgroup of $F$ of $\G$. Further, the proof of \cite[Theorem 4.1]{HMS} shows that if $G$ is a finite group containing $C_p$, then $\kappa_{p-1}^{G} = \ZZ/p$.
    
    At the prime $2$, larger groups appear; for example, for a maximal finite subgroup $F=G_{48}$, we conclude that $\kappa_2^{F} = \ZZ/8$ from \cite[Theorem 8.2.2]{ms}. 
    The group $\kappa_2^{C_4}$ at $p=2$ has order $4$, according to the proof of \cite[Proposition 7.4]{BBHS}.
\end{example}

\begin{remark}
Note that while $\kappaN{G}$ is not unrelated to the subgroup filtration on $\kappa_n$ from \cite[Section 3.1]{BBGHPS}, it is different. In particular, if $G_1 \subseteq G_2$ are nested closed subgroups of $\G$, then we have a map $\kappaN{G_2} \to \kappaN{G_1}$ making the diagram
\begin{equation*}
    \xymatrix{
    0 \ar[r] & \kappaN{G_2} \ar[r]\ar[d] &  \Pic(\E^{hG_2}) \ar[r]\ar[d] &  H^1(G_2, \E_*^\times)\ar[d] \\
    0 \ar[r] & \kappaN{G_1} \ar[r] &  \Pic(\E^{hG_1}) \ar[r] &  H^1(G_1, \E_*^\times) 
    }
\end{equation*}
commute, in which the middle and right-most maps are the natural ones. However, $\kappaN{G_2}$ need not be a subgroup of $\kappaN{G_1}$, and in fact none of the vertical maps need be inclusions. For a closed sugroup $G$ of $\GG$, the group $\kappa_n(G)$ of \cite{BBGHPS} is closely related to the kernel of $\kappa_n=\kappaN{\GG} \to \kappaN{G}$. 
\end{remark}

\section{Bounding the descent filtration}\label{sec:boundFiltration}

We are finally ready to apply the tools we have developed above and deduce certain differentials in the Picard spectral sequence \eqref{sseq: picard} with $B=\E$, \textbf{at height $n=p-1$}, and where $G$ is $\GG$ or $N$. As a result, we obtain a bound on the descent filtration of $\kappaN{N}$ and $\kappa_n$, as well as an explicit bound on the size of $\kappaN{N}$.

Using \Cref{thm: comparison picard hfpss} will allow us to compare an appropriate range of the Picard spectral sequence with the homotopy fixed point spectral sequence for the $G$ action on $\E$. We have partial but crucial information about the latter in \Cref{prop: degconsideration in tate}.

\begin{theorem}\label{thm: hfpsstopicard}
Suppose that $p\geq 5$, and let $G$ be $\GG$ or $N$. Let $\xpic$ be a class of bidegree $(t+1,t+1)$ in the $E_2$ page of the homotopy fixed point spectral sequence  
\begin{equation}\label{eq:sspic}
    E_2^{*,*}(G,\pic(\E)) \Rightarrow \pi_{*}\pic(\E)^{hG}.    
\end{equation}
Assume that $t\geq 1$, and that $\xpic$ is a non-trivial permanent cycle. Then
\begin{enumerate}
    \item when $G=N$, $t$ equals $2n$, while
    \item when $G=\GG$, $t$ is less than $n^2$.
    \end{enumerate}
\end{theorem}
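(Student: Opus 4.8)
The plan is to transport $\xpic$ into the homotopy fixed point spectral sequence and play the ordinary spectral sequence off against its $\beta$-inverted (Farrell--Tate) companion. By \Cref{thm: comparison picard hfpss}, the class $\xpic$ of bidegree $(t+1,t+1)$ corresponds, for $t\geq 1$, to a class $x$ of bidegree $(t+1,t)$ in $E_2^{*,*}(G,\E)=H^{t+1}(G,\E_t)$ sitting on the line $t-s=-1$, and the differentials $d_{r,+}$ and $d_{r,\varspadesuit}$ agree for $2\leq r\leq t$. Consequently, if $\xpic$ is a nontrivial permanent cycle, then $x$ can support no differential on any page $r\leq t$: the only way $\xpic$ can survive is if every differential that necessarily kills $x$ occurs on a page $r>t$, outside the comparison range. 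The entire argument thus reduces to locating, for each admissible $t$, the first page on which $x$ is forced to support a differential, and comparing it with $t$.

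The forcing comes from the Farrell--Tate computation. Whenever $s=t+1>\mathrm{vcd}(G)$, \Cref{prop: degconsideration in tate}(1) constrains $t=2n\epsilon+2pnl$ and identifies $x$ with a nonzero class $\varphi(x)\in\tH^{t+1}(G,\E_t)$, whose only differentials are the Tate differentials $d_{2n+1}$ and $d_{2n^2+1}$ of \Cref{cor: tate splits} and \Cref{rem: all differentials}. The transfer principle is that the comparison map $\varphi$ of \eqref{eq:diagram-varphi} is a map of spectral sequences which is onto above $\mathrm{vcd}(G)$; hence if $\varphi(x)$ supports a Tate differential $d_{r_0}$, then naturality ($\varphi(d_{r_0,+}(x))=d_{r_0}(\varphi(x))\neq 0$) forces $d_{r_0,+}(x)\neq 0$ in the ordinary spectral sequence. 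If moreover $r_0\leq t$, then by the comparison above $d_{r_0,\varspadesuit}(\xpic)\neq 0$, contradicting that $\xpic$ is a permanent cycle. The role of \Cref{prop: degconsideration in tate}(2) is precisely to guarantee that a surviving class in the relevant range dies by \emph{supporting} $d_{2n^2+1}$ rather than receiving it, so that the obstruction is visible on the source $x$ itself and transfers cleanly.

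It then remains to run the degree bookkeeping in the two cases. For $G=\GG$, assume $t\geq n^2=\mathrm{vcd}(\GG)$. A direct computation from the constraint $t=2n\epsilon+2pnl$ of \Cref{prop: degconsideration in tate}(1) shows there is no class on the line $t-s=-1$ with $n^2\leq t\leq 2n^2$, so $\xpic$ vanishes there; while for $t>2n^2$ either no class exists or the page $2n^2+1$ lies in the comparison range $r\leq t$, and the transfer principle (via \Cref{prop: degconsideration in tate}(2), or the isomorphism range of $\varphi$ from \Cref{lem:comparezerodiff} once $t>4pn$) shows $x$ supports $d_{2n^2+1}$, so $\xpic$ dies. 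This forces $t<n^2$. For $G=N$, I would first invoke \Cref{lem: filtration sparse} (valid since $\mu_{p-1}\subset N$) to force $2n\mid t$, say $t=2nk$, and then refine via \Cref{prop: degconsideration in tate}(1) to $k\equiv 0,1\pmod p$. When $k=1$ the unique class is $\alpha\beta^n\Delta^{-1}$, whose Tate differentials $d_{2n+1}$ and $d_{2n^2+1}$ both occur on pages strictly greater than $t=2n$, hence outside the comparison range; this is exactly what permits $\xpic$ to be a nontrivial permanent cycle, giving $t=2n$. For $2\leq k\leq p-1$ there is no class, and for $k\geq p$ one has $t=2nk\geq 2np>2n^2$, so $d_{2n^2+1}$ again lies in the comparison range and the transfer principle kills $\xpic$. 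Thus $t=2n$ is the only possibility.

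The main obstacle I anticipate is the careful reconciliation of the three relevant ranges: the Picard comparison range $r\leq t$ of \Cref{thm: comparison picard hfpss}, the surjectivity/isomorphism range $s\geq\mathrm{vcd}(G)+r-1$ of $\varphi_r$ from \Cref{lem:comparezerodiff}, and the fixed pages $2n+1$ and $2n^2+1$ carrying the Tate differentials. The delicate regime is $t$ just above $\mathrm{vcd}(G)$, where $\varphi$ ceases to be an isomorphism on late pages and one must argue that the differential killing $\varphi(x)$ genuinely descends to a nonzero differential on the \emph{source} $x$, and not merely to a lower-filtration target that $\varphi$ may annihilate. This is where \Cref{prop: degconsideration in tate}(2), pinning the relevant classes down as sources rather than targets of $d_{2n^2+1}$, does the essential work; verifying that its hypotheses $n^2\leq t\leq 4pn$ cover every case not already handled by the surjectivity of $\varphi$ or by the outright absence of classes is the crux of the bookkeeping.
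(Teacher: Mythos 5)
Your overall architecture---transport $\xpic$ to its companion $x$ in the homotopy fixed point spectral sequence and play it against the $\beta$-inverted one---matches the paper's, and your degree bookkeeping for where classes can live on the line $t-s=-1$ is essentially right. But the central mechanism is inverted, and this is a genuine gap. You argue that $x$ must \emph{support} a Tate differential and that this contradicts $\xpic$ being a permanent cycle. Under your own hypotheses this can never happen: if $\xpic$ is a permanent cycle and $t\geq 2pn$, then \Cref{thm: comparison picard hfpss} forces $d_{r,+}(x)=0$ for all $r\leq t$, and since the $\beta$-inverted spectral sequence collapses at $E_{2n^2+2}$ with $2n^2+1\leq t$, naturality makes $\varphi(x)$ a \emph{permanent cycle} there. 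Because that spectral sequence converges to zero, $\varphi(x)$ must therefore be the \emph{target} of a $d_{2n+1}$ or a $d_{2n^2+1}$; it supports nothing. Your reading of \Cref{prop: degconsideration in tate}(2) as guaranteeing that the class ``dies by supporting $d_{2n^2+1}$ rather than receiving it'' is a misreading: by \Cref{rem: all differentials}, classes of the form $\beta^m\Delta^{pk}$ times an exterior monomial on the line $t-s=-1$ support no differential at all, and part (2) of that proposition exists to show that such pure targets of $d_{2n^2+1}$ can only occur when $t>4pn$.

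The correct contradiction is therefore not with the permanent-cycle property but with the \emph{non-triviality} of $\xpic$ at $E_\infty$: one uses the surjectivity of $\varphi_r$ above the virtual cohomological dimension (\Cref{lem:comparezerodiff}) to lift the incoming differential $d_{r_0}(\tilde z)=\varphi(x)$ to a differential $d_{r_0}(z)=x$ in the ordinary spectral sequence, and then applies \Cref{thm: comparison picard hfpss} to the \emph{source} $z$---which requires the topological degree of $z$ to be at least $r_0$---to conclude $d_{r_0,\varspadesuit}(\zpic)=\xpic$, so that $\xpic$ is killed. For $r_0=2n^2+1$ the source has topological degree $t-2n^2$, and the needed inequality $t-2n^2\geq 2n^2+1$ is not automatic for $t\geq 2pn$ (it fails at $t=2pn$ itself); this is exactly the case your argument cannot close and the one \Cref{prop: degconsideration in tate}(2) is designed to exclude by forcing $t>4pn$. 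A smaller slip: at $t=2n$ for $G=N$ the class $\alpha\beta^n\Delta^{-1}$ is not unique in that bidegree---there are further classes involving the exterior generators $a_i$---though this does not affect the statement being proved.
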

\begin{proof}
First of all, note that for $m\geq 1$, we have an isomorphism $E_2^{s,m+1}(G,\pic(\E))\cong E_2^{s,m}(G,\E)$, so for each $y \in E_2^{s,m}(G,\E)$, denote by $\ypic$ the corresponding class in $E_2^{s,m+1}(G,\pic(\E))$. Since $\xpic$ has a companion class in $x \in E_2^{t+1,t}(G,\E)$, by the sparseness result of \Cref{prop:sparse}, we conclude that if $\xpic$ is to be non-trivial, $t$ must be at least $2n$.

(1) Suppose $G=N$ and  $\xpic \in E^{t+1,t+1}_2(G, \pic(\E))$ is a non-zero permanent cycle in cohomological degree above $\mathrm{vcd}(N) = n$. By looking at its companion $x\in E_2^{t+1,t}(G,\E)$, we deduce that $t$ has form $ 2n\epsilon +  2pnl$ by part (1) of \Cref{prop: degconsideration in tate}. We need to show that if $t>2n$, then $\xpic$ will be in the image of a differential. The formula $t = 2n\epsilon +  2pnl$ implies that if $t>2n$, then $t\geq 2pn = 2n^2 + 2n$. 

By \Cref{thm: comparison picard hfpss}, and the assumption that $\xpic$ is a permanent cycle, we conclude that $d_r(x) = 0$ for $r < 2n^2+2n $. Now \Cref{lem:comparezerodiff} implies that the image $\varphi(x)$ of $x$ in the $\beta$-inverted homotopy fixed point spectral sequence is a $d_r$-cycle for $r<2n^2+2n$. In light of \Cref{thm: tate splits}, this means $\varphi(x)$ is a permanent cycle. So, $\varphi(x)$ must be in the image of a differential, i.e. there exists either $\tilde{y}\in \beta^{-1}E_{2n+1}^{t-2n, t-2n}(G,\E)$ such that $d_{2n+1}(\tilde{y}) = \varphi(x)$, or  $\tilde{z}\in \beta^{-1}E_{2n^2+1}^{t-2n^2, t-2n^2}(G,\E)$ such that $d_{2n^2+1}(\tilde{z}) = \varphi(x)$. In either case, the cohomological degree of the class hitting $\varphi(x)$ is at least $2n$, thus invoking \Cref{lem:comparezerodiff} again gives that $x$ is the target of either a $d_{2n+1}$ or a $d_{2n^2+1}$ differential.

Suppose $d_{2n+1}(y) = x$ for some $y \in E_{2n+1}^{t-2n,t-2n}(G,\E)$. Then its topological degree $t-2n$ is at least $2n^2$, so by \Cref{thm: comparison picard hfpss}, the companion class $\ypic$ hits $\xpic$.  
If $\xpic$ is not in the image of $d_{2n+1}$, we conclude that there exists $z \in E_{2n^2+1}^{t-2n^2,t-2n^2}(G,\E)$ such that $d_{2n^2+1}(z) = x$. Part (2) of \Cref{prop: degconsideration in tate} now implies that $t > 4n^2$. Invoking \Cref{thm: comparison picard hfpss}, since the topological degree $t-2n^2$ of $z$ is at least $2n^2+1$, we conclude that $d_{2n^2+1}(\zpic) = \xpic$.

Altogether, we have that if $\xpic$ is a non-trivial permanent cycle in $E_*^{t+1,t+1}(N,\pic(\E))$, and $t\geq 1$, then $t = 2n$.

(2) Now we turn to the case $G=\GG$, which we argue in a similar fashion. We only need to show that if $t>n^2 = \mathrm{vcd}(G)$, then a permanent cycle $\xpic \in E_2^{t+1,t+1}(G,\pic(\E))$ must be hit by a differential. The companion class in $E_2^{t+1,t}(G,\E)$ will have topological degree $t = 2n\epsilon+ 2pnl$ by part (1) of \Cref{prop: degconsideration in tate}, implying that $l\geq 1$, i.e. $t \geq 2n^2+2n$.

The rest of the argument proceeds exactly as in the case of $G=N$. We argue that $x$ is a permanent cycle, by comparison to the $\beta$-inverted homotopy fixed point spectral sequence. If $\varphi(x)$ is in the image of a $d_{2n+1}$-differential, we import this differential to the Picard spectral sequence using \Cref{lem:comparezerodiff} and \Cref{thm: comparison picard hfpss}. If not, then it is in the image of a  $d_{2n^2+1}$-differential, but in this case we conclude $t > 4n^2 $ by \Cref{prop: degconsideration in tate}, which in turn allows us to import this differential to the Picard spectral sequence by another application of \Cref{thm: comparison picard hfpss}.
\end{proof}

With this result in hand, we are ready to read off the implications for the $\Kn$-local exotic Picard group. Recall that $\kappaN{G}$ is the subgroup of $\Pic(\E^{hG})$ of filtration 2 (and above) in the spectral sequence \eqref{eq:sspic}, and its descent filtration is the one inherited from this spectral sequence; cf. \Cref{def:filterKappaGk}. Thus, part (1) of \Cref{thm: hfpsstopicard} implies that $\kappaN{N}$ in fact equals $\kappaNk{N}{2n+1}$ and further, that $\kappaNk{N}{2n+2} = 0$. 

When $G$ is the whole group $\GG$, part (2) of \Cref{thm: hfpsstopicard} gives that $\kappak{n^2+1}= \kappaNk{\GG}{n^2+1} = 0$. While this much less precise, the sparsity result from \Cref{lem: filtration sparse} implies that the filtration quotients of $\kappa_n$ are concentrated in degrees congruent to $1$ modulo $2n$. 

\begin{corollary}\label{thm: main corollary}
    Suppose $p\geq 5$ and $n=p-1$. Let $N$ be the normalizer of $C_p\subset\GG$.
    \begin{enumerate}
        \item The exotic Picard group of $\Kn$-local $\E^{hN}$-modules is a subquotient of $H^{2n+1}(N,\E_{2n})$. In particular, $\kappaN{N}$ is a finite group of simple $p$-torsion. 
        \item The descent filtration on the exotic Picard group $\kappa_n$ has length at most $n^2$, and its associated graded is concentrated in degrees congruent to $1$ modulo $2n$. More precisely,
        \[\gr_* \kappa_n \cong \bigoplus_{m=1}^{n/2-1} E_\infty^{2nm+1,2nm+1}(\GG,\pic(\E)),  \]
        and each $E_\infty^{2nm+1,2nm+1}(\GG,\pic(\E))$ is a subquotient of $H^{2nm+1}(\GG,\E_{2nm})$.
    \end{enumerate}
\end{corollary}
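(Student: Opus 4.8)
The plan is to derive both statements directly from \Cref{thm: hfpsstopicard} by translating its conclusions about surviving diagonal permanent cycles into statements about the descent filtration of \Cref{def:filterKappaGk}. Recall from \Cref{sec:descentfiltrations} that the associated graded of the descent filtration on $\Pic(\E^{hG})$ is given by the diagonal terms $E_\infty^{s,s}(G,\pic(\E))$ for $s\geq 0$, and that $\kappaN{G}$ is precisely the part of filtration $s\geq 2$. Consequently, the graded pieces of $\kappaN{G}$ are exactly the groups $E_\infty^{s,s}(G,\pic(\E))$ for $s\geq 2$, i.e.\ the non-trivial diagonal permanent cycles of bidegree $(s,s)=(t+1,t+1)$ with $t\geq 1$ that survive the Picard spectral sequence \eqref{eq:sspic}. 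The companion correspondence of \Cref{thm: comparison picard hfpss} identifies $E_2^{s,s}(G,\pic(\E))\cong E_2^{s,s-1}(G,\E)=H^s(G,\E_{s-1})$, so each such $E_\infty^{s,s}$ is a subquotient of $H^s(G,\E_{s-1})$.

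For part (1), take $G=N$. By \Cref{thm: hfpsstopicard}(1), the only non-trivial surviving diagonal permanent cycle with $t\geq 1$ sits in bidegree $(2n+1,2n+1)$. Hence $E_\infty^{s,s}(N,\pic(\E))=0$ for all $2\leq s\leq 2n$, so the descent filtration on $\kappaN{N}$ has a single jump: $\kappaN{N}=\kappaNk{N}{2n+1}$ and $\kappaNk{N}{2n+2}=0$. Therefore $\kappaN{N}\cong E_\infty^{2n+1,2n+1}(N,\pic(\E))$, which by the previous paragraph is a subquotient of $H^{2n+1}(N,\E_{2n})$. Since $2n+1>\mathrm{vcd}(N)=n$, \Cref{prop: invert beta in group cohom} identifies $H^{2n+1}(N,\E_{2n})$ with the bidegree $(2n+1,2n)$ summand of the Farrell--Tate cohomology $\tH^*(N,\E_*)$, which by \Cref{symonds} is a finite-dimensional $\FF_p$-vector space. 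Thus $\kappaN{N}$ is finite of simple $p$-torsion.

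For part (2), take $G=\GG$. \Cref{thm: hfpsstopicard}(2) shows every non-trivial diagonal permanent cycle of bidegree $(t+1,t+1)$ has $t<n^2$, so all filtration jumps of $\kappa_n$ occur at degrees $s=t+1\leq n^2$; in particular $\kappaNk{\GG}{n^2+1}=0$ and the filtration has length at most $n^2$. Since $\GG$ contains the central subgroup $\mu_{p-1}$, \Cref{lem: filtration sparse} forces the associated graded to be concentrated in degrees $s\equiv 1\pmod{2n}$. Intersecting the constraint $s\equiv 1\pmod{2n}$ with $2\leq s\leq n^2$ yields $s=2nm+1$ with $m\geq 1$, and the inequality $2nm+1\leq n^2$ together with $n=p-1$ being even gives $m\leq n/2-1$. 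This produces the decomposition $\gr_*\kappa_n\cong\bigoplus_{m=1}^{n/2-1}E_\infty^{2nm+1,2nm+1}(\GG,\pic(\E))$, and the companion correspondence again exhibits each summand as a subquotient of $H^{2nm+1}(\GG,\E_{2nm})$.

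Essentially all the real content lives in \Cref{thm: hfpsstopicard}, so the task here is chiefly careful bookkeeping. The step I expect to require the most care is matching the filtration degree $s$ in the Picard spectral sequence with the bidegree $(s,s-1)$ of its homotopy-fixed-point companion via \Cref{thm: comparison picard hfpss}, and then carrying out the modular degree count that pins down the range $1\leq m\leq n/2-1$ in part (2); one must also confirm the finiteness and $p$-elementary nature of the relevant cohomology, which for $N$ follows from the explicit Farrell--Tate computation of \Cref{symonds} in the range $s>\mathrm{vcd}(N)$.
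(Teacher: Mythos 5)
Your argument is correct and follows essentially the same route as the paper: both read off the descent filtration quotients of $\kappaN{G}$ as the diagonal $E_\infty^{s,s}$-terms of the Picard spectral sequence, apply \Cref{thm: hfpsstopicard} to locate the only possible nonzero contributions, use \Cref{lem: filtration sparse} for the congruence condition in part (2), and invoke the Farrell--Tate identification of $H^{2n+1}(N,\E_{2n})$ for finiteness and simple $p$-torsion. No gaps; the bookkeeping (including the bound $m\leq n/2-1$) matches the paper's.
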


\begin{example}
    When $p=5$, part (2) gives that $\kappa_n$ itself is concentrated in a single degree and is a subquotient of $H^{9}(\GG,\E_8)$.
\end{example}

\begin{remark}
Unlike $H^{2n+1}(\GG,\E_{2n})$, the cohomology group $H^{2n+1}(N,\E_{2n})$ is not particularly mysterious. A combinatorial description of an $\FF_p$-basis of $H^{2n-1}(N,\E_{2n})\cong \tH^{2n-1}(N,\E_{2n})$ can be obtained as follows. By \Cref{symonds}, a generator with internal degree $2n$ has to be of the form  
$x = \alpha \beta^m \Delta^k a_0^{\epsilon_0}\cdots a_{n-1}^{\epsilon_{n-1}} $, where $k\in\mathbb Z$, $\epsilon_i \in\{0,1\}$, and $m = -nk - p\sum_{i=0}^{n-1} i\epsilon_i $. 
Hence we want to find all tuples $(k,\epsilon_0,\ldots,\epsilon_{n-1})$ such that the cohomological degree of $x$ is
    \begin{equation}\label{eq: basis for 2p-1}
        1+2m+\sum_{i=1}^{n-1}\epsilon_i=2n+1. 
    \end{equation}
    It follows that there is an even number of $\epsilon_i$'s that are 1, say those indexed by $0\leq i_1<\cdots<i_{2l}\leq n-1$. Then \eqref{eq: basis for 2p-1} can be rewritten as
    
    \begin{equation}\label{eq: comboforbasis}
        n \big( 1 + k + \sum_{j=1}^{2l}i_j \big) = l - \sum_{j=1}^{2l}i_j. 
    \end{equation}
    To find the solutions of this equation, as $l$ ranges from $0$ to $n/2$, we choose all $2l$-tuples $0\leq i_1 < \cdots < i_{2l} \leq n-1$, such that $l - \sum_{j=1}^{2l}i_j$ is divisible by $n$. (When $l=0$, all the $i_j$'s are zero.) Then $k$ is uniquely determined. So, the dimension of $H^{2n-1}(N,\E_{2n})$ as an $\FF_p$-vector space is given by the number of such choices of $l$'s and corresponding $i_j$'s. Unfortunately this combinatorial problem does not appear to have an explicit solution.
    
    A search of the first few values in the On-line Encyclopedia of Integer Sequences yields a match of the dimension of $H^{2n+1}(N,\E_{2n})$ with the number of periodic sequences of period $n$ made of an even number of $0$s and an even number of $1$s \cite[Table III, column $C_n$]{combinatorics}.
\end{remark}

\begin{example}
    When $p=5$, we get that $H^{2n+1}(N,\E_{2n}) $ is 4-dimensional, at $p=7$, the dimension is 8, while at $p=11$, the dimension is 56. 
\end{example}

\begin{remark}
In an alternative approach, one could use obstruction theory on Henn's resolution
\[\E^{hN}\rightarrow X_0\rightarrow X_1\rightarrow\cdots\rightarrow X_n\]
of $\E^{hN}$ (\Cref{thm: Henntopres}) to bound the size of $\kappa_n^N$ by modifying the argument in \cite[4.4.1.(iii)]{heard2014thesis}. Consider the homomorphism $\kappa_n^N\rightarrow\kappa_n^F$ sending $Y$ to $Y\otimes_{\E^{hN}} \E^{hF}$. An upper bound for the kernel is given by the amount of obstructions to lifting a non-exotic $\E^{hF}$-module $Y\otimes_{\E^{hN}} \E^{hF}$ to a non-exotic $\E^{hN}$-module $Y$ via the spectral sequence associated to the resolution of $\E^{hN}$ tensored with $Y$. The obstructions live in $\pi_sX_{s+1}$ where $0\leq s\leq n-1$. Then a straightforward combinatorial argument shows that the size of obstructions is precisely $H^{2n+1}(N,\E_{2n})$, which recovers the upper bound of $\kappa_n^N$ given in \Cref{thm: main corollary}.
\end{remark}

\bibliographystyle{alpha}
\bibliography{bib}
\end{document}